\DeclareRobustCommand{\SkipTocEntry}[5]{}
\newcommand*{\ii}{\mathrm{i}}	
\newcommand{\x}{\langle x\rangle}
\newcommand{\csi}{\langle \xi \rangle}
\newcommand{\pdd}{\langle D \rangle}
\newcommand{\jap}{\langle \cdot\rangle}
\newcommand*{\caO}{\ensuremath{\mathcal{O}}}		
\newcommand*{\caL}{\ensuremath{\mathcal{L}}}
\newcommand{\gh}{\mathcal{G}^{\sigma}\mathcal{H}_{r,\rho}}
\newcommand{\R}{\mathbb{R}}
\newcommand{\N}{\mathbb{N}}
\newcommand{\Z}{\mathbb{Z}}
\newcommand{\T}{\mathbb{T}}
\newcommand{\cinf}{\mathscr{C}^{\infty}}
\newcommand{\Cc}{\mathscr{C}}
\newcommand{\Sc}{\mathscr{S}}
\newcommand{\Fc}{\mathscr{F}}
\newcommand{\C}{\mathbb{C}}
\newcommand{\Op}{\mathrm{Op}}
\newcommand{\ind}{\mathrm{ind}}
\def\Op{ {\operatorname{Op}} }
\newcommand*{\caS}{\ensuremath{\mathscr{S}}}		
\newcommand*{\scrL}{\ensuremath{\mathscr{L}}}	
\newcommand*{\cZ}{\ensuremath{\mathcal{Z}}}		
\newcommand{\nnorm}[1]{\|\hspace*{-1.3pt}|#1|\hspace*{-1.3pt}\|}
\newcommand{\bignnorm}[1]{\left\|\hspace*{-1.3pt}\left|#1\right|\hspace*{-1.3pt}\right\|}
\newcommand{\condA}{$\mathrm{(}\mathscr{A}\mathrm{)}$}
\newcommand{\condB}{$\mathrm{(}\mathscr{B}\mathrm{)}$}
\theoremstyle{definition}
\newtheorem{defn}{Definition}[section]
\theoremstyle{plain}
\newtheorem{thm}[defn]{Theorem}
\newtheorem{prop}[defn]{Proposition}
\newtheorem{lemma}[defn]{Lemma}
\newtheorem{cor}[defn]{Corollary}
\theoremstyle{remark}
\newtheorem{rem}[defn]{Remark}
\title[Global hypoellipticity and solvability in time-periodic weighted spaces]
       {Global hypoellipticity and solvability\\for a class of evolution operators\\in time-periodic weighted Sobolev spaces}
\author[F. Avila Silva]{Fernando de \'Avila Silva}
\address{Department of Mathematics  \\ Federal University of Paran\'a \\ Curitiba, CEP 81531-980, Caixa Postal 19081, Paran\'a, Brazil}
\email{fernando.avila@ufpr.br}
\author[M. Bonino]{Matteo Bonino}
\address{Dipartimento di Matematica ``G. Peano'' \\Universit\`a di Torino\\
	Via Carlo Alberto 10\\
	10123 Torino\\
	Italy}
\email{matteo.bonino@unito.it}
\author[S. Coriasco]{Sandro Coriasco}
\address{Dipartimento di Matematica ``G. Peano'' \\Universit\`a di Torino\\
	Via Carlo Alberto 10\\
	10123 Torino\\
	Italy}
\email{sandro.coriasco@unito.it}
\begin{document}
	
	\begin{abstract}
		We study the hypoellipticity and solvability properties of a class of time-periodic evolution operators, 
		with coefficients globally defined on $\R^d$ and growing polynomially with respect to 
		the space variable. To this aim, we introduce a class of time-periodic weighted Sobolev 
		spaces, whose elements are characterised in terms of suitable Fourier expansions,
		associated with elliptic operators. 
	\end{abstract}

	\keywords{Gevrey periodic functions; Weighted Sobolev spaces; Periodic equations; Global hypoellipticity; Fourier analysis}
	\subjclass[2010]{primary: 46F05, 35B10, 35B65, 35H10, 35S05}

	\maketitle

	\tableofcontents

	\section{Introduction}\label{sec:intro}
	\setcounter{equation}{0}

In this work we introduce a new class of \textit{Gevrey time-periodic weighted} Sobolev spaces and study 
hypoellipticity and solvability properties of a naturally associated class of evolution operators, with coefficients 
growing polynomially with respect the space variable.

In order to introduce our main results and techniques, let 
$\T^n \sim\R^n /(2\pi \Z)^n$ be the $n$-dimensional torus and fix the parameters  $\sigma > 1$ and $r,\rho \in \R$.  
Given a constant $C>0$, we first study smooth functions
$u: \T^n \to \Sc'=\Sc^\prime (\R^d)$ satisfying 
\begin{equation}\label{eq:tpsk}
	\sup _{\gamma \in \Z_+^n}\left\{ C^{-|\gamma|} (\gamma!)^{-\sigma} \sup _{t \in \T^n} \| \partial_t^\gamma u(t)\|_{H^{r,\rho}(\mathbb{R}^d)}\right\} < + \infty.
\end{equation}
In \eqref{eq:tpsk}, for arbitrary $r,\rho \in \R$, the Sobolev-Kato (or \textit{weighted Sobolev}) spaces are defined as
\begin{equation}\label{eq:skspace}
  	H^{r,\rho}=H^{r,\rho}(\R^d)= \{v \in \Sc^\prime(\R^{d}) \colon \|v\|_{r,\rho}=
	\|{\jap}^r\pdd^\rho v\|_{L^2}< \infty\},
\end{equation}
where $\pdd^\rho=\Op(\lambda_\rho)$ is the pseudo-differential operator with symbol $\lambda_{\rho}(\xi)=\csi^\rho$,
the power $\rho$ of the so-called \textit{japanese bracket} $\csi=\sqrt{1+|\xi|^2}$. In the sequel we will also employ the symbols
\[
	\lambda_{r,\rho}(x,\xi)=\lambda_r(x)\lambda_\rho(\xi)=\x^r\csi^\rho,\quad r,\rho\in\R.
\]
The space $H^{r,\rho}$ is equipped with the naturally induced norm $\|\cdot\|_{r,\rho}$. Endowed with such norm, $H^{r,\rho}$ is a Hilbert space. 
Moreover, $H^{r,\rho} \subseteq H^{\ell,\lambda}$, $\ell \leq r, \lambda \leq \rho$, with continuous embedding, compact when the order 
components inequalities are both strict. Recall that $\displaystyle\Sc'=\bigcup_{r,\rho\in\R}H^{r,\rho}$, respectively 
$\Sc=\displaystyle\bigcap_{r,\rho\in\R}H^{r,\rho}$, is the inductive,
respectively projective, limit of the Sobolev-Kato spaces.

A main aspect of our analysis consists in the characterisation of the spaces $H^{r,\rho}$ in terms of a
discretization approach, based on the Fourier expansions given by suitable (SG-)elliptic operators on $\R^d$ 
(see \cite{cordes} and Appendix \ref{sec:sgcalc} for details). Such operators are defined by means of symbols 
satisfying global estimates, namely
\begin{equation}
	\label{eq:disSG}
	|\partial_x^{\alpha} \partial_\xi^{\beta} a(x, \xi)| \leq C_{\alpha\beta} 
	\x^{m-|\alpha|}\csi^{\mu-|\beta|},
	\qquad (x, \xi) \in \R^d \times \R^d,
\end{equation}
for any $\alpha,\beta\in\Z^d$ and suitable $C_{\alpha\beta}>0$, that is, belonging to the classes $S^{m,\mu}=S^{m,\mu}(\R^d)$, 
$m,\mu\in\R$. Notice that, when $a\in S^{m,\mu}$, the corresponding operator $\Op(a)$ maps continuously 
$H^{r,\rho}$ to $H^{r-m,\rho-\mu}$, $r,\rho,m,\mu\in\R$ (see Appendix \ref{sec:sgcalc}). Ellipticity here means that there exists 
$R\ge0$ such that
\begin{equation}\label{eq:sgell}
	C\x^{m} \csi^{\mu}\le |a(x,\xi)|,\qquad 
	|x|+|\xi|\ge R, (x,\xi)\in\R^d\times\R^d,
\end{equation}
for some constant $C>0$. Our characterisation of the spaces $H^{r,\rho}$ is an extension to this setting of analogous 
results for the unweighted Sobolev spaces $H^\sigma$ and for the Sobolev spaces $Q^{\sigma}$ associated with 
the so-called Shubin calculus (see, e.g., \cite{GPR} and the references quoted therein).

Moreover, given an operator $P=\Op(p)$ on $\R^d$, associated with an elliptic symbol $p\in S^{m,\mu}(\R^d)$ and
satisfying suitable assumptions (see Section \ref{sec:eigenexp} below), we use its associated orthonormal basis of 
eigenfunctions $\{\phi_j\}_{j \in \N^\ast}$ to obtain 
\begin{equation}\label{eq:expansion-intro}
	u(t) = \sum_{j \in \N^\ast} u_j(t) \phi_j,
\end{equation}
for $u$ as in \eqref{eq:tpsk} and $u_j(t)$, $j \in \N^\ast, \N^\ast=\N \setminus \{0\}=\{1,2,3,\dots\}$, a suitable sequence of (Gevrey) periodic functions.

We apply our results on the spaces of functions satisfying \eqref{eq:tpsk} to the study of \textit{global hypoellipticity} and \textit{global solvability} for operators of the type 
\begin{equation}\label{eq:op-intro}
	L= D_t + \omega\Op(p) , \quad  D_t = -i\partial_t, t\in\T=\T^1, \omega \in \C, p\in S^{m,\mu}.
\end{equation}
Namely, expansions lead us to study the equation $Lu=f$ by means of a sequence of ordinary differential equations,
\begin{equation}\label{eq:odesintro}
	D_tu_j(t) + \lambda_j \omega u_j(t) = f_j(t), \ t \in \T, \ j \in \N^\ast,
\end{equation}
where $\{\lambda_j\}_{j \in \N^\ast}$ is the sequence of eigenvalues of $\Op(p)$. Hence, the regularity of 
the solution $u(t)$ is studied by analysing the behaviour of the sequence of the coefficients
$u_j(t)$ and their derivatives as $j\to \infty$, cf. Theorem \ref{thm:MainTheormHypo} below. In a similar fashion,
the solvability properties of $L$ are investigated as well, cf. Theorem \ref{thm:MainTheormSolv} below.

We emphasize that many authors explore characterisations of functional spaces in terms of Fourier expansions generated by elliptic operators. First, we mention Seeley's papers \cite{See65,See69}  in  the context of smooth and analytic functions on vector bundles. On the  Hilbert spaces and closed manifolds we may cite \cite{DR} by Delgado and Ruzhansky.  Similar ideas, in case of ultradifferentiable classes on compact manifolds and  Lie groups, are presented by Dasgupta and Ruzhanky in \cite{DaR}, by Kirilov, Moraes and  Ruzhanky in \cite{KMR2022,KMR2020}, see also the paper by Greenfield and Wallach \cite{GW73}. 
In the context of Gelfand-Shilov classes in the Euclidian spaces, a similar approach is introduced by Cappiello, Gramchev and Pilipovi\'c in \cite{GPR,CGPR}.

Furthermore, expansions like \eqref{eq:expansion-intro} on the product of manifolds (compact or not) have been recently explored and applied to studying certain periodic evolution equations. For instance,  \'Avila, Gramchev, and Kirilov consider in \cite{AGK18} an analysis for smooth functions and distributions on $\T \times M$, where $M$ is a smooth closed manifold, applied to investigations of the global hypoellipticity of such equations. In \cite{KMR2020,KMR2022}, the authors extend these ideas to the context of compact Lie groups and their products. The non-compact case $\T\times \R^d$ is investigated by \'Avila and Cappiello in \cite{AC,AC24} in the setting of  Gelfand-Shilov classes with applications to  hypoellipticity and solvability problems. Extensions of such theories and approaches to the setting $\T\times X$, where $X$ is a manifold with ends or, more generally, 
an asymptotically Euclidean manifold, are currently being investigated. They will be the subject of future papers, as well as the case of time-dependent operators of the form
$L=D_t+\omega(t)\Op(p)$, $p\in S^{m,\mu}$.

We recall that Hounie presented in \cite{Hou79} the study of global properties for the evolution  operator $\mathcal{L} = D_t + A$, where $A$ is a suitable operator, densely defined on a separable complex Hilbert space $\mathcal{H}$, which is unbounded, positive, and with eigenvalues diverging to $+\infty$. It was proved by Hounie that the structure of the spectrum of $A$ does not play a role in the regularity of solutions when $t$ is in some interval in $\R$, but it is instead crucial in the periodic case $t \in \T$.

This influence is connected with the so-called \textit{Diophantine approximations} and is closely related to Liouville numbers and generalizations. This is widely explored for studying global properties of operators on the torus, see, for instance, \cite{BER,DGY,GW72,Pet05,GPR} and the references quoted therein. Broadly speaking, this phenomenon appears due to the periodicity of equation \eqref{eq:odesintro} (see expressions \eqref{Solu-1-Constant} and \eqref{Solu-2-Constant} below). This connection is analysed in each one of the references cited above and is also important here, cf. Definition \ref{def:condA}.

The paper is organized as follows. In Section \ref{sec:eigenexp} we prove our first main results, namely,
the characterisation of the Sobolev-Kato spaces $H^{r,\rho}$ and of the Schwartz space $\Sc$ by means of Fourier expansions with
respect to the sequence $\{\phi_j\}_{j \in \N^\ast}$ 
of the eigenfunctions of an elliptic 
operator $\Op(p)$, $p\in S^{m,\mu}$, under suitable assumptions. In the subsequent Section \ref{sec:torussgexp}
we introduce and study Gevrey time-periodic Sobolev-Kato spaces on $\T^n\times\R^d$ and their duals, and prove our second set of main results,
characterising their elements again by means of Fourier expansions associated with the sequence $\{\phi_j\}_{j \in \N^\ast}$. Finally, in Sections \ref{sec:torussghypoell}	
and \ref{sec:torussgsolv}	
we prove our last main results, studying the hypoellipticity and solvability, respectively,
of operators of the form \eqref{eq:op-intro}, globally on $\T\times\R^d$. For the convenience of
the reader, we also include an Appendix, where we recall basic facts about 
the SG-calculus and the spectral properties of the elliptic SG-operators. In the sequel, the notation $A\asymp B$ means that $A\lesssim B$ and $B\lesssim A$,
where $A\lesssim B$ means that $A\le c\cdot B$, for a suitable constant $c>0$.

\addtocontents{toc}{\SkipTocEntry}
\section*{Acknowledgements}
Thanks are due to Proff. M. Cappiello, L. Neyt, L. Rodino, S. Pilipovi\'c, and J. Vindas, for useful discussions.
The first author gratefully acknowledges the support provided by the National Council for Scientific and Technological Development--CNPq, Brazil 
(grants 423458/2021-3, 402159/2022-5, 200295/2023-3 and 305630/2022-9). 
The first author also expresses gratitude for the hospitality extended to him during his visit to the 
Department of Mathematics ``G. Peano'', University of Turin, Italy, where part of this work was developed.
The second and third author have been partially supported by INdAM - GNAMPA Project 
CUP\_E53C22001930001 (Sc. Resp. S. Coriasco). The third author has been partially supported by 
the Italian Ministry of the University and Research - MUR, within the framework of the Call relating to the scrolling of the final rankings of the 
PRIN 2022 - Project Code 2022HCLAZ8, CUP D53C24003370006 (PI A. Palmieri, Local unit Sc. Resp. S. Coriasco).

\section{Eigenfunction expansions in weighted Sobolev spaces on $\R^d$}\label{sec:eigenexp}
	\setcounter{equation}{0}
	In this section we aim at characterising the Sobolev-Kato spaces by means of the eigenfunction expansions 
	related to a suitable elliptic, normal SG-operator $P$. This approach was considered by Seeley \cite{See69,See65} 
	in the context of smooth and analytic functions on vector bundles, and by Cappiello, Gramchev, Pilipovi\'c and Rodino in 
	\cite{GPR,CGPR} in Gelfand-Shilov classes on the Euclidean spaces. Here we precisely characterise smoothness
	and polynomial decay of temperate distributions on $\R^d$, in terms of the behaviour at infinity of the coefficients of 
	eigenfunction expansions, related to the corresponding behaviour of the eigenvalues of $P$. As a byproduct, we also
	similarly characterise when a temperate distribution is actually a rapidly decreasing Schwartz function.


	Let $P \in \mathrm{Op}(S^{m,\mu})$ be an elliptic, normal SG-operator with order components $m,\mu>0$
	(see Appendix \ref{sec:sgcalc} below for the notation and a summary of the theory of SG-operators).
	By \cite[Theorem 4, Ch. 3]{cordes}, we have that $\ker P = \ker P ^* \subset \Sc$. Moreover, the ellipticity of $P$ implies that
	it is Fredholm, and we have $\dim\ker P = N < \infty$ and $\ind\, P= \dim\ker P - \dim\ker P^*=0$.
	
	Let $\{\phi_j\}_{j=1}^N \subset \Sc$ be an orthonormal basis of $\ker P$ and 
	consider the operator $P_0$ with kernel
	$$
	K(x,y) = \sum_{j=1}^N \phi_j(x) \overline{\phi_j(y)} \in \Sc(\R^d\times\R^d),
	$$
	that is, the orthogonal projection on $\ker P$. We then get 
	$$
	P_0 u = \int K(x,y)u(y)dy = \sum_{j=1}^N  (u,\phi_j)\,\phi_j ,
	$$
	so that $P_0 \in \Op(S^{-\infty,-\infty})$ and it is a compact operator $P_0\colon L^2 \to L^2$.
	
	Notice that the operator $\widetilde{P} =P+P_0 \in \Op(S^{m,\mu})$ is elliptic, normal and injective with 
	$\ind\,\widetilde{P} = \ind\, P$. Therefore, $\widetilde P\colon H^{m,\mu} \to L^2$ is bijective with inverse 
	$Q \in \caL(L^2, H^{m,\mu})$. In particular, $Q\in\Op(S^{-m,-\mu})$ is normal and compact. Therefore, we have proved 
	that the next Proposition \ref{prop:eqnorws} holds true (see, e.g., \cite[Ch. 3]{NR}, (3.1.8), (3.1.9), for the last statement).

	\begin{prop}\label{prop:eqnorws}
		Let $P \in \mathrm{Op}(S^{m,\mu}(\R^d))$ be an elliptic, normal SG-operator with order components $m,\mu>0$. 
		Denote by $P_0$ the orthogonal projection on $\ker P$ and let $\widetilde P=P+P_0$. Then,
		$$
			u \in H^{m,\mu}(\R^d) \Longleftrightarrow P u \in L^2(\R^d)
		$$
		and
		$$
			\|u\|_{H^{m,\mu}(\R^d)} \doteq \|\Op(\lambda_{m,\mu})u \|_{L^2(\R^d)} \asymp \|\widetilde Pu\|_{L^2(\R^d)}.
		$$
	\end{prop}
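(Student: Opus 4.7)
The proof is essentially the assembly of the ingredients already laid out in the paragraphs immediately preceding the statement, so the plan is simply to organise them with care. The forward implication $u\in H^{m,\mu}\Rightarrow Pu\in L^2$ follows from the continuous mapping property $\Op(S^{m,\mu})\colon H^{m,\mu}\to L^2$ of the SG-calculus (see Appendix \ref{sec:sgcalc}), which simultaneously yields the bound $\|Pu\|_{L^2}\lesssim\|u\|_{H^{m,\mu}}$.

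For the reverse direction, let $u\in\Sc'$ with $Pu\in L^2$. The key point is that the finite-rank projection $P_0$ is regularizing: the identity $P_0u=\sum_{j=1}^N (u,\phi_j)\,\phi_j$ makes sense for $u\in\Sc'$ because each $\phi_j\in\Sc$, and the result is a finite linear combination of Schwartz functions, hence $P_0u\in\Sc\subset L^2$. Therefore $\widetilde Pu=Pu+P_0u\in L^2$, and invoking the bounded inverse $Q\in\caL(L^2,H^{m,\mu})$ of $\widetilde P$ from the preceding discussion gives $u=Q\widetilde Pu\in H^{m,\mu}$, together with $\|u\|_{H^{m,\mu}}\lesssim\|\widetilde Pu\|_{L^2}$.

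For the norm statement, the identity $\|u\|_{H^{m,\mu}}=\|\Op(\lambda_{m,\mu})u\|_{L^2}$ is simply the definition in \eqref{eq:skspace}, while the inequality $\|\widetilde Pu\|_{L^2}\lesssim\|u\|_{H^{m,\mu}}$ is the continuity $\widetilde P\colon H^{m,\mu}\to L^2$, completing the equivalence $\|u\|_{H^{m,\mu}}\asymp\|\widetilde Pu\|_{L^2}$. The only substantive obstacle is the reverse implication $Pu\in L^2\Rightarrow u\in H^{m,\mu}$, since a priori $Pu\in L^2$ carries no information on the regularity of $u$; the device of replacing $P$ by $\widetilde P$ absorbs this difficulty cleanly, correcting the non-injectivity of $P$ without altering the $L^2$-class of its image. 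An equivalent route, using an SG-parametrix $E\in\Op(S^{-m,-\mu})$ of $P$ and writing $u=EPu+Ru$ with $R\in\Op(S^{-\infty,-\infty})$, would also give $u\in H^{m,\mu}+\Sc=H^{m,\mu}$, but the $\widetilde P/Q$ formulation is preferable because the pair $(\widetilde P,Q)$ is exactly what will be needed for the eigenfunction expansions developed in the remainder of this section.
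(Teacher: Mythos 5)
Your argument is correct and follows the same route as the paper: the paper's ``proof'' consists precisely of the preceding discussion showing that $P_0$ is a smoothing finite-rank operator, that $\widetilde P=P+P_0$ is an elliptic, normal, injective SG-operator, and that $\widetilde P\colon H^{m,\mu}\to L^2$ is therefore bijective with inverse $Q\in\Op(S^{-m,-\mu})$, after which the statement is declared proved. The only step you pass over quickly is the equality $u=Q\widetilde P u$ for $u\in\Sc'$ with $\widetilde Pu\in L^2$, which strictly speaking requires the parametrix observation that $\ker\widetilde P$ in $\Sc'$ lies in $\Sc\subset H^{m,\mu}$, hence is trivial; this is exactly the alternative route you sketch at the end, and the paper is equally terse on this point, so the proposal matches the paper's argument and level of detail.
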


	Since $Q\colon L^2 \to L^2$ is a compact normal operator, 
	there exists a basis $\{\phi_j\}_{j \in\N^\ast}$ of orthonormal eigenfunctions, associated with eigenvalues $\{\mu_j\}_{j \in\N^\ast}$
	such that
	$$
	\mu_j \to 0,  \quad j \to \infty.
	$$
    	The injectivity of $Q$ ensures that $\mu_j \neq 0$, for all $j \in \N^\ast$. We claim that $\phi_j$ is still an eigenfunction 
	of $\widetilde P$, with eigenvalue $\widetilde{\lambda}_{j} = \mu_j^{-1}$. Indeed, 
    $$
    Q \phi_j = \mu_j \phi_j,
    $$
    and, since $\phi_j \in \mathrm{Im}\, Q =  H^{m,\mu} = \mathrm{dom}\, \widetilde P$, and $\widetilde{P }Q = \mathrm{Id}$ on $H^{m,\mu}$, we get
    $$
    \widetilde P \phi_j = \mu_j^{-1} \phi_j = \lambda_{j}\phi_j.
    $$
    We can of course choose an orthornormal basis $\{\phi_j\}_{j \in \N^\ast}$ which completes the basis $\{\phi_j\}_{j=1}^{N}$ of $\ker P$, so that,
    clearly, 
    $$
    Q \phi_j = \mu_{j} \phi_j \iff \mu_{j}^{-1} \phi_j = \widetilde P \phi_j = \phi_j\Rightarrow \widetilde \lambda_j=\mu_j^{-1}=1, \quad j=1, \ldots, N,
    $$
    and
    \[
      Q \phi_j = \mu_{j} \phi_j \iff \mu_{j}^{-1} \phi_j = \widetilde P \phi_j = P\phi_j\Rightarrow \widetilde \lambda_j=\mu_j^{-1}=\lambda_j, \quad j\ge N+1.  
    \]
    Then, the eigenvalues $\widetilde{\lambda}_j$ of $\widetilde{P}$ are given by
 \begin{equation}\label{eq:wtlambda}
 	\widetilde \lambda_j=
 		\begin{cases}
 			1, \quad j=1, \dots, N, \\
 			\lambda_j \quad j\ge N+1,
 		\end{cases}
 \end{equation}
 where $\{\lambda_j\}_{j\ge N+1}$ are the nonvanishing eigenvalues of $P$ and, of course, $\lambda_1=\lambda_2=\cdots=\lambda_N=0$.
 Finally, $\displaystyle\lim_{j\to\infty}\mu_j=0\Rightarrow\lim_{j\to\infty}|\lambda_j|=\infty$.

    \begin{defn}
    	Let $P \in \mathrm{Op}(S^{m,\mu}(\R^d))$ be an elliptic, normal SG-operator with order components $m,\mu>0$, 
	and denote by $\{\phi_j\}_{j \in \N^\ast}$ a basis of orthonormal eigenfunctions of $P$. 
	Given $f \in \Sc(\R^d)$ we set
	\[
    		f_j = (f,\phi_j)_{L^2(\R^d)},\quad j \in \N^\ast,
    	\]
    	which implies $f = \sum_{j \in \N^\ast} f_j \phi_j$. By duality, for $u \in  \Sc^\prime(\R^d)$ we set 
    	\begin{equation}\label{eq:uj}
    		u_j = u\!\left(\overline{\phi_j}\right)=\langle u , \overline{\phi_j}\rangle,
    	\end{equation}
    	which implies $u = \sum_{j \in \N^\ast} u_j \phi_j$.
	\end{defn}
	\begin{rem}
		 Indeed, since $\{\overline{\phi_j}\}_{j \in \N^\ast}$ is also a basis for $L^2$, for any $\psi\in\Sc(\R^d)$
		\[
    			\langle u , \psi \rangle \!=\! \left\langle u, \sum_{j \in \N^\ast}(\psi,\overline{\phi_j})\,\overline{\phi_j} \right\rangle 
			\!=\! \sum_{j \in \N^\ast}   \langle u,\overline{\phi_j}\rangle \left(\int_{\R^d}\hspace*{-1.5mm}\psi \phi_j \right)
			\!=\! \left\langle \sum_{j \in \N^\ast} u_j\phi_j, \psi\right\rangle .
    		\]
	\end{rem}
    
	The next Theorem \ref{thm:exp_Hmmu} is our first main result. We show that a temperate distribution $u$ belongs 
	to a certain Sobolev-Kato space if and only if its associated Fourier coefficients $\{u_j\}_{j \in \N^\ast}$ satisfy
	a certain behaviour for $j\to\infty$, in relation with the eigenvalues $\{\lambda_j\}_{j \in \N^\ast}$ of $P$.
	
	\begin{thm}\label{thm:exp_Hmmu}
		Let $P \in \mathrm{Op}(S^{m,\mu}(\R^d))$ be an elliptic, normal SG-operator 
		with order components $m,\mu>0$, and denote by $\{\phi_j\}_{j \in \N^\ast}$ a basis of orthonormal 
		eigenfunctions of $P$ with corresponding eigenvalues $\{\lambda_j\}_{j \in \N^\ast}$. Let $r \in \N$. Then, $u \in \Sc^\prime(\R^d)$
		belongs to $H^{rm,r\mu}(\R^d)$ if and only if 
		\[
			\sum_{j \in \mathbb N} |u_j|^2 |\lambda_j|^{2r} <\infty,
		\]
		with $u_j$ defined in \eqref{eq:uj}. Moreover, 
		\begin{equation}\label{eq:Hmmuseries}
			\|u\|_{H^{rm,r\mu}(\R^d)}^2 \asymp \sum_{j \in \N^\ast} |u_j|^2 | \widetilde{\lambda}_j|^{2r},
		\end{equation}
		with the eigenvalues $\{\widetilde{\lambda}_j\}_{j \in \N^\ast}$ of $\widetilde{P} = P + P_0$, $P_0$ the projection on $\ker P$,
		given in \eqref{eq:wtlambda}.
	\end{thm}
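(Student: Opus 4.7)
My plan is to combine an iterated form of Proposition \ref{prop:eqnorws} with Parseval's identity applied to the orthonormal eigenbasis $\{\phi_j\}_{j \in \N^\ast}$. The first ingredient upgrades the case $r=1$ to the norm equivalence $\|u\|_{H^{rm,r\mu}} \asymp \|\widetilde{P}^r u\|_{L^2}$; the second turns $\|\widetilde{P}^r u\|_{L^2}^2$ into the desired series; a final bookkeeping step bridges the gap between the weights $|\widetilde{\lambda}_j|^{2r}$ and $|\lambda_j|^{2r}$.

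For the first step, I would observe that since the SG-calculus is closed under composition, $\widetilde{P}^r \in \Op(S^{rm,r\mu})$ is elliptic, and the injectivity of $\widetilde{P}$ together with $Q\widetilde{P} = \mathrm{Id} = \widetilde{P}Q$ lifts to $\widetilde{P}^r$ with inverse $Q^r \in \Op(S^{-rm,-r\mu})$. Moreover, normality of $\widetilde{P}$ propagates to $\widetilde{P}^r$ because $(\widetilde{P}^r)^\ast = (\widetilde{P}^\ast)^r$ commutes with $\widetilde{P}^r$. Hence $\widetilde{P}^r\colon H^{rm,r\mu} \to L^2$ is a topological isomorphism. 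Equivalently, one may argue by induction on $r$: the continuity of $\widetilde{P}\colon H^{km,k\mu} \to H^{(k-1)m,(k-1)\mu}$ and of $Q$ in the reverse direction, both from the base case combined with SG mapping properties, yield $\|u\|_{H^{rm,r\mu}} \asymp \|\widetilde{P}u\|_{H^{(r-1)m,(r-1)\mu}} \asymp \cdots \asymp \|\widetilde{P}^r u\|_{L^2}$.

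With this in hand, for the direct implication I would use that for $u \in H^{rm,r\mu} \subset L^2$ (since $rm,r\mu>0$) one has $\widetilde{P}^r u \in L^2$, and normality yields $(\widetilde{P}^r)^\ast \phi_j = \overline{\widetilde{\lambda}_j}^{\,r}\phi_j$. Hence $(\widetilde{P}^r u, \phi_j)_{L^2} = \widetilde{\lambda}_j^r u_j$, where the $L^2$ inner product agrees with the duality pairing $u_j = \langle u, \overline{\phi_j}\rangle$ on $\Sc'\times \Sc$. Parseval's identity then gives $\|\widetilde{P}^r u\|_{L^2}^2 = \sum_j |\widetilde{\lambda}_j|^{2r}|u_j|^2$, which together with the first step proves \eqref{eq:Hmmuseries}. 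For the converse, given $u \in \Sc'$ with $\sum_j |u_j|^2 |\widetilde{\lambda}_j|^{2r} < \infty$, I would set $f := \sum_j u_j \widetilde{\lambda}_j^r \phi_j \in L^2$ and $v := Q^r f \in H^{rm,r\mu}$; using $Q\phi_j = \widetilde{\lambda}_j^{-1}\phi_j$ one finds $v = \sum_j u_j \phi_j$, which coincides with $u$ by the expansion in the remark preceding the theorem, so $u \in H^{rm,r\mu}$.

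To move from $\widetilde{\lambda}_j$ to $\lambda_j$ in the statement, note that $\widetilde{\lambda}_j = \lambda_j$ for $j \geq N+1$ while $\widetilde{\lambda}_j = 1$ and $\lambda_j = 0$ for $j = 1,\dots,N$. Since each $u_j = \langle u, \overline{\phi_j}\rangle$ is a finite number (as $\phi_j \in \Sc$ and $u \in \Sc'$), the two series $\sum_j|u_j|^2|\lambda_j|^{2r}$ and $\sum_j|u_j|^2|\widetilde{\lambda}_j|^{2r}$ differ by the finite quantity $\sum_{j=1}^N|u_j|^2$, hence converge simultaneously. The main obstacle I anticipate is the first step: one has to ensure that ellipticity, normality and invertibility of $\widetilde{P}$ all propagate cleanly to $\widetilde{P}^r$, so that $\widetilde{P}^r$ genuinely realises the topological isomorphism $H^{rm,r\mu} \to L^2$; everything else is essentially spectral bookkeeping in the fixed basis $\{\phi_j\}$.
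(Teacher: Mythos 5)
Your proposal is correct and follows essentially the same route as the paper: establish the topological isomorphism $\widetilde{P}^r\colon H^{rm,r\mu}\to L^2$ and translate the $L^2$ norm via Parseval in the eigenbasis $\{\phi_j\}$. The paper's version is terser---it computes $\|P^r u\|_{L^2}^2$ directly from orthogonality and uses $\widetilde{P}^r = P^r + P_0$ (a consequence of $PP_0 = P_0 P = 0$) before invoking Proposition \ref{prop:eqnorws}---whereas you verify that ellipticity, normality and invertibility pass to $\widetilde{P}^r$, spell out the converse direction via $v=Q^r f$, and make the $\widetilde\lambda_j$-versus-$\lambda_j$ bookkeeping explicit, but these are details the paper leaves implicit rather than a different method.
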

\begin{proof}
Let $u = \sum_{j \in \N^\ast}  u_j \phi_j \in \Sc^\prime$. Then, 
\begin{align*}
\|P^r u\|^2_{L^2} & = \sum_{j \in \N^\ast}  \left| \left(P^r u , \phi_j\right)\right|^2
 =  \sum_{j \in \N^\ast}  \left|\sum_{k \in \N}  u_k \left(P^r \phi_k , \phi_j\right)\right|^2 \\
& =  \sum_{j \in \N^\ast}  \left|\sum_{k \in \N}  u_k \lambda_k^r \left( \phi_k , \phi_j\right)\right|^2 
=  \sum_{j \in \N^\ast} |u_j|^2 |\lambda_{j}|^{2r}.
\end{align*}
By normality of $P$, and the fact that $P_0$ is a projection,
\[
	PP_0=P_0P=0\Rightarrow\widetilde{P}^r=(P+P_0)^r=P^r+P_0^r=P^r+P_0,
\]
and the claims follow, in view of Proposition \ref{prop:eqnorws}. 
\end{proof}
\begin{rem}\label{rem:normphij}
	Notice that \eqref{eq:Hmmuseries} actually holds true for $u\in\Sc'$, $r\in\Z$. Indeed, for $r\in\Z$, $r<0$, 
	we have $\|u\|_{H^{rm,r\mu}}\asymp\|Q^r u\|_{L^2}$. Incidentally, we also observe that,
	\[
		N_{2}|\widetilde{\lambda}_j|^r \le \|\phi_j\|_{H^{rm,r\mu}}\le N_{1} |\widetilde{\lambda}_j|^r, \quad j \in \N^\ast,r\in\Z,
	\]
	with constants $N_1,N_2>0$ depending only on $P,r,d,m,\mu$. 
\end{rem}
\begin{cor}\label{cor:schw}
	Under the same hypotheses of Theorem \ref{thm:exp_Hmmu}, we have, for $u \in \Sc^\prime(\R^d)$,
	\[
		u \in \Sc(\R^d)\iff \sum_{j \in \N^\ast} |u_j|^2 |\lambda_j|^{2M} <\infty \text{ for any $M\in\N$}.
	\]
\end{cor}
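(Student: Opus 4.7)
The strategy is to reduce the corollary to Theorem \ref{thm:exp_Hmmu} via the identification $\Sc(\R^d)=\bigcap_{r,\rho\in\R} H^{r,\rho}(\R^d)$. My first step is to observe that, since $m,\mu>0$, the family $\{H^{Mm,M\mu}\}_{M\in\N}$ is cofinal in this intersection: for any $r,\rho\in\R$, choosing $M\in\N$ with $Mm\ge r$ and $M\mu\ge\rho$ yields the continuous embedding $H^{Mm,M\mu}\hookrightarrow H^{r,\rho}$. Hence, $u\in\Sc(\R^d)$ if and only if $u\in H^{Mm,M\mu}(\R^d)$ for every $M\in\N$. I would then invoke Theorem \ref{thm:exp_Hmmu} (with $r=M$) to reformulate each such membership as
\[
\sum_{j\in\N^\ast}|u_j|^2|\widetilde\lambda_j|^{2M}<\infty, \qquad M\in\N.
\]

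The remaining task is to pass from $\widetilde\lambda_j$ to $\lambda_j$. In view of \eqref{eq:wtlambda}, the two sequences coincide for $j\ge N+1$, while $\widetilde\lambda_j=1$ and $\lambda_j=0$ for $j=1,\dots,N$. Consequently,
\[
\sum_{j\in\N^\ast}|u_j|^2|\widetilde\lambda_j|^{2M}=\sum_{j=1}^{N}|u_j|^2+\sum_{j\ge N+1}|u_j|^2|\lambda_j|^{2M},
\]
and the first term on the right-hand side is a finite sum of finite quantities, since $u\in\Sc^\prime(\R^d)$ and $\phi_j\in\Sc(\R^d)$ make each $u_j=\langle u,\overline{\phi_j}\rangle$ well defined. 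Therefore, convergence of $\sum|u_j|^2|\widetilde\lambda_j|^{2M}$ is equivalent to convergence of $\sum|u_j|^2|\lambda_j|^{2M}$.

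Putting the two steps together gives the claimed equivalence in both directions: if $u\in\Sc$, apply Theorem \ref{thm:exp_Hmmu} for every $M$ and drop the finite $\ker P$ contribution; conversely, if $\sum|u_j|^2|\lambda_j|^{2M}<\infty$ for every $M$, reinsert the finite sum to recover $\sum|u_j|^2|\widetilde\lambda_j|^{2M}<\infty$ and deduce $u\in H^{Mm,M\mu}$ for every $M\in\N$, hence $u\in\Sc(\R^d)$ by cofinality. I do not anticipate a genuine obstacle; the only point requiring care is the bookkeeping between $\lambda_j$ and $\widetilde\lambda_j$ on the finite-dimensional kernel of $P$, which is precisely what distinguishes this corollary from a direct restatement of Theorem \ref{thm:exp_Hmmu}.
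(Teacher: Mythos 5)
Your proof is correct and follows the same route as the paper's one-line argument, simply making the cofinality of $\{H^{Mm,M\mu}\}_{M\in\N}$ in $\bigcap_{r,\rho}H^{r,\rho}=\Sc$ explicit. The only superfluous step is the passage through $\widetilde\lambda_j$: Theorem \ref{thm:exp_Hmmu} already states the iff directly in terms of $\lambda_j$, so no $\lambda_j$-versus-$\widetilde\lambda_j$ bookkeeping is needed here.
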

\begin{proof}
	Immediate, by Theorem \ref{thm:exp_Hmmu}, in view of the equality $\Sc=\displaystyle\bigcap_{r,\rho\in\R}H^{r,\rho}$
	(cf. Appendix \ref{sec:sgcalc} and \cite[Ch. 3]{cordes}).
\end{proof}
%
%
The next Corollary \ref{cor:exp_Hmmu} follows by combining Theorem \ref{thm:exp_Hmmu} with 
results concerning the spectral asymptotics of self-adjoint SG-classical operators (cf. their summary 
in Appendix \ref{subs:sgasym}) and the properties of their fractional powers
(see, e.g., \cite{MSS06}, \cite[Ch. 4]{NR} and the references quoted therein).
\begin{cor}\label{cor:exp_Hmmu}
	Let $P \in \mathrm{Op}(S_\mathrm{cl}^{m,\mu}(\R^d))$ be an elliptic, invertible, self-adjoint, positive, classical SG-operator 
	with order components $m,\mu>0$, and denote by $\{\phi_j\}_{j \in \N^\ast}$ a basis of orthonormal 
	eigenfunctions of $P$ with corresponding eigenvalues $\{\lambda_j\}_{j \in \N^\ast}$. Let $r \in \R$. Then, $u \in \Sc^\prime(\R^d)$
	belongs to $H^{rm,r\mu}(\R^d)$ if and only if 
	\[
		\begin{cases}
			\displaystyle\sum_{j \in \mathbb N} |u_j|^2 \, j^\frac{2r\min\{m,\mu\}}{d} <\infty, & \text{ if $m\not=\mu$,}
			\\
			\displaystyle\sum_{j \in \mathbb N} |u_j|^2 \left(\frac{j}{\log j}\right)^\frac{2rm}{d} <\infty, & \text{ if $m=\mu$,}
		\end{cases}
	\]
	with $u_j$ defined in \eqref{eq:uj}. Moreover, 
	\begin{equation}\label{eq:Hmmuseriesbis}
		\|u\|_{H^{rm,r\mu}(\R^d)}^2 \asymp \sum_{j \in \N^\ast} |u_j|^2 | {\lambda}_j|^{2r} \asymp
				\begin{cases}
			\displaystyle\sum_{j \in \mathbb N} |u_j|^2 \, j^\frac{2r\min\{m,\mu\}}{d} <\infty,  &\text{if $m\not=\mu$,}
			\\
			\displaystyle\sum_{j \in \mathbb N} |u_j|^2 \left(\frac{j}{\log j}\right)^\frac{2rm}{d} <\infty,  &\text{if $m=\mu$.}
		\end{cases}
	\end{equation}
\end{cor}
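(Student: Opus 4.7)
The plan is to reduce the statement to Theorem \ref{thm:exp_Hmmu} after extending the latter from $r \in \Z$ (as in Remark \ref{rem:normphij}) to $r \in \R$, and then to substitute the spectral asymptotics for $\lambda_j$.

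First I would extend Theorem \ref{thm:exp_Hmmu} to real $r$. Since $P$ is elliptic, invertible, self-adjoint and positive, its spectrum lies in $(0,\infty)$, and the fractional power $P^r$ can be defined by functional calculus for every $r \in \R$. By the results recalled in the appendix and the references cited (in particular \cite{MSS06} and \cite[Ch. 4]{NR}), $P^r \in \Op(S_{\mathrm{cl}}^{rm, r\mu}(\R^d))$ is itself an elliptic, invertible, self-adjoint, positive classical SG-operator of order $(rm, r\mu)$, with the same eigenfunctions $\{\phi_j\}_{j \in \N^\ast}$ and with eigenvalues $\{\lambda_j^r\}_{j \in \N^\ast}$. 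Invertibility of $P$ gives $\ker P = \{0\}$, so $N = 0$, $P_0 = 0$, $\widetilde P = P$ and $\widetilde\lambda_j = \lambda_j$. Applying Proposition \ref{prop:eqnorws} to $P^r$ (for $r > 0$; for $r < 0$ one argues dually via $P^{-r}$) yields
\[
    \|u\|_{H^{rm, r\mu}(\R^d)}^2 \asymp \|P^r u\|_{L^2(\R^d)}^2 = \sum_{j \in \N^\ast} |u_j|^2\, |\lambda_j|^{2r},
\]
exactly as in the proof of Theorem \ref{thm:exp_Hmmu}, by expanding $P^r u$ in the orthonormal basis $\{\phi_j\}_{j \in \N^\ast}$ and using $P^r \phi_j = \lambda_j^r \phi_j$. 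This establishes the first equivalence in \eqref{eq:Hmmuseriesbis} for every $r \in \R$.

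Next I would substitute the spectral asymptotics. By the Weyl-type results for self-adjoint elliptic classical SG-operators summarised in Appendix \ref{subs:sgasym}, the eigenvalues of $P$ (arranged in non-decreasing order) satisfy
\[
    \lambda_j \asymp j^{\min\{m,\mu\}/d} \quad \text{if } m \neq \mu,
    \qquad
    \lambda_j \asymp \left(\frac{j}{\log j}\right)^{m/d} \quad \text{if } m = \mu,
\]
as $j \to \infty$. Since $\lambda_j > 0$ and $\lambda_j \to \infty$, these two-sided estimates extend to all $j \in \N^\ast$ (the finitely many initial terms only modify the implicit constants). Raising to the power $2r$ (for either sign of $r$, as the asymptotics are two-sided) and inserting into the first equivalence yields the second equivalence in \eqref{eq:Hmmuseriesbis}, and the characterisation of $H^{rm, r\mu}(\R^d)$ follows by selecting the convergent case.

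The main obstacle is the clean extension of Theorem \ref{thm:exp_Hmmu} to real $r$: one must verify that $P^r$, defined a priori by functional calculus on $L^2$, actually coincides with an operator in $\Op(S_{\mathrm{cl}}^{rm, r\mu})$ that realises the Sobolev-Kato norm $\|\cdot\|_{H^{rm, r\mu}}$ up to equivalence. Granted the cited results on complex powers of SG-operators, however, this is straightforward, since ellipticity plus positivity places us precisely in the framework where such fractional powers exist, are classical SG of the expected order, and have the eigenfunction expansion inherited from $P$. The spectral asymptotics step is then a direct substitution.
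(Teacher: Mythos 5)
Your proof is correct and follows essentially the same route as the paper: both pass to the fractional power $P^r\in\Op(S_\mathrm{cl}^{rm,r\mu})$ (via \cite{MSS06}), use that it shares the eigenfunctions $\{\phi_j\}$ with eigenvalues $\{\lambda_j^r\}$ and is bijective $H^{rm,r\mu}\to L^2$ to obtain $\|u\|_{H^{rm,r\mu}}^2\asymp\|P^r u\|_{L^2}^2=\sum_j|u_j|^2|\lambda_j|^{2r}$, and then substitute the Weyl asymptotics \eqref{eq:sgeigenasm}. The only cosmetic difference is that you split the cases $r>0$ and $r<0$ to invoke Proposition \ref{prop:eqnorws}, whereas the paper argues uniformly from bijectivity of $P^r\colon H^{rm,r\mu}\to L^2$; both are fine.
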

\begin{proof}
	The operator $P^r$ is self-adjoint and it holds $P^r\in\Op(S^{rm,r\mu}_\mathrm{cl})$ (cf. \cite{MSS06}).
	It has eigenfunctions $\{\phi_j\}_{j \in \N^\ast}$ with eigenvalues $\{\lambda_j^r\}_{j \in \N^\ast}$, and satisfies
	$P^ru=\sum_{j \in \N^\ast}\lambda_j^ru_j\phi_j$. It is then bijective
	as an operator $P^r\colon H^{rm,r\mu}\to L^2$, so that, for any $u\in H^{rm,r\mu}$,
	$\|u\|_{H^{rm,r\mu}}\asymp \|P^ru\|_{L^2}$. The claims follow by 
	$\displaystyle\|P^ru\|^2_{L^2}=\sum_{j \in \N^\ast}|u_j|^2|\lambda_j|^{2r}$, as in Theorem \ref{thm:exp_Hmmu}, 
	taking into account \eqref{eq:sgeigenasm} 
	in Appendix \ref{subs:sgasym}.
\end{proof}
\begin{rem}\label{rem:normphijbis}
	Under the hypotheses of Corollary \ref{cor:exp_Hmmu}, we observe that,
	for $r\in\Z$, $j\to\infty$,
	\begin{equation}\label{eq:normphijasymp}
			\|\phi_j\|_{H^{rm,r\mu}} \asymp | {\lambda}_j|^{r} \asymp
			\begin{cases}
			 j^\frac{r\min\{m,\mu\}}{d},  & \text{ if $m\not=\mu$,}
			\\
			\displaystyle \left(\frac{j}{\log j}\right)^\frac{rm}{d},  & \text{ if $m=\mu$,}
		\end{cases}
	\end{equation}
	where the constants tacitly involved in \eqref{eq:normphijasymp} depend only on $P,r,d,m,\mu$.
\end{rem}
The next, simple Lemma \ref{lemma:h-s} is a property of the kernel of the powers $P^M$ of a
SG-operator of negative orders, for sufficiently large $M\in\N$. 
For the sake of completeness, we recall its proof in Appendix \ref{sec:sgcalc}. 
\begin{lemma}\label{lemma:h-s}
	Let $P \in \mathrm{Op}(S^{m,\mu}(\R^d))$ be an SG-operator with order components $m,\mu<0$. Then
	$P^M$ is Hilbert-Schmidt for any $M \in \N$ such that $\max\{Mm, M\mu\} <- \frac d2$. 
\end{lemma}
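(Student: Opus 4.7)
The plan is to reduce the statement to a standard kernel estimate for SG-operators whose symbol has sufficiently negative orders, and then exhibit the Hilbert-Schmidt property through a direct $L^2$ computation of the Schwartz kernel.

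First, I would invoke the composition property of the SG-calculus (recalled in Appendix \ref{sec:sgcalc}): since $P\in\Op(S^{m,\mu})$, any iterated composition yields $P^M\in\Op(S^{Mm,M\mu})$, i.e.\ there exists a symbol $a_M\in S^{Mm,M\mu}$ with
\[
P^M=\Op(a_M),\qquad |\partial_x^\alpha\partial_\xi^\beta a_M(x,\xi)|\le C_{\alpha\beta}\x^{Mm-|\alpha|}\csi^{M\mu-|\beta|}.
\]
Set $m'=Mm$, $\mu'=M\mu$. The assumption $\max\{Mm,M\mu\}<-d/2$ means both $m'<-d/2$ and $\mu'<-d/2$, which is the sole input the computation below requires.

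Next, I would express the kernel of $P^M$ by the usual oscillatory integral
\[
K(x,y)=(2\pi)^{-d}\int_{\R^d} e^{\ii(x-y)\xi}\,a_M(x,\xi)\,d\xi.
\]
Since $2\mu'<-d$, the function $\xi\mapsto a_M(x,\xi)$ is in $L^2_\xi(\R^d)$ for every fixed $x$, and the above integral defines $K(x,\cdot)$ as the inverse Fourier transform (in the $L^2$ sense) of $a_M(x,\cdot)$ evaluated at $x-y$. Hence, by Plancherel applied in the $y$ variable for fixed $x$,
\[
\int_{\R^d}|K(x,y)|^2\,dy=(2\pi)^{-d}\int_{\R^d}|a_M(x,\xi)|^2\,d\xi.
\]
Integrating in $x$ and using the symbol estimate yields
\[
\|K\|_{L^2(\R^d\times\R^d)}^2
\le C\int_{\R^d}\int_{\R^d}\x^{2m'}\csi^{2\mu'}\,dx\,d\xi,
\]
and the right-hand side is finite exactly because $2m'<-d$ and $2\mu'<-d$. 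Therefore $K\in L^2(\R^d\times\R^d)$, which is the well-known characterisation of Hilbert-Schmidt operators on $L^2(\R^d)$, and $P^M$ is Hilbert-Schmidt as claimed.

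There is essentially no obstacle; the only point that requires a little care is justifying Plancherel in $y$ before integrating in $x$, which follows from Tonelli/Fubini once we know the non-negative integrand $\x^{2m'}\csi^{2\mu'}$ is integrable on $\R^d\times\R^d$ under our hypotheses. The SG-composition fact $P^M\in\Op(S^{Mm,M\mu})$ is invoked as a black box from the calculus, so the whole argument reduces to the elementary integrability condition $\max\{Mm,M\mu\}<-d/2$.
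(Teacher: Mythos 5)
Your proof is correct and follows essentially the same route as the paper's: invoke the SG-composition calculus to place $P^M\in\Op(S^{Mm,M\mu})$, write the Schwartz kernel as the inverse Fourier transform of the symbol in the last variable, apply Plancherel in the $(x-y)$-variable for fixed $x$, and integrate in $x$ using the decay $\x^{2Mm}\csi^{2M\mu}\in L^1(\R^{2d})$ guaranteed by $\max\{Mm,M\mu\}<-d/2$. The only cosmetic difference is that you spell out the Tonelli/Fubini justification for interchanging the $x$-integration with Plancherel, which the paper leaves implicit.
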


We now prove our second main result, namely, a characterisation of the elements of the space $\Sc$ by means
of uniform convergence of series generated by SG-operators. This is an analog of Theorem 10.2 in \cite{See65}.
\begin{thm}\label{thm:unifconver}
	Let $P \in \mathrm{Op}(S_\mathrm{cl}^{m,\mu}(\R^d))$ be an elliptic, normal operator 
	with order components $m,\mu>0$ or $m,\mu<0$, and denote by $\{\phi_j\}_{j \in \N^\ast}$ a basis of orthonormal 
	eigenfunctions of $P$. Then, $f\in\Sc(\R^d)$ if and only if 
	\begin{equation}\label{eq:convunsch}
		\sum_{j \in \N^\ast} \left|(f,\phi_j)_{L^2(\R^d)} \right| \, \left|(A\phi_j)(x) \right|, \quad x\in\R^d, 
	\end{equation}
	converges uniformly on $\R^d$ for every SG-operator $A$.
\end{thm}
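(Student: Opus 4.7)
The plan is to prove both implications by leveraging the eigenfunction expansion characterisations from Corollary \ref{cor:schw} and Theorem \ref{thm:exp_Hmmu}, together with the norm bound $\|\phi_j\|_{H^{rm,r\mu}} \leq N_1 |\widetilde\lambda_j|^r$ from Remark \ref{rem:normphij} and the standard Sobolev embedding $H^{0,\rho}(\R^d) \hookrightarrow L^\infty(\R^d)$ for $\rho > d/2$. The forward direction is the substantive one; the reverse follows almost immediately once one recalls that $\Sc(\R^d)$ is characterised by the $L^\infty$-boundedness of $x^\alpha D^\beta f$ for all $\alpha,\beta \in \N^d$, and that each such differential monomial is an SG-operator of order $(|\alpha|,|\beta|)$.

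\textbf{Forward direction ($\Rightarrow$).} Fix an arbitrary $A \in \Op(S^{s,\sigma}(\R^d))$. First I would bound $\|A\phi_j\|_{L^\infty(\R^d)}$ polynomially in $|\widetilde\lambda_j|$. Using that $A\colon H^{s,\rho+\sigma} \to H^{0,\rho}$ is continuous and $H^{0,\rho} \hookrightarrow L^\infty$ for $\rho > d/2$ gives
\[
\|A\phi_j\|_{L^\infty(\R^d)} \leq C_\rho\,\|\phi_j\|_{H^{s,\rho+\sigma}(\R^d)}.
\]
Choosing $k \in \Z$ of appropriate sign (large positive when $m,\mu>0$ and large negative when $m,\mu<0$) so that $H^{km,k\mu} \hookrightarrow H^{s,\rho+\sigma}$, Remark \ref{rem:normphij} yields $\|\phi_j\|_{H^{s,\rho+\sigma}} \leq N_1 |\widetilde\lambda_j|^k$, hence $\|A\phi_j\|_{L^\infty(\R^d)} \leq C_A\,|\widetilde\lambda_j|^k$. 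Then, for any $L \in \Z$, Cauchy--Schwarz yields
\[
\sum_{j \in \N^\ast} |f_j|\,|\widetilde\lambda_j|^k \leq \Bigl(\sum_{j \in \N^\ast} |f_j|^2 |\widetilde\lambda_j|^{2(k+L)}\Bigr)^{1/2} \Bigl(\sum_{j \in \N^\ast} |\widetilde\lambda_j|^{-2L}\Bigr)^{1/2}.
\]
The first factor is finite for every $L \in \Z$, since by Theorem \ref{thm:exp_Hmmu} and Corollary \ref{cor:schw} the condition $f \in \Sc$ is equivalent to the summability of $|f_j|^2 |\widetilde\lambda_j|^{2r}$ for all $r \in \Z$. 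The second factor is finite for $L$ of appropriate sign and sufficiently large modulus, in view of the spectral asymptotics of Corollary \ref{cor:exp_Hmmu} (polynomial growth of $|\widetilde\lambda_j|$ in $j$ when $m,\mu>0$, and the corresponding reciprocal decay rate when $m,\mu<0$, obtained by considering the elliptic operator $\widetilde P^{-1}\in\Op(S^{-m,-\mu})$). The Weierstrass M-test then upgrades these estimates to uniform convergence of $\sum_j |f_j|\,|A\phi_j(x)|$ on $\R^d$.

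\textbf{Reverse direction ($\Leftarrow$).} Assume uniform convergence of the series for every SG-operator $A$. Given $\alpha,\beta \in \N^d$, specialise to $A = x^\alpha D^\beta \in \Op(S^{|\alpha|,|\beta|}(\R^d))$. Since each $A\phi_j \in \Sc \subset L^\infty$, the uniform convergence of the absolute-value series implies that the partial sums of $\sum_j f_j A\phi_j$ form a uniformly Cauchy sequence in $L^\infty(\R^d)$; their limit coincides with $Af$ by continuity of $A$ on $\Sc'$ applied to the expansion $f = \sum_j f_j \phi_j$. Hence $x^\alpha D^\beta f \in L^\infty(\R^d)$ for every $\alpha,\beta \in \N^d$, which together with the standard Sobolev bootstrap from $D^\beta f \in L^\infty$ for all $\beta$ gives $f \in \Sc(\R^d)$.

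The main obstacle is the sign bookkeeping in the forward direction: the embedding $H^{km,k\mu} \subseteq H^{s,\rho+\sigma}$ requires $k$ of the sign dictated by $m,\mu$, and the summability of $\sum_j |\widetilde\lambda_j|^{-2L}$ requires $L$ of the corresponding sign, since $|\widetilde\lambda_j|\to\infty$ when $m,\mu>0$ but $|\widetilde\lambda_j|\to 0$ when $m,\mu<0$. Once the sign conventions are harmonised, the Cauchy--Schwarz estimate works uniformly in both regimes without further modification.
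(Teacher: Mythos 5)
Your overall architecture is the same as the paper's: bound $\|A\phi_j\|_{L^\infty}$ polynomially in $|\widetilde\lambda_j|$ via Sobolev embedding and Remark \ref{rem:normphij}, then combine with the rapid decay of $|f_j|$ from Corollary \ref{cor:schw}, and in the converse take $A = x^\alpha D^\beta$ and read off Schwartz seminorm bounds from the uniformly convergent series. The reverse direction is correct as written.

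There is, however, a genuine gap in the forward direction: you justify the convergence of $\sum_{j}|\widetilde\lambda_j|^{-2L}$ by appealing to the spectral asymptotics of Corollary \ref{cor:exp_Hmmu} (equivalently, Remark \ref{rem:polyboundlambda} and the Weyl formula \eqref{eq:sgeigenasm}). Those results are stated and proved only for a \emph{self-adjoint, positive} classical SG-operator, where the spectrum is real and ordered and the counting function $N(\lambda)$ makes sense. Theorem \ref{thm:unifconver} assumes $P$ merely normal; the eigenvalues $\widetilde\lambda_j$ are then in general complex, and no Weyl-type growth rate for $|\widetilde\lambda_j|$ is available from the cited results. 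The paper avoids this by a different and more robust mechanism: it first replaces $P$ by a suitable power of $(P+P_0)^{-1}$ (which has the same eigenfunctions), arranging for the order components to be $<-d/2$, and then invokes Lemma \ref{lemma:h-s} to conclude that this operator is Hilbert–Schmidt. For a compact normal Hilbert–Schmidt operator one has $\sum_j|\mu_j|^2 < \infty$ with $\mu_j$ its eigenvalues, which is exactly the summability you need for the second factor of your Cauchy–Schwarz split, and no spectral asymptotics enter. Your estimate goes through once you swap Corollary \ref{cor:exp_Hmmu} for Lemma \ref{lemma:h-s} here. This reduction also dissolves the sign bookkeeping you flag at the end, since one can always work in the regime of negative orders; the paper then avoids Cauchy–Schwarz altogether by using the boundedness of $\{|f_j|\,|\lambda_j|^{p-3}\}_j$ together with $\sum_j|\lambda_j|^2<\infty$, which is marginally more direct than your two-factor split, though both are valid.
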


\begin{proof}
	Notice that, since \eqref{eq:convunsch} only involves the eigenfunctions of $P$, it is possible to consider
	$\widetilde{P}=P+P_0\in \Op(S^{m,\mu})$ in place of $P$: the eigenfunctions are the same, with eigenvalues
	$\{\widetilde{\lambda}_j\}_{j \in \N^\ast}$. Then, it is no restriction to assume $P$ invertible, and even
	with order components $m,\mu<0$. Indeed, if that is not the case, since $\widetilde{P}$ has no 
	vanishing eigenvalues, we can instead use $Q=(P+P_0)^{-1}\in\Op(S^{-m,-\mu})$, which 
	has again the same eigenfunctions, with eigenvalues $\{\widetilde{\lambda}_j^{-1}\}_{j \in \N^\ast}$.
	Having negative order components, as an operator from $L^2$ to itself, $P$ is then
	compact (cf. Appendix \ref{sec:sgcalc}).
	Finally, $P^M$, $M\in\N$, has again the same eigenfunctions of $P$, $P+P_0$, and $(P+P_0)^{-1}$, with eigenvalues
	$\{{\lambda}_j^{M}\}_{j \in \N^\ast}$. Summing up, for this proof we can assume that $P$ is also
	Hilbert-Schmidt (cf. Lemma \ref{lemma:h-s}), that is, 
	\[
		m,\mu<-\frac{d}{2}\Rightarrow\displaystyle\sum_{j \in \N^\ast}|\lambda_j|^2<\infty, \quad\lambda_j\not=0, j \in \N^\ast,
	\]
	and the topology of $H^{Mm,M\mu}$ is given
	by the equivalent norm $\|P^Mu\|_{L^2}$, $M\in\Z$, $M<0$. 
	
	Recall that, by the embeddings of the Sobolev-Kato spaces (cf. \cite[Ch. 3]{cordes} and Appendix \ref{sec:sgcalc}) and Sobolev's Lemma,
	for $t\ge0$, $\tau > \frac d2$, one has $\Sc\hookrightarrow H^{t, \tau} \hookrightarrow H^{0, \tau} \equiv H^\tau\hookrightarrow L^\infty$. 
	
	Assume $A\in\Op(S^{pm,p\mu})$, $p\in\Z$, $p<0$. 
	Again, this is no restriction, given the inclusions among
	the SG symbols and operators spaces (cf. Appendix \ref{sec:sgcalc}). It follows that, for any $j\in\Z$, $u\in H^{(j+p)m,(j+p)\mu}$,
	\[
		\|Au\|_{H^{jm,j\mu}}\lesssim \|u\|_{H^{(j+p)m,(j+p)\mu}},
	\]
	and, by the observations above,
	\begin{align*}
			|(A\phi_j)(x) | \leq \|A\phi_j\|_{L^\infty} &\lesssim \|A\phi_j\|_{H^{-m,-\mu}}
			\lesssim \|\phi_j\|_{H^{m(p-1),\mu(p-1)}} 
			\\
			&\asymp \|P^{p-1}\phi_j\|_{L^2}
			= |{\lambda}_j|^{p-1}.
	\end{align*}
 	Now, in view of Corollary \ref{cor:schw}, the sequence $\{|(f,\phi_j)| \, |\lambda_j|^{p-3}\}_{j \in \N^\ast}$ is bounded for any $f\in\Sc$: indeed,
	its elements are the terms of a convergent series. We then conclude
	\begin{align*}
	 	\sum_{j \in \N^\ast} \left|(f,\phi_j) \right| \, \left|(A\phi_j)(x) \right| &\lesssim
	 	\sum_{j \in \N^\ast} \left|(f,\phi_j)\right|\,\left| \lambda_j\right|^{p-3} \, \left| \lambda_j \right|^{2} 
		\\
		&\lesssim\sum_{j \in \N^\ast} |\lambda_j|^2<\infty, \quad x\in\R^d.
	\end{align*}
	We have showed that, for any $f\in\Sc$ and any SG-operator $A$, 
	\eqref{eq:convunsch} is totally convergent, which implies the first part of the claim.
	
	\smallskip
	Conversely, assume that \eqref{eq:convunsch} converges uniformly for every operator $A\in\Op(S^{s,\sigma})$, $s,\sigma\in\R$. Then
	we can take $A=M^\alpha D^\beta\colon\Sc\to\Sc$, where $D^\beta=(-i)^{|\beta|}\partial^\beta$ and, for
	$\phi\in\Sc$, $(M^\alpha \phi)(x)=x^\alpha \phi(x)$, $x\in\R^d$, $\alpha,\beta\in\Z_+^d$, so that $A\in\Op(S^{|\alpha|,|\beta|}_\mathrm{cl})$.
	In principle, we assume $f\in\Sc^\prime$ and the coefficients $(f,\phi_j)$ understood in the sense of \eqref{eq:uj}.
	However, defining 
	\begin{equation}\label{eq:convunschbis}
		\hspace*{-2mm}f_{\alpha\beta}(x)=\sum_{j \in \N^\ast} (f,\phi_j) \, (M^\alpha D^\beta\phi_j)(x)
					= \sum_{j \in \N^\ast} (f,\phi_j) \, x^\alpha D^\beta\phi_j(x),  x\in\R^d, 
	\end{equation}
	the hypotheses clearly imply that $f_{\alpha\beta}\in\Cc$, and
	actually, setting $g=f_{00}$, it also follows $g\in\cinf$ and $M^\alpha D^\beta g = f_{\alpha\beta}$,
	$\alpha,\beta\in\Z_+^d$. Moreover, by the hypothesis of uniform convergence on $\R^d$ of \eqref{eq:convunsch}, we also see that 
	for any $\alpha,\beta\in\Z_+^d$ there exists $\nu_{\alpha\beta}\in\N$ such that, for any $N>\nu_{\alpha\beta}$,
	\begin{align*}
		\sup_{x\in\R^d}\left| x^\alpha D^\beta g(x) \phantom{ \sum_{j=1}^N}\right.\hspace*{-5mm}
		&- \left.\sum_{j=1}^N  (f,\phi_j) \, x^\alpha D^\beta\phi_j(x) \right|
		\\
		&= \sup_{x\in\R^d}\left| \sum_{j\ge N+1}  (f,\phi_j) \, x^\alpha D^\beta\phi_j(x) \right| \le 1.
	\end{align*}
	Since $\phi_j\in\Sc$, $j \in \N^\ast$, it follows that, for any $\alpha,\beta\in\Z_+^d$, choosing $N>\nu_{\alpha\beta}$,
	there exists $C_{\alpha\beta}>0$ such that
	\begin{align*}
		\sup_{x\in\R^d} |x^\alpha D^\beta g(x)| &\le \sup_{x\in\R^d}\left|\sum_{j=1}^N (f,\phi_j)\, x^\alpha D^\beta\phi_j(x)\right|
		\\
		&+\sup_{x\in\R^d}\left| \sum_{j\ge N+1}  (f,\phi_j)\, x^\alpha D^\beta\phi_j(x) \right|\le C_{\alpha\beta},
	\end{align*}
	that is, $g\in\Sc\subset L^2$. Since $g$ and $f$ have the same Fourier coefficients $(f,\phi_j)$, $j \in \N^\ast$,
	we conclude $f=g\in\Sc$, as claimed.
	
	\smallskip
	\noindent
	The proof is complete.
\end{proof}

	\section{Gevrey-periodic weighted Sobolev spaces on $\T^n\times\R^d$}\label{sec:torussgexp}
	\setcounter{equation}{0}

	Here we introduce our classes of Gevrey time-periodic Sobolev-Kato spaces and study their properties.
	The main results are characterisations by means of eigenfunction expansions generated by an elliptic, normal SG-operator,
	see below Theorem \ref{thm:reprF} and its corollaries, Theorem \ref{thm:unifconvert}, and Theorem \ref{thm:seq}.
	
	\subsection{Gevrey classes on the torus}\label{subs:gevreytorus}
	Let us begin recalling the standard characterisation of the Gevrey classes on the $n$-dimensional torus $\T^n$. Given $\eta>0$ and $\sigma \geq 1$, 
	$\mathcal{G}^{\sigma,\eta}=\mathcal{G}^{\sigma,\eta}(\T^n)$ denotes the space of all smooth functions $u \in \cinf(\T^n)$ 
	such that there exists $C=C_{\sigma\eta}>0$ for which 
	\[
		\sup_{t \in \T^n} |\partial^{\gamma}_t u(t)| \leq C \eta^{|\gamma|}(\gamma!)^{\sigma}, \quad \gamma \in \Z_+^n.
	\]
	The space $\mathcal{G}^{\sigma,\eta}$ is a Banach space endowed with the norm
	\[
		\|u\|_{\mathcal{G}^{\sigma,\eta}} = 
		\sup_{\gamma \in \Z_+^n}\left \{\sup_{t \in \T^n} \eta^{-|\gamma|}(\gamma!)^{-\sigma} |\partial_t^{\gamma}u(t)| \right\}.
	\]
	Then, the space of periodic Gevrey functions of order $\sigma$ is defined by
	\[
		\mathcal{G}^{\sigma}(\T^n) =\displaystyle \underset{\eta\rightarrow +\infty}{\mbox{ind} \lim} \;\mathcal{G}^{\sigma, \eta} (\T^n).
	\]
	The dual space $(\mathcal{G}^{\sigma}(\T^n))'$ is defined as the set of all linear maps 
	$\theta\colon \mathcal{G}^{\sigma}(\T^n) \to \C$ with the property that (cf. \cite{RodinoGevreyBook}) for every $\eta>0$ there exists $C=C_\eta>0$ such that,
	for any $u \in \mathcal{G}^{\sigma,\eta}$,
	\[
		|\langle \theta , u \rangle| \leq C
		\sup_{\gamma \in \Z_+^n}\left\{\sup_{t\in \T^n} \eta^{-|\gamma|} (\gamma!)^{-\sigma}
		|\partial^\gamma_t u(t)|\right\} = C \|u\|_{\mathcal{G}^{\sigma,\eta}}.
	\]
	\subsection{Inductive limits of periodic Gevrey-Sobolev-Kato-type Banach spaces and their duals}
	We now define families of Banach spaces of smooth maps on the torus, 
	taking values in Sobolev-Kato (or weighted Sobolev) spaces, and satisfying Gevrey-type estimates
	with respect to the variable $t\in\T^n$.  
	\begin{defn}\label{def:gscrr}
		Let $\sigma > 1$ and $r,\rho \in \R$ be fixed. Given a constant $C>0$ we denote by 
		\begin{equation}\label{GHsigma,C,r,rho}
		\gh^{C} \doteq \gh^{C}(\T^n \times \R^d)
		\end{equation}
		the space of all functions $u \in \mathscr{C}^{\infty}(\T^n; \mathscr{S}' (\mathbb{R}^d))$ such that 
		\begin{equation}\label{norm_H_r,rho,C}
			\|u\|_{\gh^{C}(\T^n \times \R^d)}=\|u\|_{\sigma,C,r,\rho}\doteq
			\sup _{\gamma \in \Z_+^n}
			\left\{ C^{-|\gamma|} (\gamma!)^{-\sigma} \sup _{t \in \T^n} \| \partial_t^\gamma u(t)\|_{H^{r,\rho}(\mathbb{R}^d)}\right\} < + \infty.
		\end{equation}
	\end{defn}
	\noindent	
If $u \in \gh^{C}$, then:
\begin{itemize}
	\item for any $\gamma \in \Z_+^n$ it is well defined the map 
	\begin{equation}\label{mapTtoH}
			\T^n \ni t \mapsto \partial_t^{\gamma} u(t) \in H^{r,\rho};
	\end{equation}
	in particular, $u(t) \in H^{r,\rho}$ for all $t\in \T^n$;
	
	\item for any $\gamma \in \Z_+^n$ we have 
	$$
 		\sup _{t \in \T^n} \| \partial_t^\gamma u(t)\|_{H^{r,\rho}} \leq C^{|\gamma|+1} (\gamma!)^{\sigma},
	$$
	implying a Gevrey estimate to \eqref{mapTtoH}.
\end{itemize}

We now investigate topological properties for these spaces. First, notice that $\gh^C$ is a Banach space endowed with the norm 
$\left\| \cdot \right\|_{\sigma, C, r,\rho}$  given by \eqref{norm_H_r,rho,C}. Moreover, we have the immediate inclusions
	\begin{equation}\label{sig<tho}
		\gh^C \subset \mathcal{G}^{\mu}\mathcal{H}^C_{r,\rho}, \ 1<\sigma \leq \mu,
	\end{equation}
	\begin{equation}\label{CleqD}
		\gh^C\subset \mathcal{G}^{\sigma}\mathcal{H}^{\widetilde{C}}_{r,\rho},  \  0<C\leq \widetilde{C},
	\end{equation}
	\begin{equation}\label{sig,C,sobolev}
		\mathcal{G}^{\sigma}\mathcal{H}^C_{r,\rho} \subset \mathcal{G}^{\sigma}\mathcal{H}^C_{t,\tau}, \  t \leq r \textrm{ and } \tau \leq \rho.
	\end{equation}
	For $\sigma>1$ and $r,\rho \in \R$ we define the spaces
	$$
	\mathcal{G}^{\sigma}\mathcal{H}_{r,\rho}=\mathcal{G}^{\sigma}\mathcal{H}_{r,\rho}(\T^n \times \R^d) \doteq \bigcup_{C>0} 
	\mathcal{G}^{\sigma}\mathcal{H}_{r,\rho}^C,
	$$
	and
	\begin{equation}\label{eq:defGHrrho}
    		\mathcal{G}\mathcal{H}_{r,\rho}=\mathcal{G}\mathcal{H}_{r,\rho}(\T^n \times \R^d) 
		\doteq  \bigcup_{\sigma>1} \mathcal{G}^{\sigma}\mathcal{H}_{r,\rho} = \bigcup_{\sigma>1} \left(\bigcup_{C>0} \gh^C \right),
	\end{equation}
	equipped  with the induced inductive limit topologies. In this way,
	$\mathcal{G}\mathcal{H}_{r,\rho}$ is an inductive limit of an inductive limit, which is (mildly) inconvenient. 
	However, in view of the next Theorem \ref{thm:LBspaces}, we may see it as an inductive limit of Banach spaces, that is, as an (LB)-space.
	\begin{thm}\label{thm:LBspaces}
		For each $r, \rho \in \R$ the equality 
		\begin{equation}\label{eq:GHrrho}
			\mathcal{G}\mathcal{H}_{r,\rho}(\T^n \times \R^d) = \bigcup_{\sigma>1} \mathcal{G}^{\sigma}\mathcal{H}_{r,\rho}^{\sigma-1}(\T^n \times \R^d),
		\end{equation}
		with the induced inductive limit topology in the right-hand side, holds true both among sets as well as topologically.
	\end{thm}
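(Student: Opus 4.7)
My plan is to prove the equality first at the level of sets, extract from the same estimate the continuity of the inclusions in both directions, and then conclude via the universal property of inductive limits that the two topologies coincide.

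For the set-theoretic identity, the inclusion $\supseteq$ in \eqref{eq:GHrrho} is immediate, since for each $\sigma > 1$ the space $\mathcal{G}^{\sigma}\mathcal{H}^{\sigma-1}_{r,\rho}$ is one of the summands in the union defining $\mathcal{GH}_{r,\rho}$. For the reverse inclusion, fix $u \in \mathcal{G}^{\sigma}\mathcal{H}^{C}_{r,\rho}$ for some $\sigma > 1$ and $C > 0$, and write $a_\gamma \doteq \sup_{t\in\T^n}\|\partial_t^\gamma u(t)\|_{H^{r,\rho}}$. For any $\sigma' > 1$ one has
\[
\frac{a_\gamma}{(\sigma'-1)^{|\gamma|}(\gamma!)^{\sigma'}}
= \frac{(C/(\sigma'-1))^{|\gamma|}}{(\gamma!)^{\sigma'-\sigma}}\cdot \frac{a_\gamma}{C^{|\gamma|}(\gamma!)^{\sigma}}.
\]
If I choose $\sigma' > \max\{\sigma,\,C+1\}$, then $\sigma'-\sigma>0$ and $C/(\sigma'-1)<1$; both factors in the prefactor are therefore bounded by $1$ for every $\gamma\in\Z_+^n$, with equality at $\gamma=0$. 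Taking the supremum over $\gamma$ yields the norm estimate
\[
\|u\|_{\sigma',\sigma'-1,r,\rho} \leq \|u\|_{\sigma,C,r,\rho},
\]
which both shows $u\in \mathcal{G}^{\sigma'}\mathcal{H}^{\sigma'-1}_{r,\rho}$ and that the corresponding inclusion map is a contraction.

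For the topological equivalence, denote by $X$ the inductive limit $\bigcup_{\sigma>1}\bigcup_{C>0}\mathcal{G}^{\sigma}\mathcal{H}^{C}_{r,\rho}$ and by $Y$ the inductive limit $\bigcup_{\sigma>1}\mathcal{G}^{\sigma}\mathcal{H}^{\sigma-1}_{r,\rho}$. By the universal property of inductive limits, it suffices to check that every Banach building block of each side injects continuously into the other. On one side, each $\mathcal{G}^{\sigma}\mathcal{H}^{\sigma-1}_{r,\rho}$ is already one of the spaces whose union defines $X$, so the tautological embedding into $X$ is continuous. On the other side, given any building block $\mathcal{G}^{\sigma}\mathcal{H}^{C}_{r,\rho}$ of $X$, the norm inequality just proved shows that the inclusion factors as
\[
\mathcal{G}^{\sigma}\mathcal{H}^{C}_{r,\rho} \hookrightarrow \mathcal{G}^{\sigma'}\mathcal{H}^{\sigma'-1}_{r,\rho} \hookrightarrow Y
\]
with continuous (indeed norm-decreasing) first arrow and continuous second arrow by definition of the inductive limit topology on $Y$. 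Hence the identity is a continuous bijection in both directions, so $X=Y$ as locally convex spaces.

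The only delicate point is clerical: $X$ is \emph{a priori} an inductive limit of inductive limits, and one must be sure that it coincides with the ordinary inductive limit of the doubly indexed family $\{\mathcal{G}^{\sigma}\mathcal{H}^{C}_{r,\rho}\}_{\sigma>1,\,C>0}$ equipped with the monotonicity inclusions \eqref{sig<tho}--\eqref{CleqD}. This is standard (iterated inductive limits of Hausdorff locally convex spaces collapse to the single inductive limit over the product directed set), but I would mention it explicitly to justify invoking the universal property in its usual form. Once that is acknowledged, the norm estimate above does all the work, and the main theorem drops out with no further computation.
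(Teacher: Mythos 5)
Your proof is correct, and it differs from the paper's in both halves of the argument in ways worth noting.

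For the set-theoretic inclusion, the paper performs a case split: if $C < \sigma-1$ it invokes \eqref{CleqD} to land in $\mathcal{G}^{\sigma}\mathcal{H}^{\sigma-1}_{r,\rho}$, and if $\sigma\le C+1$ it invokes \eqref{sig<tho} to land in $\mathcal{G}^{C+1}\mathcal{H}^{C}_{r,\rho}$. You replace this dichotomy by a single choice $\sigma'>\max\{\sigma,C+1\}$ and a one-line norm computation that directly yields the contraction $\|u\|_{\sigma',\sigma'-1,r,\rho}\le\|u\|_{\sigma,C,r,\rho}$. Since the monotonicity embeddings \eqref{sig<tho}--\eqref{CleqD} are themselves contractions, the paper's two-step composition gives the same bound, so the mathematical content is equivalent; yours is merely more compact and makes the quantitative estimate explicit at once.

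For the topological statement the divergence is more substantial. The paper argues via convergent sequences: a sequence tending to zero in one inductive limit tends to zero in the other. You instead invoke the universal property of the inductive limit, showing that each Banach building block of either side embeds continuously into the other inductive limit, and concluding that the identity is continuous in both directions. This is the cleaner and, strictly speaking, the safer route: equality of convergent sequences does not in general determine a locally convex topology, and making the paper's sequential argument airtight would require an appeal to additional structure (e.g.\ that these are (LB)-spaces, hence bornological, so continuity of the identity is detected on bounded sets or sequences). Your universal-property argument bypasses that issue entirely. One small remark: your stated worry about collapsing the iterated inductive limit to a single one over the product index set is in fact unnecessary for your own argument, since the universal property can be applied directly to the iterated limit by first testing on the inner building blocks $\mathcal{G}^{\sigma}\mathcal{H}^{C}_{r,\rho}$ and then on the intermediate spaces $\mathcal{G}^{\sigma}\mathcal{H}_{r,\rho}$; the factorization you exhibit already does this.
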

	
	\begin{proof}
		Since $ \mathcal{G}^{\sigma}\mathcal{H}_{r,\rho}^{\sigma-1} = \mathcal{G}^{\sigma}\mathcal{H}_{r,\rho}^{C_{\sigma}}$, 
		with $C_{\sigma} = \sigma -1 >0$, we see that the right-hand side of \eqref{eq:GHrrho}
		is a subset of the left-hand side of \eqref{eq:defGHrrho}.  Conversely, let $u \in \mathcal{G}\mathcal{H}_{r,\rho}$.
		Then, $u \in \mathcal{G}^{\sigma}\mathcal{H}_{r,\rho}^{C}$, for some $\sigma >1$ and $C>0$. If $C < \sigma -1$, then
		$$
		u \in \mathcal{G}^{\sigma}\mathcal{H}_{r,\rho}^{C} \subset 
		\mathcal{G}^{\sigma}\mathcal{H}_{r,\rho}^{\sigma -1},
		$$
		in view \eqref{CleqD}. On the other hand, if $\sigma \leq C+1$, we obtain
		$$
		u \in \mathcal{G}^{\sigma}\mathcal{H}_{r,\rho}^{C} \subset 
		\mathcal{G}^{C+1}\mathcal{H}_{r,\rho}^{C},
		$$
		in view of \eqref{sig<tho}. Hence, $u \in \mathcal{G}^{\delta}\mathcal{H}_{r,\rho}^{\delta-1}$, where 
		$\delta = C+1>1$. Therefore, we have the claimed equality as a set. 
		
		Now, let   $\{u_j\}$  be a convergent sequence in $\bigcup_{\sigma>1} \mathcal{G}^{\sigma}\mathcal{H}_{r,\rho}^{\sigma-1}$. Then, 
		$u_j \to 0$ in $\mathcal{G}^{\sigma}\mathcal{H}_{r,\rho}^{C_{\sigma}}$, with $C_{\sigma} = \sigma -1$, 
		implying the convergence in $\mathcal{G}\mathcal{H}_{r,\rho}$. 
		On the other hand, suppose now that $\{u_j\}$ is a convergent sequence 
		in $\mathcal{G}\mathcal{H}_{r,\rho}$ defined as in \eqref{eq:defGHrrho}, namely, there are $\sigma>1$ and $C>0$ such that
		$$
		\lim\limits_{j \to \infty}
		\left[
		\sup _{\gamma \in \Z_+^n} C^{-|\gamma|} (\gamma!)^{-\sigma} \sup _{t \in \T^n} \| \partial_t^\gamma u_j(t)\|_{H^{r,\rho}}\right] = 0.
		$$
		If $C \leq \sigma -1$, then
		$$
	   	u_j \in  \mathcal{G}^{\sigma}\mathcal{H}_{r,\rho}^{C}
		\subset \mathcal{G}^{\sigma}\mathcal{H}_{r,\rho}^{\sigma-1}, \ j \in \N^\ast,
		$$
		and
		\begin{align*}
			(\sigma-1)^{-|\gamma|} (\gamma!)^{-\sigma} \sup _{t \in \T^n} \| \partial_t^\gamma & u_j(t)\|_{H^{r,\rho}} \\ &
			\leq 
			C^{-|\gamma|} (\gamma!)^{-\sigma} \sup _{t \in \T^n} \| \partial_t^\gamma u_j(t)\|_{H^{r,\rho}},
		\end{align*}
		which implies
		$$
		\lim\limits_{j \to \infty}
		\left[
		\sup _{\gamma \in \Z_+^n} (\sigma-1)^{-|\gamma|} (\gamma!)^{-\sigma} 
		\sup _{t \in \T^n} \| \partial_t^\gamma u_j(t)\|_{H^{r,\rho}(\mathbb{R}^d)}\right] = 0.
		$$
		If $\sigma < C+1$, then 
		$$
		u_j \in  \mathcal{G}^{\sigma}\mathcal{H}_{r,\rho}^{C}
		\subset \mathcal{G}^{C+1}\mathcal{H}_{r,\rho}^{C}, \ j \in\N^\ast, 
		$$
		and
		\begin{align*}
			C^{-|\gamma|} (\gamma!)^{-(C+1)} \sup _{t \in \T^n} \| \partial_t^\gamma & u_j(t)\|_{H^{r,\rho}} \\ &
			\leq 
			C^{-|\gamma|} (\gamma!)^{-\sigma} \sup _{t \in \T^n} \| \partial_t^\gamma u_j(t)\|_{H^{r,\rho}},
		\end{align*}
		which implies 
		$$
		\lim\limits_{j \to \infty}
		\left[
		\sup _{\gamma \in \Z_+^n} C^{-|\gamma|} (\gamma!)^{-(C+1)} \sup _{t \in \T^n} \| \partial_t^\gamma u_j(t)\|_{H^{r,\rho}(\mathbb{R}^d)}\right] = 0.
		$$
		Therefore, $u_j \to 0$ in $\bigcup_{\sigma>1} \mathcal{G}^{\sigma}\mathcal{H}_{r,\rho}^{\sigma-1}$, as desired. The proof is complete. 
	\end{proof}
	In view of Theorem \ref{thm:LBspaces}, from now on we consider $\mathcal{G}\mathcal{H}_{r,\rho}$ equipped with 
	the (equivalent) inductive limit topology defined by \eqref{eq:GHrrho}. 
	In particular, we denote by 
	$\mathcal{G}\mathcal{H}_{r,\rho}'=\mathcal{G}\mathcal{H}_{r,\rho}'(\T^n \times \R^d)=(\mathcal{G}\mathcal{H}_{r,\rho}(\T^n \times \R^d))'$ 
	the space of all linear and continuous maps 
	$\theta\colon \mathcal{G}\mathcal{H}_{r,\rho} \to \C$. Then, for every $\sigma>1$, there exists $A=A_{ \sigma}>0$ such that 
	$$
	|\langle \theta  , u \rangle| \leq A
	\sup_{\gamma \in \Z^n_+}  \left\{(\sigma-1)^{-|\gamma|}(\gamma!)^{-\sigma}\sup _{t \in \T^n}\| \partial_t^\gamma u(t)\|_{H^{r,\rho}}\right\},
	$$
	for every 
	$u \in \mathcal{G}^{\sigma}\mathcal{H}_{r,\rho}^{\sigma-1}$, 
	which is equivalent to the assertion that, for every $\sigma >1, C>0$, there exists $B=B_{\sigma C}>0$ such that 
	$$
	|\langle \theta  , u \rangle| \leq B
	\sup_{\gamma \in \Z^n_+}  \left\{C^{-|\gamma|}(\gamma!)^{-\sigma}\sup _{t \in \T^n}\| \partial_t^\gamma u(t)\|_{H^{r,\rho}}\right\},
	$$
	for every $u \in \mathcal{G}^{\sigma}\mathcal{H}_{r,\rho}^{C}$. Therefore, the next Proposition \ref{prop:charGHrrho} holds true.
	\begin{prop}\label{prop:charGHrrho}
		A linear functional $\theta\colon \mathcal{G}\mathcal{H}_{r,\rho}(\T^n \times \R^d) \to \C$ is an element of $\mathcal{G}\mathcal{H}_{r,\rho}'(\T^n \times \R^d)$ 
		if and only if, for every $\sigma >1, C>0$, there exists $B=B_{\sigma C}>0$ such that, for any 
		$u \in \mathcal{G}^{\sigma}\mathcal{H}_{r,\rho}^{C}(\T^n \times \R^d)$,
		$$
		|\langle \theta  ,  u \rangle| \leq B
		\sup_{\gamma \in \Z^n_+}\left\{  C^{-|\gamma|}(\gamma!)^{-\sigma}\sup _{t \in \T^n}\| \partial_t^\gamma u(t)\|_{H^{r,\rho}(\mathbb{R}^d)}\right\}\!.
		$$
		\end{prop}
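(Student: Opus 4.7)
The plan is to prove both directions of the equivalence by reducing it to the definition of continuity on an (LB)-space, then translating between the one-parameter family of Banach spaces $\mathcal{G}^{\sigma}\mathcal{H}^{\sigma-1}_{r,\rho}$ (which, by Theorem \ref{thm:LBspaces}, exhausts $\mathcal{G}\mathcal{H}_{r,\rho}$) and the two-parameter family $\mathcal{G}^{\sigma}\mathcal{H}^{C}_{r,\rho}$ appearing in the statement. Since the author has already derived the intermediate one-parameter estimate in the text immediately preceding the proposition, what remains is essentially the upgrade from the diagonal choice $C=\sigma-1$ to an arbitrary pair $(\sigma,C)$.

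First I would prove the easy implication: if the two-parameter estimate holds for every $(\sigma,C)$, then specializing to $C=\sigma-1$ gives continuity on each Banach step $\mathcal{G}^{\sigma}\mathcal{H}^{\sigma-1}_{r,\rho}$, which by the universal property of the inductive limit (and Theorem \ref{thm:LBspaces}) is exactly continuity of $\theta$ on $\mathcal{G}\mathcal{H}_{r,\rho}$.

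For the converse, assume $\theta\in\mathcal{G}\mathcal{H}_{r,\rho}'$. Then the one-parameter estimate
\[
	|\langle\theta,u\rangle|\le A_{\tau}\,\|u\|_{\tau,\tau-1,r,\rho},\quad u\in \mathcal{G}^{\tau}\mathcal{H}^{\tau-1}_{r,\rho},
\]
holds for every $\tau>1$. Given arbitrary $\sigma>1$ and $C>0$, I would split into two cases exactly as in the proof of Theorem \ref{thm:LBspaces}. If $C\le\sigma-1$, then \eqref{CleqD} yields $\mathcal{G}^{\sigma}\mathcal{H}^{C}_{r,\rho}\subset\mathcal{G}^{\sigma}\mathcal{H}^{\sigma-1}_{r,\rho}$, and since $C^{-|\gamma|}\ge(\sigma-1)^{-|\gamma|}$ one has $\|u\|_{\sigma,\sigma-1,r,\rho}\le\|u\|_{\sigma,C,r,\rho}$, so $B_{\sigma C}=A_{\sigma}$ works. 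If instead $C>\sigma-1$, set $\tau=C+1>\sigma$, so that $C=\tau-1$; by \eqref{sig<tho} we have $\mathcal{G}^{\sigma}\mathcal{H}^{C}_{r,\rho}\subset\mathcal{G}^{\tau}\mathcal{H}^{\tau-1}_{r,\rho}$, and since $\tau>\sigma$ gives $(\gamma!)^{-\tau}\le(\gamma!)^{-\sigma}$ for every $\gamma\in\Z_+^n$, we obtain $\|u\|_{\tau,\tau-1,r,\rho}=\|u\|_{\tau,C,r,\rho}\le\|u\|_{\sigma,C,r,\rho}$, so $B_{\sigma C}=A_{C+1}$ suffices.

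The argument is essentially bookkeeping with the monotonicity relations \eqref{sig<tho}--\eqref{CleqD} once continuity on the (LB)-space is identified with continuity on each Banach step. The only point that requires a moment of care is the case distinction ensuring that for every prescribed $(\sigma,C)$ the relevant two-parameter ball sits inside one of the diagonal Banach spaces $\mathcal{G}^{\tau}\mathcal{H}^{\tau-1}_{r,\rho}$ with a \emph{comparable} norm; I do not foresee any genuine obstacle, as both comparisons reduce to the fact that $\gamma!\ge1$ and to the trivial inequality $C^{-|\gamma|}\ge\widetilde{C}^{-|\gamma|}$ when $C\le\widetilde{C}$.
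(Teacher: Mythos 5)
Your proof is correct and takes essentially the same approach the paper has in mind: the paper's discussion immediately preceding Proposition \ref{prop:charGHrrho} simply asserts the equivalence between the diagonal estimate on $\mathcal{G}^\sigma\mathcal{H}^{\sigma-1}_{r,\rho}$ and the two-parameter estimate on $\mathcal{G}^\sigma\mathcal{H}^{C}_{r,\rho}$, and your case split on $C \lessgtr \sigma-1$ with the monotonicity inequalities $C^{-|\gamma|}\ge(\sigma-1)^{-|\gamma|}$ and $(\gamma!)^{-\tau}\le(\gamma!)^{-\sigma}$ is precisely the bookkeeping (mirroring the proof of Theorem \ref{thm:LBspaces}) that the authors leave implicit.
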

	We can actually describe $\mathcal{G}\mathcal{H}_{r,\rho}'$ as a projective limit, dual to the inductive limit definition \eqref{eq:GHrrho} 
	of $\mathcal{G}\mathcal{H}_{r,\rho}$.
	\begin{thm}\label{Thm_dualGHr,rho}
		For each $r, \rho \in \R$ we have
		$$
		\mathcal{G}\mathcal{H}_{r,\rho}'(\T^n \times \R^d) =  \bigcap_{\sigma>1} 
		(\mathcal{G}^{\sigma}\mathcal{H}_{r,\rho}^{\sigma-1}(\T^n \times \R^d))',
		$$		
		equipped with the projective limit topology.
	\end{thm}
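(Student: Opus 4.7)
The plan is to realize the theorem as the classical duality between an inductive limit of Banach spaces and the projective limit of their Banach duals. By Theorem \ref{thm:LBspaces} we identify $\mathcal{G}\mathcal{H}_{r,\rho}$ with the (LB)-space $\bigcup_{\sigma > 1} X_\sigma$, where I write $X_\sigma$ for $\mathcal{G}^\sigma \mathcal{H}_{r,\rho}^{\sigma-1}$. The inclusions \eqref{sig<tho} and \eqref{CleqD} imply that $X_\sigma \hookrightarrow X_{\sigma'}$ continuously whenever $1 < \sigma \le \sigma'$, so $(X_\sigma)_{\sigma > 1}$ is a directed family of Banach spaces whose inductive limit is $\mathcal{G}\mathcal{H}_{r,\rho}$.

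For the set-theoretic inclusion $\mathcal{G}\mathcal{H}_{r,\rho}' \subseteq \bigcap_{\sigma > 1} X_\sigma'$ I would note that each canonical injection $\iota_\sigma\colon X_\sigma \hookrightarrow \mathcal{G}\mathcal{H}_{r,\rho}$ is continuous by the universal property of the inductive limit, so for any $\theta \in \mathcal{G}\mathcal{H}_{r,\rho}'$ the restriction $\theta \circ \iota_\sigma$ lies in $X_\sigma'$, and these restrictions are automatically coherent (compatible with the inclusions $X_\sigma \hookrightarrow X_{\sigma'}$) since they all come from a single functional on $\mathcal{G}\mathcal{H}_{r,\rho}$. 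For the reverse inclusion, given a coherent family $(\theta_\sigma)_{\sigma>1}$ with $\theta_\sigma \in X_\sigma'$ and $\theta_{\sigma'}|_{X_\sigma} = \theta_\sigma$ for $\sigma \le \sigma'$, I would glue them into a single linear functional $\theta$ on $\mathcal{G}\mathcal{H}_{r,\rho}$ by setting $\theta(u) = \theta_\sigma(u)$ for any $\sigma$ such that $u \in X_\sigma$; coherence ensures well-definedness, and the universal property of the inductive limit topology then yields continuity of $\theta$ on $\mathcal{G}\mathcal{H}_{r,\rho}$. Proposition \ref{prop:charGHrrho} furnishes a convenient shortcut here, since the estimate characterising $\mathcal{G}\mathcal{H}_{r,\rho}'$ stated there is precisely the one equivalent to $\theta|_{X_\sigma} \in X_\sigma'$ for every $\sigma > 1$.

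For the topological part, the projective limit topology on $\bigcap_\sigma X_\sigma'$ is by definition the coarsest locally convex topology making each restriction map $\theta \mapsto \theta|_{X_\sigma}$ continuous into the Banach dual $X_\sigma'$ with its natural norm. One direction is straightforward: the seminorms $\theta \mapsto \|\theta|_{X_\sigma}\|_{X_\sigma'}$ are automatically continuous on $\mathcal{G}\mathcal{H}_{r,\rho}'$ equipped with its dual topology. The main obstacle is the reverse direction, showing that every continuous seminorm on $\mathcal{G}\mathcal{H}_{r,\rho}'$ is dominated by a finite combination of the above. This reduces to the regularity of the (LB)-space $\mathcal{G}\mathcal{H}_{r,\rho}$, i.e.\ the fact that every bounded subset is contained and bounded in a single step $X_\sigma$, which I would extract from the scaling inclusions \eqref{sig<tho}--\eqref{CleqD} already exploited in the proof of Theorem \ref{thm:LBspaces}. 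Combining this regularity with the standard result that the strong dual of a regular (LB)-space of Banach spaces coincides with the projective limit of the strong duals of its constituent steps then yields the claimed topological identification.
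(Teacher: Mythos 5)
Your set-theoretic argument is correct and coincides in substance with the paper's: both directions rest on Proposition \ref{prop:charGHrrho}, which translates continuity on the inductive limit $\mathcal{G}\mathcal{H}_{r,\rho}$ into the simultaneous membership $\theta\in(\mathcal{G}^{\sigma}\mathcal{H}_{r,\rho}^{\sigma-1})'$ for all $\sigma>1$; your phrasing in terms of coherent families of functionals and the universal property of $\mathrm{ind}\lim$ is just a more abstract packaging of the same computation. Where you genuinely diverge is the final paragraph on topology. The paper's proof establishes only the set equality and then \emph{declares} the projective limit topology on the right-hand side, effectively using the identity as a definition of the topology carried by $\mathcal{G}\mathcal{H}_{r,\rho}'$; nothing in the paper's argument compares this with the strong (or any other) dual topology. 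You instead attempt to prove that the projective limit topology agrees with the strong dual topology, which is a strictly stronger assertion. That step hinges on the regularity of the inductive spectrum $(\mathcal{G}^{\sigma}\mathcal{H}_{r,\rho}^{\sigma-1})_{\sigma>1}$ (every bounded set sits boundedly in one step), which you invoke but do not prove -- and it is not a formal consequence of the inclusions \eqref{sig<tho}--\eqref{CleqD} alone, since those only give the directed system, not the boundedness retraction. You would also want to observe that the index set $\{\sigma>1\}$ is uncountable, so to apply the classical (LB)-space duality result (Köthe, Grothendieck) you should first pass to a countable cofinal subfamily, e.g.\ $\sigma_k=k+1$, which \eqref{sig<tho} and \eqref{CleqD} permit. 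None of this contradicts the paper, but if you want the topological claim in your sense, the regularity of the spectrum needs an actual argument -- the paper simply does not go there.
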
 
	
	\begin{proof}
		If $\theta \in \mathcal{G}\mathcal{H}_{r,\rho}'$, then 
		$\theta$ is well defined as linear map on $\mathcal{G}^{\sigma}\mathcal{H}_{r,\rho}$, for each $\sigma>1$. 
		Hence, for $\sigma>1$ and $C>0$ there is  $B=B_{\sigma C}>0$ such that 
		$$
		|\langle \theta  ,  u \rangle| \leq B
		\sup_{\gamma \in \Z^n_+} \left\{ C^{-|\gamma|}(\gamma!)^{-\sigma}\sup _{t \in \T^n}\| \partial_t^\gamma u(t)\|_{H^{r,\rho}}\right\},
		$$
		for all $u \in \mathcal{G}^{\sigma}\mathcal{H}_{r,\rho}^{C}$.  This shows, in particular, that the restriction of $\theta$ to 
		$\mathcal{G}^{\sigma}\mathcal{H}_{r,\rho}^C$ belongs to 
		$(\mathcal{G}^{\sigma}\mathcal{H}_{r,\rho}^{\sigma-1})'$.
		
		Conversely, let $\theta \in \bigcap_{\sigma>1} 
		(\mathcal{G}^{\sigma}\mathcal{H}_{r,\rho}^{\sigma-1})'$. Then, for each $\sigma>1$ we have 
		$\theta \in (\mathcal{G}^{\sigma}\mathcal{H}_{r,\rho}^{\sigma-1})'$ and 
		$\theta$ is well defined on $\mathcal{G}\mathcal{H}_{r,\rho}$. Moreover, if 
		$u, v\in\mathcal{G}\mathcal{H}_{r,\rho}$, there is
		$\delta$ such that $u,v \in \mathcal{G}^{\delta}\mathcal{G}\mathcal{H}_{r,\rho}$ and for every
		$\alpha \in \C$ we have
		$$
		\langle \theta \, , \, \alpha u + v\rangle = 
		\alpha \langle \theta \, , \, u \rangle +
		\langle \theta \, , \,  v\rangle.
		$$
		Then, $\theta$ is linear on $\mathcal{G}\mathcal{H}_{r,\rho}$. Also,
		given $\sigma>1$ and $C>0$, there is $B=B_{\sigma C}>0$ satisfying
		$$
		|\langle \theta \, , \, u \rangle| \leq B
		\sup_{\gamma \in \Z^n_+}\left\{  C^{-|\gamma|}(\gamma!)^{-\sigma}\sup _{t \in \T^n}\| \partial_t^\gamma u(t)\|_{H^{r,\rho}}\right\},
		\quad u \in \mathcal{G}^{\sigma}\mathcal{H}_{r,\rho}^{C}. 
		$$
		Therefore,  $\theta \in \mathcal{G}\mathcal{H}_{r,\rho}'$.
	\end{proof}

	\subsection{The spaces $\Fc=\Fc(\mathbb T ^n\times \R^d)$ and $\Fc'=(\Fc(\mathbb T ^n\times \R^d))'$} 
	We observe that, in view of \eqref{sig,C,sobolev}, we have
	$$
	\mathcal{G}\mathcal{H}_{r,\rho} \subset \mathcal{G}\mathcal{H}_{t,\tau}, 
	$$ 
	whenever $t \leq r$ and $\tau \leq \rho$.  Then, we may consider the space
	\begin{equation}\label{eq:defF}
		\Fc = \Fc(\T ^n\times \R^d) \doteq \bigcap_{r,\rho \in \R} \mathcal{G}\mathcal{H}_{r,\rho}(\T ^n\times \R^d),
	\end{equation}
	endowed with the projective limit topology.
	In particular, $f_j \to 0$, $j\to\infty$, in $\Fc$ if, and only if, for each $r, \rho \in \R$ there is $\sigma= \sigma_{r\rho} >1$ such that
	\[
		\lim\limits_{j \to \infty} f_j = 0  \ \textrm{ in } \ 
		\mathcal{G}^{\sigma}\mathcal{H}_{r,\rho}.
	\]
	By  $\Fc'$ we denote the space of all linear continuous functionals $\theta\colon \Fc \to \C$. It follows, by \cite[$(6)$, page 290]{Kothe}, that
	\begin{equation}\label{F'=UH'}
		\Fc'=\Fc'(\T ^n\times \R^d) = \bigcup_{r,\rho \in \R} \mathcal{G}\mathcal{H}_{r,\rho}'(\T ^n\times \R^d),
	\end{equation}	
	endowed with the inductive limit topology.
	
	\subsection{Eigenfunctions expansions in $\Fc$}
	In this section we characterise the elements of the space $\Fc$ in terms of the behaviour of their
	($t$-dependent) coefficients of the eigenfunctions expansion generated by a suitable SG-operator.
	As in the previous Section \ref{sec:eigenexp}, here we will often denote by
	$P\in \Op(S^{m,\mu})$ a normal, elliptic SG-operator with order components $m, \mu>0$,
	and by $\left\{\phi_j\right\}_{j \in \mathbb{N}} \subset \mathscr{S}$
	the associated orthonormal basis of eigenfunctions, with eigenvalues $\{\lambda_j\}_{j \in \N^\ast}$. 
	
	\begin{rem}\label{rem:polyboundlambda}
		Notice that if we also assume $P\in\Op(SG_\mathrm{cl}^{m,\mu})$, self-adjoint and positive,
		both in the case $m\not=\mu$ and in the case $m=\mu$, we have that,
		for a suitable choice of $\varrho, \varrho' >0$ and constants $K,K'>0$
		$$
		K' j^{\varrho'} \leq |\lambda_{j}| = \lambda_j \leq K j^{\varrho}, \ j \to \infty.
		$$
		Moreover, $\varrho, \varrho'$ can be chosen such that $\varrho -  \varrho' \leq \epsilon$, for any $\epsilon >0$ (cf. Appendix \ref{subs:sgasym}).
	\end{rem}
	If $f \in \Fc$, then for all $r,\rho \in \R$ there are $\sigma=\sigma_{r\rho}>1$ and $C=C_{r\rho}>0$ such that
	$$
	\|f\|_{r,\rho,\sigma,C}\doteq\sup _{\gamma \in \Z_+^n}\left\{ C^{-|\gamma|} (\gamma!)^{-\sigma} \sup _{t \in \T^n} \| \partial_t^\gamma f(t)\|_{H^{r,\rho}}\right\} 
	< + \infty.
	$$
	Since, for any $\gamma\in\Z_+^n$,
	$$
	\T^n \ni t \mapsto \partial_t^{\gamma}f(t) \in \bigcap_{r,\rho \in \R} H^{r,\rho} = \Sc,
	$$	
	we see that $f \in  \mathscr{C}^{\infty}(\T^n; \Sc)$ and for all  $r,\rho \in \R$ there are $\sigma>1$ and $C>0$ as above 
	such that, for all $\gamma \in \Z_+^n$,
	$$
	\sup _{t \in \T^n} \| \partial_t^\gamma f(t)\|_{H^{r,\rho}} \leq C^{|\gamma|+1} (\gamma!)^{\sigma}.
	$$	
	Given $f \in \Fc$, we then have, for any $t\in\T^n$, the 
	Schwartz function $f(t)\colon \R^d \to \C$. Its Fourier coefficients $f_j: \T^n \to \C$, given by
	\begin{equation}\label{partial_in_F}
		f_j(t)  \doteq (f(t), \phi_j), \quad j \in \N^\ast,
	\end{equation}
	are well-defined, we actually have $f_j \in \cinf(\T^n)$ and, for any $\gamma \in \Z_+^n$,
	\begin{equation}\label{eq:fgammaj}
		f^\gamma_j(t) \doteq (\partial_t^\gamma  f(t), \phi_j) = \partial_t^\gamma f_j(t).
	\end{equation}
	Indeed, denoting by $\nnorm{\cdot}_N$, $N=0,1,\dots$, the $N$-th seminorm of $\Sc$, namely,
	\[
		\nnorm{g}_N=\sum_{|\alpha+\beta|\le N}\sup_{x\in\R^d}|x^\alpha\partial^\beta g(x)|, \quad g\in\Sc,
	\]
	for any $f\in\Fc$, $\gamma\in\Z_+^n$, $N=0,1,\dots$, we of course have
	\[
		\nnorm{\partial_t^\gamma f(t)}_N\in\mathscr{C}(\T^n)\Rightarrow \sup_{t\in\T^n}\nnorm{\partial_t^\gamma f(t)}_N
		=\max_{t\in\T^n}\nnorm{\partial_t^\gamma f(t)}_N=S^\gamma_N<\infty.
	\]
	Then, 
	\[
		|\partial^\gamma_t f(t,x)| \le S^\gamma_{2N}(1+|x|^2)^{-N},
		\quad f\in\Fc, \gamma\in\Z_+^n, N\in\N, t\in\T^n, x\in\R^d,
	\]
	and \eqref{eq:fgammaj} follows by dominated convergence. We have shown, for any $f\in\Fc$, $\gamma\in\Z_+^n$, $t\in\T^n$,
	\begin{equation}\label{eq:tder}
		\partial_t^\gamma f(t) = \partial_t^\gamma\sum_{j \in \N^\ast}f_j(t)\phi_j = \sum_{j \in \N^\ast}f^\gamma_j(t)\phi_j =  \sum_{j \in \N^\ast}\partial_t^\gamma f_j(t)\phi_j,
	\end{equation}

	The next Theorem \ref{thm:reprF}, together with its Corollaries \ref{cor:estpow}, \ref{cor:seq}, \ref{cor:suffhypo}, is our third main result.
	\begin{thm}\label{thm:reprF}
		Let $P \in \mathrm{Op}(S^{m,\mu}(\R^d))$ 
		be a normal, elliptic SG-operator with order components $m,\mu>0$. Then $f \in \Fc(\T ^n\times \R^d)$ if and only if it can be represented as    
		\[
			f(t)=\sum_{j \in \N^\ast} f_j (t) \phi_j
		\]
		with $f_j(t)$, $j \in \N^\ast$, defined by \eqref{partial_in_F} satisfying the condition
		\begin{equation}\label{eq:s}
			\tag{*}
			\hspace*{-3mm}
			\begin{aligned}
				\text{for }& \text{every $M \in \N$ there exist $\sigma\!=\!\sigma_M\!>\!1$ and $C\!=\!C_M\!>\!0$ such that}
				\\
				&\sup_{t \in \T^n}  \sum_{j \in \N^\ast} \widetilde{\lambda}_j^{2M} |\partial_t^\gamma f_j(t)|^2
				=
				\sup_{t \in \T^n}  \sum_{j \in \N^\ast} \widetilde{\lambda}_j^{2M} |f^\gamma_j(t)|^2 \leq C^{2(|\gamma|+1)} (\gamma!)^{2 \sigma} ,
				\\
				\text{for }&\text{every $\gamma \in \Z_+^n$}.
			\end{aligned}
		\end{equation}
	\end{thm}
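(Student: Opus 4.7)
\medskip
\noindent\textbf{Proof proposal.} The plan is to reduce everything to the spectral norm equivalence furnished by Theorem \ref{thm:exp_Hmmu} and Remark \ref{rem:normphij}, together with the inclusion relations among Sobolev-Kato spaces, and to prove the two implications separately.

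For the direct implication, assume $f\in\Fc$. Fix $M\in\N$ arbitrary. Since $\Fc=\bigcap_{r,\rho\in\R}\gh_{r,\rho}$, in particular $f\in\gh_{Mm,M\mu}$, so there exist $\sigma=\sigma_M>1$ and $C_0=C_0(M)>0$ such that
\[
    \sup_{t\in\T^n}\|\partial_t^\gamma f(t)\|_{H^{Mm,M\mu}}\le C_0^{|\gamma|+1}(\gamma!)^\sigma,\qquad\gamma\in\Z_+^n.
\]
Applying Theorem \ref{thm:exp_Hmmu} (for $r=M\in\N$) to the temperate distribution $\partial_t^\gamma f(t)$, whose Fourier coefficients are exactly the $f_j^\gamma(t)=\partial_t^\gamma f_j(t)$ by \eqref{eq:tder}, yields
\[
    \sum_{j\in\N^\ast}\widetilde{\lambda}_j^{\,2M}\,|\partial_t^\gamma f_j(t)|^2\asymp\|\partial_t^\gamma f(t)\|_{H^{Mm,M\mu}}^2,
\]
with the implicit constants depending only on $P,M,d,m,\mu$. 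Taking supremum in $t$ and absorbing the equivalence constant into a new $C=C_M$ yields condition $(\ast)$.

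For the converse, assume that the coefficients $f_j(t)\in\cinf(\T^n)$ satisfy $(\ast)$, and \emph{define} $f(t)=\sum_{j\in\N^\ast}f_j(t)\phi_j$. First I would justify the representation: for each fixed $M\in\N$ the $\gamma=0$ instance of $(\ast)$, combined with Theorem \ref{thm:exp_Hmmu}, shows that the partial sums $\sum_{j\le N}f_j(t)\phi_j$ form a Cauchy sequence in $H^{Mm,M\mu}$, uniformly in $t\in\T^n$; letting $M\to\infty$ and invoking $\Sc=\bigcap_{r,\rho}H^{r,\rho}$ proves that the series converges in $\Sc$ uniformly in $t$. The same argument, applied to the majorants on $|\partial_t^\gamma f_j(t)|$, legitimises the term-by-term $t$-differentiation
\[
    \partial_t^\gamma f(t)=\sum_{j\in\N^\ast}\partial_t^\gamma f_j(t)\,\phi_j\in\Sc,\qquad\gamma\in\Z_+^n,\ t\in\T^n,
\]
so that $f\in\cinf(\T^n;\Sc)$ and its Fourier coefficients are precisely the prescribed $f_j$.

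It then remains to show that $f\in\gh_{r,\rho}$ for every $r,\rho\in\R$. Given such a pair, I pick an integer $M\ge 1$ with $Mm\ge r$ and $M\mu\ge\rho$; by the continuous embedding $H^{Mm,M\mu}\hookrightarrow H^{r,\rho}$ (recalled in Section \ref{sec:intro}) and Theorem \ref{thm:exp_Hmmu},
\[
    \|\partial_t^\gamma f(t)\|_{H^{r,\rho}}^2\lesssim\|\partial_t^\gamma f(t)\|_{H^{Mm,M\mu}}^2\asymp\sum_{j\in\N^\ast}\widetilde{\lambda}_j^{\,2M}|\partial_t^\gamma f_j(t)|^2,
\]
and the right-hand side is bounded by $C_M^{2(|\gamma|+1)}(\gamma!)^{2\sigma_M}$ uniformly in $t$, by hypothesis $(\ast)$. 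Taking the square root and enlarging the constant produces the required Gevrey bound for $f$ in $\mathcal{G}^{\sigma_M}\mathcal{H}_{r,\rho}^{C'_M}\subset\gh_{r,\rho}$, completing the argument.

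The main obstacle is the step requiring careful justification: the passage from the coefficient estimates $(\ast)$ to a \emph{bona fide} element $f\in\cinf(\T^n;\Sc)$ for which $\partial_t^\gamma$ commutes with the eigenfunction sum. This is where one must exploit $(\ast)$ uniformly in $\gamma$ and in $j$, combine it with the uniform convergence of the partial sums in each $H^{Mm,M\mu}$, and appeal to $\Sc=\bigcap_{r,\rho}H^{r,\rho}$; the rest of the proof is a mechanical translation via Theorem \ref{thm:exp_Hmmu} and the inclusion $H^{Mm,M\mu}\hookrightarrow H^{r,\rho}$ for $M$ large.
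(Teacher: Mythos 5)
Your proof is correct and follows essentially the same route as the paper's: both directions hinge on translating the Sobolev--Kato norm $\|\partial_t^\gamma f(t)\|_{H^{Mm,M\mu}}$ into the spectral sum $\sum_j \widetilde{\lambda}_j^{2M}|f_j^\gamma(t)|^2$ via Theorem~\ref{thm:exp_Hmmu} (equivalently, Proposition~\ref{prop:eqnorws} and $\widetilde{P}^M = P^M + P_0$), combined with the identity $\partial_t^\gamma f_j = f_j^\gamma$ from \eqref{eq:tder} and the definition $\Fc = \bigcap_{M}\mathcal{G}\mathcal{H}_{Mm,M\mu}$. The paper presents this as a single chain of biconditionals, whereas you split the two implications and, in the converse, take the extra care of reconstructing $f=\sum_j f_j\phi_j$ as a genuine element of $\cinf(\T^n;\Sc)$ with term-by-term $t$-differentiation justified by the uniform-in-$t$ tail control coming from $(\ast)$ at one higher power of $M$; this is a point the paper's proof glosses over, so your elaboration is welcome even though the underlying mechanism is the same.
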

	\begin{proof}
		Let $f \in \Fc$. By \eqref{eq:tder}, for any $\gamma\in\Z_+^n$ we can write 
		\[
			\partial_t^\gamma f(t)=\sum_{j \in \N^\ast} f_j^\gamma(t) \phi_j,
		\]
		with $f^\gamma_j(t)$ in \eqref{eq:fgammaj}.
		Moreover, denoting by $P_0\in\Op(S^{-\infty,-\infty})$ the projection on $\ker P$ by the definition of $\Fc$ and Corollary \ref{cor:schw},
		\begin{align*}
			f \in \mathscr F \iff & \forall M \in \N \; f \in \mathcal{G}\mathcal{H}_{Mm,M\mu} \\
			\iff & \forall M \in \N\, \exists \sigma=\sigma_M>1, C=C_M>0 \text{ such that } \forall \gamma \in \Z_+^n\\
			& \sup_{t \in \T^n} \| \partial^\gamma_t f(t) \|_{H^{Mm,M\mu}}^2 \leq C^{2(|\gamma|+1)} (\gamma!)^{2 \sigma} \\
			\iff & \forall M \in \N\, \exists \sigma>1, C>0 \text{ such that } \forall \gamma \in \Z_+^n\\
			& \sup_{t \in \T^n} \| (P+P_0)^M \, \partial^\gamma_t f(t) \|_{L^2}^2 \leq C^{2(|\gamma|+1)} (\gamma!)^{2 \sigma} \\
			\iff & \forall M \in \N\,\exists \sigma>1, C>0 \text{ such that } \forall \gamma \in \Z_+^n\\
			& \sup_{t \in \T^n} \left[ \| P^M \, \partial ^\gamma_t f(t) + P_0^M \, \partial ^\gamma_t f(t) \|_{L^2}^2 \right] 
			\leq C^{2(|\gamma|+1)} (\gamma!)^{2 \sigma}\\
			\iff & \forall M \in \N\, \exists \sigma>1, C>0 \text{ such that } \forall \gamma \in \Z_+^n\\
			& \sup_{t \in \T^n} \left[ \left\|\sum_{j \ge N+1}   f_j^\gamma(t) P^M\phi_j \right\|_{L^2}^2 + 
			\left\|\sum_{j=1}^N  f_j^\gamma(t) P_0^M\phi_j \right\|_{L^2}^2 \right]\\
			&\leq C^{2(|\gamma|+1)} (\gamma!)^{2 \sigma},
		\end{align*}
		which yields
		\begin{align*}
			f \in \Fc\iff & \forall M \in \N\, \exists \sigma>1, C>0 \text{ such that } \forall \gamma \in \Z_+^n \\
			& \sup_{t \in \T^n} \left[ \sum_{j \ge N+1} |\lambda_j|^{2M} |f_j^\gamma(t)|^2  + 
			\sum_{j=1}^N |f_j^\gamma(t)|^2 \right] \leq C^{2(|\gamma|+1)} (\gamma!)^{2 \sigma} \\
			\iff & \forall M \in \N\, \exists \sigma>1, C>0 \text{ such that } \forall \gamma \in \Z_+^n \\
			& \sup_{t \in \T^n} \sum_{j \in\N^\ast} |\widetilde{\lambda}_j|^{2M} |f_j^\gamma(t)|^2  \leq C^{2(|\gamma|+1)} (\gamma!)^{2 \sigma}, 
		\end{align*}
		as claimed.
	\end{proof}
	
	\begin{cor}\label{cor:estpow}
		Let $P\in\Op(S^{m,\mu}_\mathrm{cl}(\R^d))$ be a self-adjoint, positive, elliptic SG-operator with order components $m,\mu>0$.     
 		In this case, condition \eqref{eq:s} in Theorem \ref{thm:reprF} is equivalent to the condition
		\begin{equation}\label{eq:sclsa}
			\tag{**}
			\hspace*{-3mm}
			\begin{aligned}
				\text{for every  } &\text{$M \in \N$ there exist $\sigma\!=\!\sigma_M\!>\!1$ and $C\!=\!C_M\!>\!0$ such that}
				\\
				&\sup_{t \in \T^n} |\partial^\gamma_t f_j(t)| =
				\sup_{t \in \T^n} | f_j^\gamma(t)| \leq C^{|\gamma|+1} (\gamma!)^{\sigma} |\widetilde{\lambda}_j|^{-M},
				\\
				\text{for every } & j \in \N^\ast, \gamma \in \Z_+^n.
			\end{aligned}
		\end{equation}
	\end{cor}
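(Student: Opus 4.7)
The plan is to establish both implications separately, noting that the statement is an equivalence of two quantifier-hypothesis schemata indexed by $M \in \N$. The self-adjoint, positive, classical hypothesis enters only through the two-sided polynomial bound on the eigenvalues given in Remark \ref{rem:polyboundlambda}, which ensures that $\sum_{j} |\widetilde{\lambda}_j|^{-2K}$ converges once $K$ is taken large enough. This summability is the only non-trivial analytical input; the rest is bookkeeping.

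For the implication \eqref{eq:sclsa} $\Rightarrow$ \eqref{eq:s}, the strategy is to upgrade the index $M$ appearing in \eqref{eq:s} by using \eqref{eq:sclsa} at a sufficiently larger index. Given $M \in \N$, I will fix an integer $K$ such that $\sum_{j \in \N^\ast} |\widetilde{\lambda}_j|^{-2K}$ converges, whose existence follows from the lower bound $|\widetilde{\lambda}_j| \gtrsim j^{\varrho'}$ in Remark \ref{rem:polyboundlambda} by choosing $K$ with $2K\varrho' > 1$. Then I apply \eqref{eq:sclsa} with $M' = M + K$ to obtain $\sigma_{M'} > 1$ and $C_{M'} > 0$ such that $\sup_{t} |\partial_t^\gamma f_j(t)|^2 \le C_{M'}^{2(|\gamma|+1)} (\gamma!)^{2\sigma_{M'}} |\widetilde{\lambda}_j|^{-2(M+K)}$ for every $j, \gamma$. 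Summing against $\widetilde{\lambda}_j^{2M}$ gives
\[
   \sup_{t \in \T^n} \sum_{j \in \N^\ast} \widetilde{\lambda}_j^{2M} |\partial_t^\gamma f_j(t)|^2
   \le C_{M'}^{2(|\gamma|+1)} (\gamma!)^{2\sigma_{M'}} \sum_{j \in \N^\ast} |\widetilde{\lambda}_j|^{-2K},
\]
which is \eqref{eq:s} for the chosen $M$, with $\sigma_M = \sigma_{M'}$ and a new constant absorbing the finite series.

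For the converse \eqref{eq:s} $\Rightarrow$ \eqref{eq:sclsa}, the argument is purely algebraic: each individual term of a nonnegative series is bounded by its total sum. Given $M$, let $\sigma_M, C_M$ be the constants provided by \eqref{eq:s}. For any fixed $j \in \N^\ast$, $t \in \T^n$, $\gamma \in \Z_+^n$,
\[
   \widetilde{\lambda}_j^{2M} |\partial_t^\gamma f_j(t)|^2
   \le \sum_{k \in \N^\ast} \widetilde{\lambda}_k^{2M} |\partial_t^\gamma f_k(t)|^2
   \le C_M^{2(|\gamma|+1)} (\gamma!)^{2\sigma_M},
\]
and taking square roots and dividing by $\widetilde{\lambda}_j^{M}$ (nonzero by construction, cf.\ \eqref{eq:wtlambda}) yields \eqref{eq:sclsa} for that $M$.

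The only delicate point is ensuring the summability $\sum_j |\widetilde{\lambda}_j|^{-2K} < \infty$ for $K$ sufficiently large, which is where the classical, self-adjoint, positive hypothesis is indispensable: without the polynomial lower bound from Remark \ref{rem:polyboundlambda}, the direction \eqref{eq:sclsa} $\Rightarrow$ \eqref{eq:s} could fail. Otherwise the proof is short and consists of two complementary manipulations of an $\ell^2$-type sum.
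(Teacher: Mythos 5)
Your proof is correct and follows essentially the same route as the paper. The easy direction (each term of a nonnegative series is bounded by its sum) is the paper's "immediate" implication, and your harder direction — choose $K$ with $\sum_j |\widetilde\lambda_j|^{-2K}<\infty$ via the lower bound in Remark \ref{rem:polyboundlambda}, apply \eqref{eq:sclsa} at $M'=M+K$, and sum — is exactly the paper's argument, which picks $M'$ directly so that $\sum_j \widetilde\lambda_j^{2(M-M')}$ converges (your $K=M'-M$). You also correctly flag that the classical, self-adjoint, positive hypothesis is used solely through the eigenvalue asymptotics to secure the summability.
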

	\begin{proof}
		The implication \eqref{eq:s} $\Rightarrow$ \eqref{eq:sclsa} is immediate, and it holds true
		even under only the less restrictive hypotheses of Theorem \ref{thm:reprF}. To show that the opposite
		implication \eqref{eq:sclsa} $\Rightarrow$ \eqref{eq:s} holds true as well,
		recalling Remark \ref{rem:polyboundlambda}, 
		we first observe that for any $M\in\N$ there exists $M'=M_M'\in\N$ such that
		\begin{equation}\label{eq:suml}
			0<\sum_{j \in \N^\ast} \widetilde{\lambda_j}^{2(M-M')} =  S = S_{MM'} < \infty.
		\end{equation}
		Then, for any $M \in \N$ we choose $M'\in\N$ satisfying \eqref{eq:suml} and, by hypothesis, there
		are $\sigma=\sigma_{M'}>1$ and $C=C_{M'}>0$ satisfying condition \eqref{eq:sclsa}, which implies
		\begin{align*}
			\sup_{t\in\T^n}\sum_{j \in \N^\ast} \widetilde{\lambda}_j^{2M} |f^\gamma_j(t)|^2
			& \leq \sum_{j \in \N^\ast} \widetilde{\lambda}_j^{2M}\sup_{t\in\T^n} |f^\gamma_j(t)|^2
			\le\sum_{j \in \N^\ast} \widetilde{\lambda}_j^{2M}C^{2(|\gamma|+1)} (\gamma!)^{2\sigma} \widetilde{\lambda}_j^{-2M'}  \\
			& =C^{2(|\gamma|+1)} (\gamma!)^{2\sigma} \sum_{j \in \N^\ast} \lambda_j^{2(M-M')}
			 = S C^{2(|\gamma|+1)} (\gamma!)^{2\sigma}.
		\end{align*}
		It follows that \eqref{eq:s} holds true for any $\gamma\in\Z_+^n$ with $\widetilde{C}$ in place of $C$,
		setting $\widetilde{C}=C$ if $S\le1$ or $\widetilde{C}=SC$ if $S>1$.
	\end{proof}
	\begin{cor}\label{cor:seq}
		Assume that $f \in \Fc(\T ^n\times \R^d)$. 
		\begin{itemize} 
			\item[ i)] Under the hypotheses of Theorem \ref{thm:reprF},
				for every $M \in \N$ there exist $\sigma=\sigma_M>1$ and $C=C_M>0$ such that
				\begin{equation}\label{eq-seq}
					\|f_j\|_{\mathcal{G}^{\sigma,C}(\T^n)} \leq C |\widetilde{\lambda}_j|^{-M}, \quad j \in \N^\ast.
				\end{equation}
			\item[ii)] Under the hypotheses of Corollary \ref{cor:estpow},  
				for every $M \in \N$ there exist $\sigma=\sigma_M>1$ and $C=C_M>0$ such that
				\[
					\|f_j\|_{\mathcal{G}^{\sigma,C}(\T^n)} \leq C j^{-M\varrho}, \ j \to \infty,
				\]
				for some $\varrho>0$. In particular, $\{f_j\}_{j \in \N^\ast} \subset \mathcal{G}^{\sigma_1}(\T^n)$. 
		\end{itemize}
\end{cor}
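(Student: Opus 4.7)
The strategy is to read off the Gevrey-type bound for each single coefficient $f_j$ directly from condition \eqref{eq:s} of Theorem \ref{thm:reprF}, and then, in part (ii), to convert the eigenvalue factor $|\widetilde{\lambda}_j|^{-M}$ into a polynomial decay in $j$ by invoking the spectral asymptotics collected in Remark \ref{rem:polyboundlambda}. No new technical machinery is required: the entire content of the statement is a pointwise-in-$j$ specialization of what has already been established.

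For part (i), fix $M \in \N$ and let $\sigma = \sigma_M > 1$, $C = C_M > 0$ be the constants provided by condition \eqref{eq:s} for this $M$. Retaining only the single term indexed by $j$ in the series appearing in \eqref{eq:s} yields
\[
    \sup_{t \in \T^n} \widetilde{\lambda}_j^{2M}\, |\partial_t^\gamma f_j(t)|^2 \leq C^{2(|\gamma|+1)}(\gamma!)^{2\sigma}, \quad j \in \N^\ast, \gamma \in \Z_+^n,
\]
so that, taking square roots and isolating the eigenvalue factor,
\[
    C^{-|\gamma|}(\gamma!)^{-\sigma}\sup_{t \in \T^n}|\partial_t^\gamma f_j(t)| \leq C\, |\widetilde{\lambda}_j|^{-M}.
\]
Taking the supremum over $\gamma \in \Z_+^n$ on the left-hand side gives exactly the definition of $\|f_j\|_{\mathcal{G}^{\sigma,C}(\T^n)}$, which yields \eqref{eq-seq}.

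For part (ii), under the sharper assumptions of Corollary \ref{cor:estpow} (namely $P \in \Op(S^{m,\mu}_{\mathrm{cl}})$, self-adjoint, positive, elliptic), Remark \ref{rem:polyboundlambda} supplies constants $K' > 0$ and $\varrho' > 0$ such that $|\lambda_j| \geq K' j^{\varrho'}$ for $j$ large. Since $\widetilde{\lambda}_j = \lambda_j$ for $j \geq N+1$ (and $\widetilde{\lambda}_j = 1$ for $j \leq N$, which is harmless for asymptotic statements), combining this lower bound with \eqref{eq-seq} gives, for every $M \in \N$ and $j \to \infty$,
\[
    \|f_j\|_{\mathcal{G}^{\sigma,C}(\T^n)} \leq C\, |\widetilde{\lambda}_j|^{-M} \leq C\, (K')^{-M} j^{-M\varrho'},
\]
which, after renaming the constant $C$, is the claimed estimate with $\varrho = \varrho' > 0$. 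Finally, specializing to $M = 1$ yields $\sigma_1 > 1$ and $C_1 > 0$ with $\|f_j\|_{\mathcal{G}^{\sigma_1, C_1}(\T^n)} < \infty$ for every $j$, hence $f_j \in \mathcal{G}^{\sigma_1, C_1}(\T^n) \subset \mathcal{G}^{\sigma_1}(\T^n)$.

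There is no substantive obstacle: the only care needed is the bookkeeping of the constants (in particular, that $\sigma$ and $C$ depend on $M$ but not on $j$ or $\gamma$), and the use of $\widetilde{\lambda}_j$ rather than $\lambda_j$ to handle the finitely many indices $j \leq N$ belonging to $\ker P$.
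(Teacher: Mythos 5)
Your proof is correct and follows essentially the same route as the paper's: extract the single-term bound from condition \eqref{eq:s} to get \eqref{eq-seq} in part (i), then invoke Remark \ref{rem:polyboundlambda} for the polynomial rate and specialize to $M=1$ for the membership claim in part (ii). The only superficial difference is that for the final inclusion $\{f_j\}\subset\mathcal{G}^{\sigma_1}$ the paper splits into the cases $j\ge j_0$ (where $|\widetilde{\lambda}_j|^{-1}$ is small) and $j<j_0$ (finitely many, handled by a constant $C'$), whereas you simply note that \eqref{eq-seq} with $M=1$ already makes each $\|f_j\|_{\mathcal{G}^{\sigma_1,C_1}}$ finite — a marginally cleaner way to reach the same conclusion.
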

\begin{proof}
	i) By the proof of Theorem \ref{thm:reprF} and the definition of $\mathcal{G}^{\sigma,C}$ (cf. Section \ref{subs:gevreytorus}),
	it immediately follows that for
	every $M \in \N$ there exist $\sigma=\sigma_M>1$ and $C=C_M>0$ such that
	\[
		\sup_{t \in \T^n}|f_j^\gamma(t)| \leq C^{|\gamma|+1} (\gamma !)^{\sigma} |\widetilde{\lambda}_j|^{-M}, \quad j \in \N^\ast, \gamma \in \Z_+^n,
	\]
	that is,  $\{f_j\}_{j \in \N^\ast} \subset \mathcal{G}^{\sigma, C}$ and
	\[
		\|f_j\|_{\mathcal{G}^{\sigma,C}} \leq C |\widetilde{\lambda}_j|^{-M}, \quad j \in \N^\ast.
	\]
	ii) Recalling Remark \ref{rem:polyboundlambda}, the estimates for the Gevrey norms of the $f_j$ with respect to $j\to\infty$
	follow from the previous point. In particular, for  $M =1$, from $\lambda_{j} \to \infty$, we obtain $j_0\in \N$ such that	
	\[
		\sup_{t \in \T^n}|f_j^\gamma(t)| \leq  C_1 C^{|\gamma|+1} (\gamma !)^{\sigma_1}, \quad  j \geq j_0, \gamma \in \Z_+^n.
	\]
	On the other hand,
	\[
		\sup_{t \in \T^n}|f_j^\gamma(t)| \leq C_1 C'C^{|\gamma|+1} (\gamma !)^{\sigma_1}, \quad   j  =1, \ldots, j_0-1, \gamma \in \Z_+^n,
	\]
	where $C' = \max\{\widetilde{\lambda}_j^{-1}, \ j =1, \ldots, j_0-1\}$. 
	Therefore, $\{f_j\}_{j \in \N^\ast}\subset \mathcal{G}^{\sigma_1}$.
\end{proof}
	\begin{cor}\label{cor:suffhypo}
		Under the hypotheses of Corollary \ref{cor:estpow}, let $\{f_j\}_{j \in \N^\ast}\subset\mathcal{G}^{\sigma,C}(\T^n)$ 
		be a sequence with the property that for every $M' \in \N$ there exists $B=B_{M'}>0$ such that
		\[
			\|f_j\|_{\mathcal{G}^{\sigma,C}(\T^n)} \leq B j^{-M'}, \quad j \to \infty.
		\]
		Then, setting $f(t)=\sum_{j \in \N^\ast} f_j (t) \phi_j$, $t\in\T^n$, it follows $f \in \Fc(\T ^n\times \R^d)$.
	\end{cor}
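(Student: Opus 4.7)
The plan is to apply the characterisations of $\Fc$ furnished by Theorem \ref{thm:reprF} and Corollary \ref{cor:estpow}, which are available under the current hypotheses. Setting provisionally $f(t) = \sum_{j \in \N^\ast} f_j(t) \phi_j$, I aim to show that, for every $M \in \N$, there exists $\widetilde C_M > 0$ with
\[
	\sup_{t \in \T^n} \|\partial_t^\gamma f(t)\|_{H^{Mm, M\mu}} \leq \widetilde C_M^{|\gamma|+1} (\gamma!)^{\sigma}, \quad \gamma \in \Z_+^n,
\]
where $\sigma$ is the fixed Gevrey index given in the hypothesis. This gives $f \in \mathcal{G}^{\sigma} \mathcal{H}_{Mm, M\mu}^{\widetilde C_M}$ for each $M$. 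Since $m,\mu>0$, for any $r,\rho \in \R$ I can pick $M$ with $Mm \geq r$, $M\mu \geq \rho$, so that $H^{Mm,M\mu} \hookrightarrow H^{r,\rho}$ continuously; consequently $f \in \bigcap_{M \in \N} \mathcal{G}^\sigma \mathcal{H}_{Mm, M\mu} \subset \bigcap_{r,\rho \in \R} \mathcal{G}\mathcal{H}_{r,\rho} = \Fc$, as desired.

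For the key estimate, by Remark \ref{rem:polyboundlambda} there exist $\varrho > 0$ and $K'' > 0$ (absorbing the finitely many indices $j = 1, \dots, N$ with $\widetilde\lambda_j = 1$) such that $|\widetilde{\lambda}_j| \leq K'' j^\varrho$ for every $j \in \N^\ast$. Given $M \in \N$, I pick $M' \in \N$ with $M' > M\varrho + 1$, so that $\sum_{j \in \N^\ast} j^{2(M\varrho - M')}$ converges. The hypothesis yields $B = B_{M'} > 0$ with $\|f_j\|_{\mathcal{G}^{\sigma, C}} \leq B j^{-M'}$, which by definition of the Gevrey norm gives
\[
	\sup_{t \in \T^n} |\partial_t^\gamma f_j(t)| \leq B j^{-M'} C^{|\gamma|} (\gamma!)^\sigma, \quad j \in \N^\ast,\ \gamma \in \Z_+^n.
\]

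Applying Theorem \ref{thm:exp_Hmmu} to the partial sums $S^{(J)}(t) = \sum_{j=1}^J f_j(t) \phi_j \in \Sc$ and combining with the two estimates above, for $K < J$ and any $\gamma$ one obtains
\[
	\sup_{t \in \T^n} \bigl\|\partial_t^\gamma S^{(J)}(t) - \partial_t^\gamma S^{(K)}(t)\bigr\|^2_{H^{Mm, M\mu}} \lesssim B^2 (K'')^{2M} C^{2|\gamma|} (\gamma!)^{2\sigma} \sum_{j \geq K+1} j^{2(M\varrho - M')}.
\]
The right-hand side vanishes as $K \to \infty$ at a rate independent of $\gamma$ once the Gevrey factor $C^{|\gamma|} (\gamma!)^\sigma$ is pulled out of the norm, so $\{S^{(J)}\}_J$ is Cauchy in the Banach space $\mathcal{G}^\sigma \mathcal{H}_{Mm, M\mu}^{\widetilde C_M}$ for a suitable $\widetilde C_M \geq C$, and its limit is exactly the distributional sum $f$. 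The only delicate point is establishing the uniformity in $\gamma$ of this Cauchy estimate: this is precisely what the polynomial decay of $\|f_j\|_{\mathcal{G}^{\sigma,C}}$, paired with the polynomial growth bound on $\widetilde\lambda_j$ from Remark \ref{rem:polyboundlambda}, delivers. Otherwise the argument is a quantitative strengthening of the forward direction in the proof of Theorem \ref{thm:reprF}.
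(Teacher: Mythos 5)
Your proof is correct and follows essentially the same route as the paper: both verify the condition \eqref{eq:s} of Theorem \ref{thm:reprF} (equivalently, the $H^{Mm,M\mu}$--Gevrey estimates) by pairing the hypothesised polynomial decay of $\|f_j\|_{\mathcal{G}^{\sigma,C}}$ against the polynomial growth $\widetilde{\lambda}_j \lesssim j^\varrho$ from Remark \ref{rem:polyboundlambda}, choosing $M'$ large enough so that $\sum_j j^{2(M\varrho - M')}$ converges. Your Cauchy-sequence step in the Banach space $\mathcal{G}^\sigma\mathcal{H}^{\widetilde C_M}_{Mm,M\mu}$ merely makes explicit the convergence of the partial sums, which the paper leaves implicit when it invokes Theorem \ref{thm:reprF}; the substance is the same.
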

	
	\begin{proof}
		For any $M \in \N$, choose $M'=M_M'\in \N$ so that $\sum_{j \in \N^\ast} j^{2(M\varrho -M')} < \infty$,
		recalling that, in view of Remark \ref{rem:polyboundlambda}, it holds $\widetilde{\lambda}_j\leq K j^{\varrho}$, $j\to\infty$. 
		Then, for a suitable $j_0\in\N$,
		\begin{align*}
			\sup_{t \in \T^n}  \sum_{j \ge j_0} \widetilde{\lambda}_j^{2M} |\partial^\gamma_t f_j(t)|^2
			& \leq \sum_{j \ge j_0} \lambda_j^{2M} C^{2(|\gamma|+1)} (\gamma!)^{2 \sigma}B^2j^{-2M'}  \\
			& \leq K^{2M}C^{2(|\gamma|+1)} (\gamma!)^{2 \sigma} B^2\sum_{j \ge j_0} j^{2M\varrho-2M'}  \\
			& \leq \widetilde{C}^{2(|\gamma|+1)}  (\gamma!)^{2 \sigma}, \quad \gamma\in\Z_+^n,
		\end{align*}
		which implies the claim.
	\end{proof}
	
	The next Theorem \ref{thm:unifconvert} is the analogue for the space $\Fc$ of Theorem \ref{thm:unifconver} for the Schwartz space $\Sc$:
	it is a consequence of Theorem \ref{thm:reprF} and its corollaries.
	\begin{thm}\label{thm:unifconvert}
		Let $P\in\Op(S^{m,\mu}(\R^d))$ be a normal, elliptic SG-operator with order components $m, \mu>0$ or $m,\mu<0$,
		and let $\{\phi_j\}$ be a corresponding orthonormal basis of eigenfunctions. 
		\begin{enumerate}
		\item\label{point:implltr} For any $f \in \Fc$ the series 
		\begin{equation}\label{eq:convunscht}
			\sum_{j \in \mathbb N} \left|\partial^\gamma_t(f(t),\phi_j)_{L^2(\R^d)} \right| \, |A\phi_j(x)| , \quad \gamma \in \Z_+^n,
		\end{equation}
		converge uniformly on $\T^n\times\R^d$ for every SG-operator $A$ and there exist $B=B_{APnd}>0$, 
		$\sigma=\sigma_{APnd}>1$, $C=C_{APnd}>0$, depending only on $A,P,n$, and $d$,
		such that the sums $S_{\gamma A}$ of \eqref{eq:convunscht} satisfy the condition
		\begin{equation}\label{eq:sumconvunift}
			S_{\gamma A}(t,x)\le B C^{|\gamma|+1} (\gamma!)^{\sigma}, \quad t\in\T^n, x\in\R^d.
		\end{equation}
		\item Additionally, assume $P\in\Op(S^{m,\mu}(\R^d))$, $m,\mu>0$, to be
		self-adjoint, positive and SG-classical. Then, for $f\in\cinf(\T^n,L^2(\R^d))$ the converse of the implication
		in point \eqref{point:implltr} above holds true as well.
		\end{enumerate} 
	\end{thm}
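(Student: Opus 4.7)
The strategy is to parallel the proof of Theorem \ref{thm:unifconver}, adapting it to the time-periodic Gevrey setting by combining the pointwise estimate on $|A\phi_j(x)|$ with condition \eqref{eq:s} from Theorem \ref{thm:reprF}.

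\emph{Part (1).} Without loss of generality, assume $P$ invertible (replacing $P$, if necessary, by $\widetilde P=P+P_0$, which leaves the eigenfunctions unchanged and modifies only finitely many eigenvalues, cf.~\eqref{eq:wtlambda}). For an arbitrary $A\in\Op(S^{s,\sigma_0})$, the continuity of $A\colon H^{s,\sigma_0+\tau}\to H^{0,\tau}$, the Sobolev embedding $H^{0,\tau}\hookrightarrow L^\infty$ for $\tau>d/2$, the ordering of Sobolev--Kato spaces, and Remark~\ref{rem:normphij} together yield an integer $r\ge 0$ (depending on $A,P,d$) such that
\[
|A\phi_j(x)| \le \|A\phi_j\|_{L^\infty} \lesssim \|\phi_j\|_{H^{rm,r\mu}} \lesssim |\widetilde\lambda_j|^r,
\]
uniformly in $x\in\R^d$. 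For $f\in\Fc$, Theorem~\ref{thm:reprF} then provides, for each $M\in\N$, constants $\sigma=\sigma_M>1$, $C=C_M>0$ for which \eqref{eq:s} holds. By Cauchy--Schwarz,
\[
\sum_j |f_j^\gamma(t)|\,|A\phi_j(x)| \lesssim \Bigl(\sum_j \widetilde\lambda_j^{2M}|f_j^\gamma(t)|^2\Bigr)^{1/2}\Bigl(\sum_j |\widetilde\lambda_j|^{2r-2M}\Bigr)^{1/2}.
\]
Choosing $M$ so large that the second factor is finite (possible by Lemma~\ref{lemma:h-s} applied to a suitable power of $Q=\widetilde P^{-1}$), we deduce both the uniform convergence of \eqref{eq:convunscht} on $\T^n\times\R^d$ (via the same inequality applied to the tail) and the Gevrey bound \eqref{eq:sumconvunift}.

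\emph{Part (2).} Take $A=M^\alpha D^\beta\in\Op(S^{|\alpha|,|\beta|}_{\mathrm{cl}})$ for arbitrary $\alpha,\beta\in\Z_+^d$. Since $f\in\cinf(\T^n,L^2(\R^d))$, each $f_j(t)=(f(t),\phi_j)_{L^2}$ belongs to $\cinf(\T^n)$, and by hypothesis the series
\[
g_{\gamma\alpha\beta}(t,x) := \sum_j \partial_t^\gamma f_j(t)\, x^\alpha D^\beta\phi_j(x), \quad \gamma\in\Z_+^n,
\]
converges uniformly on $\T^n\times\R^d$ with a Gevrey bound of the form \eqref{eq:sumconvunift}. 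Setting $g:=g_{000}$, term-by-term differentiation (justified by the uniform convergence of all derivative series) yields $g\in\cinf(\T^n\times\R^d)$ with $x^\alpha D^\beta\partial_t^\gamma g = g_{\gamma\alpha\beta}$. Summing the resulting bounds over $|\alpha|+|\beta|\le N$ produces, for each $N\in\N$, a Gevrey estimate on the Schwartz seminorm $\nnorm{\partial_t^\gamma g(t)}_N$, uniform in $t\in\T^n$. Since $\Sc$-seminorms dominate every Sobolev--Kato norm $\|\cdot\|_{H^{r,\rho}}$, this gives $g\in\mathcal{G}^{\sigma_{r\rho}}\mathcal{H}_{r,\rho}$ for every $r,\rho\in\R$, hence $g\in\Fc$. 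Finally, $g(t,\cdot)$ and $f(t)$ share the same $L^2$ Fourier coefficients $\{f_j(t)\}$ with respect to the orthonormal basis $\{\phi_j\}$, so $g=f$ in $\cinf(\T^n,L^2)$, and therefore $f\in\Fc$.

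The principal obstacle lies in the converse direction: one must organise the infinitely many Gevrey data attached to the pairs $(\alpha,\beta)$ into a single Gevrey estimate for each fixed $(r,\rho)$, keeping track of how the constants $\sigma$ and $C$ depend on the seminorm order, and then reliably identify the constructed Schwartz-valued smooth function $g$ with the given $f$ via uniqueness of $L^2$ eigenfunction expansions.
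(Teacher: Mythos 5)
Your proof is correct and follows essentially the same strategy as the paper's: reduce to an invertible (Hilbert--Schmidt in the paper, large-order in your phrasing) version of $P$, combine the pointwise bound $|A\phi_j(x)|\lesssim|\widetilde\lambda_j|^r$ with the Gevrey decay of the coefficients from Theorem~\ref{thm:reprF} for part (1), and for part (2) define $g$ from the $A=M^\alpha D^\beta$ series, show $g\in\cinf(\T^n,\Sc)$ with Gevrey bounds on all Schwartz seminorms, and identify $g$ with $f$ via the common $L^2$ eigencoefficients. The only minor divergences are cosmetic: in part (1) you invoke Cauchy--Schwarz on the $\ell^2$ estimate \eqref{eq:s}, where the paper instead extracts the single-term pointwise bound $|f_j^\gamma(t)|\lesssim|\lambda_j|^{-M}$ (Corollary~\ref{cor:estpow}, first implication, valid without the extra hypotheses) and multiplies; and in part (2) you pass from Schwartz seminorm bounds to Sobolev--Kato bounds directly, while the paper estimates $\|P^M\partial_t^\gamma f(t)\|_{L^2}$ by the seminorm $\nnorm{\cdot}_{2[d/2]+2}$ and then invokes Corollary~\ref{cor:estpow} once more — both routes amount to the same control.
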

	\begin{proof}
		Arguing as in the proof of Theorem \ref{thm:unifconver}, we can assume $P$ invertible, with order components $m,\mu<-\frac{d}{2}$,
		and so Hilbert-Schmidt, with nonvanishing eigenvalues $\{	\lambda_j\}_{j \in \N^\ast}$ such that $\sum_{j \in \N^\ast}|\lambda_j|^2<\infty$, and
		$A\in\Op(S^{pm,p\mu})$, $p \in \Z$, $p<0$ so that $|A\phi_j(x) | \leq \widetilde{B} |\lambda_j|^{p-1}$, $x\in\R^d$, $\widetilde{B}>0$, and 
		$\widetilde{B}$ depends only on $A,P,n$, and $d$.
		\begin{enumerate}
		\item Assume $f\in\Fc$. Then, by Theorem \ref{thm:reprF}, for any $M\in\Z$, for suitable $\sigma>1$, $C>0$,
		\[
			\sup_{t \in \T^n} |\partial^\gamma_t (f(t),\phi_j)| \leq C^{(|\gamma|+1)} (\gamma!)^{ \sigma}|\lambda_j|^{-M}, \quad\gamma\in\Z_+^n.
		\]
		Choosing $M=p-3$, we then find, for any $t\in\T^n$, $x\in\R^d$, $\gamma\in\Z_+^n$,
		\begin{align*}
			\sum_{j \in \mathbb N} \left|\partial^\gamma_t(f(t),\phi_j) \right| \, |A\phi_j(x)|
			&\le
			\sum_{j \in \N^\ast} C^{(|\gamma|+1)} (\gamma!)^{ \sigma}|\lambda_j|^{-p+3} \widetilde{B} |\lambda_j|^{p-1}
			\\
			&\le  \widetilde{B}C^{(|\gamma|+1)} (\gamma!)^{ \sigma}\sum_{j \in \N^\ast}|\lambda_j|^2
			= BC^{(|\gamma|+1)} (\gamma!)^{ \sigma},
		\end{align*}
		which proves the claim.
		\item We start as in the second part of the proof of Theorem \ref{thm:unifconver}, setting $A=M^\alpha D^\beta$, $\alpha,\beta\in\Z_+^d$,
		\[
			f^\gamma_{\alpha\beta}(t,x)=\sum_{j \in \N^\ast}\partial^\gamma_t(f(t),\phi_j)\,(M^\alpha D^\beta\phi_j)(x),\quad\gamma\in\Z_+^n,
		\]
		and $g=f^0_{00}$. By the hypotheses, it follows $g\in\cinf(\T^n\times\R^d)$, with 
		$f^\gamma_{\alpha\beta}(t,x)=\partial_t^\gamma [x^\alpha D_x^\beta g(t,x)]$.  As in the proof of Theorem \ref{thm:unifconver}, 
		we actually find $g\in\cinf(\T^n,\Sc)$, and, of course, $g=f$. It then also follows 
		\[
			\partial^\gamma_t f_j(t)=\partial_t^\gamma(f(t),\phi_j)=(\partial_t^\gamma f(t),\phi_j)=f^\gamma_j(t),
			\; t\in\T^n,j \in \N^\ast,\gamma\in\Z_+^n.
		\]
		For any $M\in\N$ there exist $\sigma>1$, $C>0$, such that
		\begin{align*}
			&\|P^M\partial^\gamma_t f(t)\|_{L^2}\le C_d\bignnorm{\sum_{j \in \N^\ast} f^\gamma_j(t) P^M\phi_j}_{2\left[\frac{d}{2}\right]+2}
			\\
			&\le\widetilde{C}_d \sum_{|\alpha+\beta|\le2\left[\frac{d}{2}\right]+2}
			\sup_{\R^d}\left|\sum_{j \in \N^\ast}f^\gamma_j(t) \, M^\alpha D^\beta P^M\phi_j \right|
			\\
			&\le \widetilde{C}_d  \sum_{|\alpha+\beta|\le2\left[\frac{d}{2}\right]+2}
			\sup_{\R^d}\sum_{j \in \N^\ast}|f^\gamma_j(t)|\;|M^\alpha D^\beta P^M\phi_j|
			\\
			&\le \widetilde{C}_d  \sum_{|\alpha+\beta|\le2\left[\frac{d}{2}\right]+2}
			B_{\alpha\beta Pnd} \, C_{\alpha\beta Pnd}^{|\gamma|+1}\,(\gamma!)^{\sigma_{\alpha\beta Pnd}} 
			\le C^{|\gamma|+1}\,(\gamma!)^{\sigma}, t\in\T^n, \gamma\in\Z_+^n.
		\end{align*}
		It follows that for any $M\in\N$ there exist $\sigma>1$, $C>0$ such that
		\begin{align*}
			|\partial_t^\gamma f_j(t)|\, |\widetilde{\lambda}_j|^M &=
			|\partial_t^\gamma f_j(t)|\, |\widetilde{\lambda}_j|^M\,\|\phi_j\|_{L^2} = \|P^M\partial_t^\gamma  [(f(t),\phi_j)\phi_j]\|_{L^2}
			\\
			&\le \|P^M\partial_t^\gamma f(t)\|_{L^2} \le C^{|\gamma|+1}\,(\gamma!)^{\sigma}, \,\, t\in\T^n, \quad j \in \N^\ast, \gamma\in\Z_+^n
			\\
			&\iff \sup_{t\in\T^n}|\partial_t^\gamma f_j(t)|\le C^{|\gamma|+1}\,(\gamma!)^{\sigma} |\widetilde{\lambda}_j|^{-M}, \quad j \in \N^\ast,\gamma\in\Z_+^n.
		\end{align*}
		By Corollary \ref{cor:estpow}, this proves $f\in\Fc$.
		\end{enumerate}
	\end{proof}
	\begin{rem}\label{rem:conjugates}
		Since, for any $g\in\Sc'$, $\Op(\lambda_\rho)\overline{g}=\langle D\rangle^\rho \overline{g}=\overline{\langle D\rangle^\rho g}$,
		it also follows 
		\[	
			\jap^r \langle D\rangle^\rho \overline{g}=\overline{\jap^r \langle D\rangle^\rho g}
			\Rightarrow
			\|\overline{g}\|_{H^{r,\rho}}=\|g\|_{H^{r,\rho}}, \quad g\in\Sc', r,\rho\in\R.
		\] 
		Then, by Definition \ref{def:gscrr}, $u\in\gh^{C}\iff \overline{u}\in\gh^{C}$, with $\|u\|_{\sigma,C,r,\rho}=\|\overline{u}\|_{\sigma,C,r,\rho}$,
		for arbitrary $\sigma>1$, $C>0$, $r,\rho\in\R$, and, of course, $f\in\Fc\iff\overline{f}\in\Fc$. Recalling that also $\{\overline{\phi_j}\}_{j \in \N^\ast}$
		is an orthonormal basis, and observing that
		\[
			\widetilde{f_j}(t)\doteq(f(t),\overline{\phi_j})=\int_{\R^d} f(t)\,\phi_j=\overline{(\overline{f}(t),\phi_j)}=\overline{\overline{f}_j(t)}, \quad t\in\T^n,j \in \N^\ast,f\in\Fc,
		\]
		we notice that the statements of Theorem \ref{thm:reprF} and Corollaries \ref{cor:estpow}, \ref{cor:seq}, \ref{cor:suffhypo} hold true 
		also with the coefficients $\widetilde{f_j}$ in place of the coefficients $f_j$, $j \in \N^\ast$. This will
		be useful in the subsequent Section \ref{subs:reprfp}.
	\end{rem}
	
	\subsection{Eigenfunctions expansions in $\Fc'$}\label{subs:reprfp}
	Let	$\theta \in \Fc'$ and  $M \in \Z$ such that $\theta \in \mathcal{G}\mathcal{H}_{Mm,M\mu}'$. 
	For any $\psi \in \mathcal{G}^{\sigma}(\T^n)$ we consider $\psi \otimes \overline{\phi_j} \in \mathcal{G}^{\sigma}\mathcal{H}_{Mm,M\mu}$ by setting
	\begin{align*}
		\T^n \ni t \mapsto \psi(t)\overline{\phi_j} \colon  \R^d \to \C \colon x \mapsto \psi(t)\overline{\phi_j(x)}, \quad j \in \N^\ast.
	\end{align*}
	Then, there are well-defined linear maps $\theta_j: \mathcal{G}^{\sigma}(\T^n) \to \C, $ given by
	\begin{equation}\label{partial_in_F'}
		\langle  \theta_j  ,  \psi \rangle \doteq 
		\langle  \theta ,  \psi \otimes \overline{\phi_j}  \rangle, \quad j \in \N^\ast.
	\end{equation}
	We claim that  $\theta_j \in (\mathcal{G}^{\sigma}(\T^n))'$. 
	Indeed, given any constant $C>0$, by Proposition \ref{prop:charGHrrho} there exists $B=B_{\sigma C}>0$ 
	such that
	\begin{equation}\label{eq:thetaseriesest}
		\begin{aligned}
			|\langle  \theta_j  ,  \psi  \rangle| & = 
			|\langle  \theta  ,  \psi \otimes \overline{\phi_j}  \rangle|  \\
			&\leq B
			\sup_{\gamma \in \Z_{+}^n}\left\{
			C^{-|\gamma|}(\gamma!)^{-\sigma} \sup_{t \in \T^n} \|\partial_t^{\gamma} \psi(t)\overline{\phi_j}\|_{H^{Mm,M\mu}}
			\right\} \\
			& = B \|\overline{\phi_j}\|_{H^{Mm,M\mu}}
			\sup_{\gamma \in \Z_{+}^n}\left\{
			C^{-|\gamma|}(\gamma!)^{-\sigma} \sup_{t \in \T^n} |\partial_t^{\gamma}\psi(t)|\right\}
			\\
			& = B \|\phi_j\|_{H^{Mm,M\mu}}
			\|\psi\|_{\mathcal{G}^{\sigma,C}} \le \widetilde{B} |\widetilde{\lambda}_j|^M \|\psi\|_{\mathcal{G}^{\sigma,C}}, \quad j \in \N^\ast,
	\end{aligned} 
	\end{equation}
	which proves the assertion, recalling Remarks \ref{rem:normphij} and \ref{rem:conjugates}. This allows us to decompose any $\theta\in\Fc'$ into a series of tensor products,
	whose first factors satisfy the estimates \eqref{eq:thetaseriesest},  
	as shown in the subsequent Lemma \ref{lem:thetaseries}. Next, we study convergence in $\Fc'$.
	%
	%
	\begin{lemma}\label{lem:thetaseries}
		Let $\theta \in \Fc'(\T ^n\times \R^d)$. Then
		\begin{equation}\label{eq:defserie}
			\theta=\sum_{j \in \N^\ast}\theta_j\otimes\phi_j,
		\end{equation}
		with $\{\theta_j\}_{j \in \N^\ast} \subset (\mathcal{G}^{\sigma}(\T^n))'$ given by \eqref{partial_in_F'}, and so satisfying \eqref{eq:thetaseriesest}.
	\end{lemma}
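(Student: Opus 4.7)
The plan is to interpret \eqref{eq:defserie} as an identity for the action of $\theta$ on test functions, namely
$$\langle\theta,f\rangle=\sum_{j\in\N^\ast}\langle\theta_j,\widetilde{f_j}\rangle,\quad f\in\Fc,$$
where $\widetilde{f_j}(t)=(f(t),\overline{\phi_j})_{L^2(\R^d)}=\int f(t,x)\phi_j(x)\,dx$ is the coefficient of $f(t)$ in the orthonormal basis $\{\overline{\phi_j}\}_{j\in\N^\ast}$. This is the natural pairing that identifies $\theta_j\otimes\phi_j$ with a functional on $\Fc$, in which $\phi_j$ acts in the $x$ variable as $\chi\mapsto\int\phi_j\chi$. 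Correspondingly, $f$ is to be expanded as $f=\sum_j\widetilde{f_j}\,\overline{\phi_j}$, using the basis $\{\overline{\phi_j}\}$.

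First I would consider the partial sums $f_N=\sum_{j=1}^N\widetilde{f_j}\,\overline{\phi_j}$ and establish that $f_N\to f$ in $\Fc$. By Remark \ref{rem:conjugates}, both Theorem \ref{thm:reprF} and Corollary \ref{cor:estpow} apply with $\widetilde{f_j}$ in place of $f_j$: for every $M'\in\N$ there exist $\sigma=\sigma_{M'}>1$ and $C=C_{M'}>0$ with
$$\sup_{t\in\T^n}|\partial_t^\gamma\widetilde{f_j}(t)|\le C^{|\gamma|+1}(\gamma!)^\sigma\,|\widetilde\lambda_j|^{-M'},\quad j\in\N^\ast,\,\gamma\in\Z_+^n.$$
Given $r,\rho\in\R$, I would choose $M\in\N$ so that $H^{Mm,M\mu}\hookrightarrow H^{r,\rho}$, and then $M'$ so large that $S_N\doteq\sum_{j>N}|\widetilde\lambda_j|^{2(M-M')}$ is finite and tends to zero. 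Applying Theorem \ref{thm:exp_Hmmu} (with $\widetilde P$) one gets
$$\|\partial_t^\gamma(f-f_N)(t)\|_{H^{r,\rho}}^2 \lesssim \sum_{j>N}|\widetilde\lambda_j|^{2M}|\partial_t^\gamma\widetilde{f_j}(t)|^2 \lesssim C^{2(|\gamma|+1)}(\gamma!)^{2\sigma}\,S_N,$$
uniformly in $t\in\T^n$, hence $\|f-f_N\|_{\sigma,\widetilde C,r,\rho}\to 0$ for a suitable $\widetilde C>0$. This yields convergence in $\mathcal{G}\mathcal{H}_{r,\rho}$ for every $r,\rho\in\R$, that is, in $\Fc$.

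Next, using continuity of $\theta\in\Fc'$, linearity, and the definition \eqref{partial_in_F'} of $\theta_j$, I would conclude
$$\langle\theta,f\rangle=\lim_{N\to\infty}\langle\theta,f_N\rangle=\lim_{N\to\infty}\sum_{j=1}^N\langle\theta,\widetilde{f_j}\otimes\overline{\phi_j}\rangle=\sum_{j\in\N^\ast}\langle\theta_j,\widetilde{f_j}\rangle,$$
which is the claimed decomposition \eqref{eq:defserie}. That the $\theta_j$ belong to $(\mathcal{G}^\sigma(\T^n))'$ and satisfy \eqref{eq:thetaseriesest} is already recorded in the discussion immediately preceding the lemma.

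The main obstacle lies in the first step, specifically in arranging the tail estimate so that $f_N\to f$ in a single Banach space $\mathcal{G}^\sigma\mathcal{H}^{\widetilde C}_{r,\rho}$, with $(\sigma,\widetilde C)$ depending only on $(r,\rho)$. This rests on the polynomial growth of $\widetilde\lambda_j$ (cf. Remark \ref{rem:polyboundlambda}), which guarantees the absolute summability of $|\widetilde\lambda_j|^{2(M-M')}$ for sufficiently large $M'$ and the decay $S_N\to 0$. Once this uniform tail bound is in hand, the second part of the argument is just the bookkeeping of continuity and definitions.
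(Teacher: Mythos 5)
Your proof takes essentially the same route as the paper: expand $f$ in the conjugate basis $\{\overline{\phi_j}\}$, pair with $\theta_j\otimes\phi_j$, and use the definition of $\theta_j$ together with continuity of $\theta$ to collapse the sum to $\langle\theta,f\rangle$. The only difference is that you explicitly establish $f_N\to f$ in $\Fc$ (via the tail estimates from Theorem \ref{thm:reprF}, Remark \ref{rem:conjugates} and the polynomial/Hilbert-Schmidt bound on the eigenvalues) to justify exchanging $\theta$ with the infinite sum, a step the paper's proof leaves implicit under the phrase ``recalling the properties of tensor products of distributions.''
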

	\begin{proof}
		Since also $\{\overline{\phi_j}\}_{j \in \N^\ast}$ is an orthonormal basis, for any $f\in\Fc$ we can write
		\[		
			f (t,x)= \sum_{j \in \N^\ast}(f(t),\overline{\phi_j})\,\overline{\phi_j(x)}=\left[\int_{\R^d}f(t,y)\phi_j(y)\,dy\right]\!\overline{\phi_j(x)}.
		\]
		%
		Then, recalling the properties of tensor products of distributions, for any $f\in\Fc$,
		\begin{align*}
			\sum_{j \in \N^\ast} \langle  \theta_j\otimes\phi_j  , f \rangle 
			&=\sum_{j \in \N^\ast}\left\langle \theta_j  , \, \int_{\R^d} f(\cdot,y) \phi_j(y)\, dy \right\rangle 
			\\
			&=\sum_{j \in \N^\ast}\left\langle \theta, (f(\cdot),\overline{\phi_j})\otimes \overline{\phi_j}(\cdot\cdot)\right\rangle
			\\
			&=\left\langle \theta, \sum_{j \in \N^\ast}(f(\cdot),\overline{\phi_j})\otimes \overline{\phi_j}(\cdot\cdot)\right\rangle
			=\langle \theta, f\rangle,
		\end{align*}
		as claimed.
	\end{proof}
	\begin{thm}\label{thm:cauchyseq}
		Let $\{\tau_j\}_{j \in \N^\ast}\subset\Fc'(\T ^n\times \R^d)$ be such that
		$\{\langle \tau_j,f \rangle \}_{j \in \N^\ast}$ is a Cauchy sequence in $\C$, for all 
		$f \in \Fc(\T ^n\times \R^d)$. Then there exists $\tau \in \Fc'(\T ^n\times \R^d)$ such that
		$\displaystyle\tau = \lim_{j \to \infty}  \tau_j$, that is, 
		\[
			\langle \tau, f \rangle = \lim_{j \to \infty}  \langle \tau_j, f \rangle, \quad f \in \Fc(\T ^n\times \R^d).
		\]
	\end{thm}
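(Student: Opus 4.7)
The plan is to define $\tau$ as the pointwise limit of the $\tau_j$ on $\Fc$, then to establish its continuity using a uniform boundedness argument together with the characterization \eqref{F'=UH'} of $\Fc'$.

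First, since $\{\langle \tau_j, f\rangle\}_{j \in \N^\ast}$ is Cauchy in $\C$ for every $f \in \Fc$, it converges; set
$$\langle \tau, f \rangle \doteq \lim_{j \to \infty} \langle \tau_j, f \rangle, \quad f \in \Fc(\T^n \times \R^d).$$
Linearity of $\tau$ is immediate from linearity of each $\tau_j$ together with $\C$-linearity of the limit. To place $\tau$ in $\Fc'$, by \eqref{F'=UH'} and Proposition \ref{prop:charGHrrho}, it remains to exhibit $r, \rho \in \R$ such that for every $\sigma > 1$ and $C > 0$ there exists $B = B_{\sigma C} > 0$ with
$$|\langle \tau, u\rangle| \leq B \|u\|_{\sigma, C, r, \rho}, \quad u \in \mathcal{G}^{\sigma}\mathcal{H}_{r,\rho}^C(\T^n \times \R^d).$$

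The hypothesis yields pointwise boundedness of $\{\tau_j\}$ on $\Fc$: for each $f \in \Fc$, the numerical sequence $\{\langle \tau_j, f\rangle\}$, being convergent, is bounded. The main obstacle, which I expect to be the most delicate step, is to upgrade this pointwise control to equicontinuity, producing a single pair $(r, \rho)$ and a uniform-in-$j$ estimate of the required form; \emph{a priori} each $\tau_j$ need only extend continuously to some possibly distinct pair $(r_j, \rho_j)$.

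To overcome this, I would apply the Banach-Steinhaus principle. Using the inclusions \eqref{sig,C,sobolev}, we may realize $\Fc$ as the countable projective limit $\bigcap_{r,\rho \in \Z} \mathcal{G}\mathcal{H}_{r,\rho}$; each factor is an (LB)-space, hence barrelled, so a standard functional-analytic argument shows that $\Fc$ itself is barrelled. Consequently, the pointwise bounded family $\{\tau_j\} \subset \Fc'$ is equicontinuous on $\Fc$, and there exist $r, \rho \in \R$ such that the $\tau_j$ are uniformly bounded by a fixed continuous seminorm on $\mathcal{G}\mathcal{H}_{r,\rho}$. By Proposition \ref{prop:charGHrrho}, this translates into the uniform-in-$j$ estimate $|\langle \tau_j, u\rangle| \leq B_{\sigma C} \|u\|_{\sigma, C, r, \rho}$ for $u \in \mathcal{G}^{\sigma}\mathcal{H}_{r,\rho}^C$, with constants $B_{\sigma C}$ independent of $j$. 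Passing to the limit $j \to \infty$ yields the same bound for $\tau$, whence $\tau \in \mathcal{G}\mathcal{H}_{r,\rho}' \subset \Fc'$, completing the verification.
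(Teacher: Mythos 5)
Your strategy — Banach--Steinhaus plus the description \eqref{F'=UH'} of $\Fc'$ — is in the same spirit as the paper's argument, and you correctly anticipate that upgrading pointwise boundedness to equicontinuity is the delicate step. However, the way you resolve it contains a genuine gap. You assert that ``each factor is an (LB)-space, hence barrelled, so a standard functional-analytic argument shows that $\Fc$ itself is barrelled.'' This is not a standard fact. A countable projective limit of barrelled spaces is \emph{not} automatically barrelled; $\Fc$ is a (PLB)-space (a countable projective limit of countable inductive limits of Banach spaces), and for such spaces barrelledness is a well-known delicate question, typically requiring nontrivial conditions on the spectrum (of the type $\mathrm{Proj}^1=0$ in the sense of Palamodov/Vogt) rather than following formally from barrelledness of the factors. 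As written, the claim is unjustified, and the rest of the argument cannot get off the ground without it.

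The paper sidesteps exactly this obstacle. Rather than trying to apply Banach--Steinhaus on $\Fc$ directly, the paper fixes, for each $j$, a pair $(r_j,\rho_j)$ and an index $\sigma_{r_j,\rho_j}$ such that $\tau_j$ is continuous on the Banach step $\mathcal{G}^{\sigma_{r_j,\rho_j}}\mathcal{H}_{r_j,\rho_j}^{\sigma_{r_j,\rho_j}-1}$, and then forms the auxiliary space
\[
\mathcal{B}=\bigcap_{j\in\N^\ast}\mathcal{G}^{\sigma_{r_j,\rho_j}}\mathcal{H}_{r_j,\rho_j}^{\sigma_{r_j,\rho_j}-1}\subset\Fc,
\]
a \emph{countable} intersection of Banach spaces, hence a Fr\'echet space, which is manifestly complete, metrizable, and therefore barrelled. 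Banach--Steinhaus is applied on $\mathcal{B}$, giving equicontinuity of the restrictions $\omega_j=\tau_j|_{\mathcal B}$, and the desired continuity of $\tau$ is then extracted by a $\frac{\epsilon}{2}+\frac{\epsilon}{2}$ argument along a sequence $f_\ell\to0$ in $\Fc$ (using that such a sequence also tends to zero in $\mathcal B$). The crucial idea you are missing is thus the construction of an auxiliary Fr\'echet space tailored to the given sequence $\{\tau_j\}$, so that Banach--Steinhaus can be invoked on solid ground rather than on the (PLB)-space $\Fc$ itself. If you wish to rescue your version, you would need to actually prove barrelledness of $\Fc$ — a substantially harder (and possibly false, or at any rate unestablished) statement — whereas the paper's device reduces the whole issue to the classical Fr\'echet case.
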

	\begin{proof}
		By hypothesis, it is well defined the linear map $\tau:  \Fc \to \C$ given by
		\[
			\langle \tau , f \rangle = \lim_{j \to \infty} \langle \tau_j , f \rangle,\quad f\in\Fc.
		\]
		In order to verify the continuity, we first recall that
		\[
			\Fc' = \bigcup_{r,\rho \in \R} \mathcal{G}\mathcal{H}_{r,\rho}' = \bigcup_{r,\rho \in \R} 
			\left[
				\bigcap_{\sigma>1} 
				(\mathcal{G}^{\sigma}\mathcal{H}_{r,\rho}^{\sigma-1})'
			\right]\! .
		\]
		Hence, for each $j \in \N^\ast$ we can find $r_j, \rho_j \in \R$ such that 
		$\tau_j \in (\mathcal{G}^{\sigma}\mathcal{H}_{r_j,\rho_j}^{\sigma-1})'$ for any $\sigma>1$.
		Now, let $\{f_\ell\}_{\ell \in\N}$ be a sequence in $\Fc$ converging to $0$. 
		This means that for every $r,\rho \in \R$ there is $\sigma=\sigma_{r,\rho}>1$ such that 
		\[
			f_\ell \to 0 \ \textrm{ in } \  \mathcal{G}^{\sigma_{r,\rho}} \mathcal{H}^{\sigma_{r,\rho}-1}_{r,\rho}.
		\]
		In particular, $\{f_\ell\}_{\ell \in\N}$ converges to zero in each $\mathcal{G}^{\sigma_{r_j,\rho_j}} \mathcal{H}^{\sigma_{r_j,\rho_j}-1}_{r,\rho_j}$.
		Consider the complete metrizable space
		\[
			\mathcal{B} = \bigcap_{j \in \N^\ast} \mathcal{G}^{\sigma_{r_j,\rho_j}} \mathcal{H}^{\sigma_{r_j,\rho_j}-1}_{r,\rho_j} \subset \Fc,
		\]
		and define by $\omega_j$ the restrictions 
		\[
			\omega_j = \tau_j|_{\mathcal{B}} : \mathcal{B} \to \C, \ j \in \N^\ast,
		\]
		which clearly are linear and continuous. Notice that $\{\langle \omega_j , b \rangle\}$ is a Cauchy sequence in $\C$, for every $b \in \mathcal{B}$. 
		Therefore, by the  Banach-Steinhaus theorem, $\{\omega_j\}_{j \in \N^\ast}$ is equicontinuous. Since $f_{\ell}$ converges 
		to zero in $\mathcal{B}$, for every $\epsilon>0$ there exists some $\ell_0 \in \N$ such that
		\begin{equation}\label{eq:C1}
			|\langle \omega_j , f_\ell \rangle| \leq \dfrac{\epsilon}{2}, \quad \ell \ge \ell_0,  j \in \N^\ast.
		\end{equation}
		Now, by $\langle \tau , f_{\ell} \rangle = \lim_{j\to\infty} \langle \tau_j , f_{\ell} \rangle$, we may assume that for 
		$\ell\geq\ell_0$ there exists an index $j_{\ell}$ satisfying 
		\begin{equation}\label{eq:C2}
			|\langle \tau , f_\ell \rangle - \langle \tau_{j_{\ell}} , f_\ell \rangle | < \dfrac{\epsilon}{2}.
		\end{equation}
		Finally, for all $\ell\geq\ell_0$, it follows from \eqref{eq:C1} and  \eqref{eq:C2} that
		\begin{align*}
			|\langle \tau , f_\ell \rangle| & \leq 
			|\langle \tau , f_\ell \rangle - \langle \tau_{j_{\ell}} , f_\ell \rangle | +  |\langle \tau_{j_{\ell}} , f_\ell \rangle|  \\
			& = |\langle \tau , f_\ell \rangle - \langle \tau_{j_{\ell}} , f_\ell \rangle | + |\langle \omega_{j_{\ell}} , f_\ell \rangle| \\
			& < \epsilon,
		\end{align*}
		that is, $\tau\in\Fc'$. The proof is complete.
	\end{proof}
	
	The subsequent Theorem \ref{thm:seq} provides a sufficient condition on the coefficients of an expansion with
	respect to the basis $\{\phi_j\}_{j \in \N^\ast}$ to indeed produce an element of $\Fc'$. Together with Lemma \ref{lem:thetaseries}
	it completes the characterisation of $\Fc'$ in terms of eigenfunctions expansions associated with a classical,
	self-adjoint, positive SG-operator, and is a further main result of the paper.
	
	\begin{thm}\label{thm:seq}
		Let $P \in \mathrm{Op}(S_\mathrm{cl}^{m,\mu}(\R^d))$ be an elliptic, self-adjoint, positive, 
		classical SG-operator with order components $m,\mu>0$, and denote by $\{\phi_j\}_{j \in \N^\ast}$ a basis 
		of orthonormal eigenfunctions of $P$ with corresponding eigenvalues $\{\lambda_j\}_{j \in \N^\ast}$.
		Let $\{\vartheta_j\}_{j \in \N^\ast} \subset \bigcap_{\sigma>1}(\mathcal{G}^{\sigma}(\T^n))'$ 
		be a sequence such that there exist $M \in \Z$,  $B>0$, satisfying 
		\[
			|\langle  \vartheta_j ,  \psi \rangle| \leq B \|\psi\|_{\mathcal{G}^{\sigma,C}(\T^n)}
			|\widetilde{\lambda}_{j}|^{M}, \quad j \in\N^\ast, 
		\]
		for all $\sigma>1$, $C>0$, $\psi \in \mathcal{G}^{\sigma,C}(\T^n)$. Then
		\begin{equation}\label{fourier-inv}
			\vartheta = \sum_{j \in \N^\ast} \vartheta_j \otimes\phi_j\in \Fc'(\T ^n\times \R^d).
		\end{equation}
		Moreover, 
		\[
			\langle  \vartheta_j , \psi \rangle  = \langle  \vartheta ,  \psi\otimes\overline{\phi_j} \rangle, \quad \psi \in  \mathcal{G}^{\sigma}(\T^n),j \in \N^\ast.
		\]
	\end{thm}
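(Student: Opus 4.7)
My plan is to construct $\vartheta$ as the limit in $\Fc'$ of the partial sums $\tau_N \doteq \sum_{j=1}^N \vartheta_j \otimes \phi_j$, each of which is a finite sum of tensor products of elements of $\bigcap_{\sigma>1}(\mathcal{G}^{\sigma}(\T^n))'$ with Schwartz functions, hence a well-defined element of $\Fc'$ (cf.\ the discussion preceding Lemma \ref{lem:thetaseries}). By Theorem \ref{thm:cauchyseq}, it then suffices to prove that $\{\langle \tau_N, f\rangle\}_N$ is a Cauchy sequence in $\C$ for each $f \in \Fc$; the resulting limit $\vartheta$ will automatically belong to $\Fc'$ and realize the series representation \eqref{fourier-inv}.

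For the Cauchy property I would expand $f \in \Fc$ along the basis $\{\overline{\phi_j}\}_{j \in \N^\ast}$, as in the proof of Lemma \ref{lem:thetaseries}, so that
\[
\langle \tau_N, f\rangle = \sum_{j=1}^N \langle \vartheta_j, \widetilde{f}_j\rangle, \quad \widetilde{f}_j(t) = \int_{\R^d} f(t,y)\,\phi_j(y)\,dy.
\]
Applying Corollary \ref{cor:estpow} to $\overline{f} \in \Fc$ via Remark \ref{rem:conjugates} furnishes, for every $M' \in \N$, exponents $\sigma' = \sigma'_{M'} > 1$ and $C' = C'_{M'} > 0$ with
\[
\|\widetilde{f}_j\|_{\mathcal{G}^{\sigma',C'}(\T^n)} \leq C' |\widetilde{\lambda}_j|^{-M'}, \quad j \in \N^\ast.
\]
Combined with the hypothesis on $\vartheta_j$, this yields $|\langle \vartheta_j, \widetilde{f}_j\rangle| \lesssim |\widetilde{\lambda}_j|^{M-M'}$, and Remark \ref{rem:polyboundlambda} gives $|\widetilde{\lambda}_j|^{M-M'} \lesssim j^{\varrho(M-M')}$ for some $\varrho > 0$. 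Choosing $M'$ large enough renders the resulting series absolutely summable in $j$, whence the Cauchy property follows.

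For the moreover assertion I would test $\vartheta$ against $f = \psi \otimes \overline{\phi_j} \in \Fc$, $\psi \in \mathcal{G}^{\sigma}(\T^n)$: orthonormality of $\{\phi_k\}$ yields $\widetilde{f}_k(t) = \psi(t)\,\delta_{jk}$, so the series representation for $\vartheta$ collapses to the single term $\langle \vartheta_j, \psi\rangle$, as required.

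I expect the main delicate point to be the bookkeeping of the constants $\sigma'_{M'}, C'_{M'}$ from Corollary \ref{cor:estpow} against the constant in the hypothesis on $\vartheta_j$, to secure a single choice of $M'$ producing a summable majorant. Once that is in place, the rest of the argument reduces to routine applications of Theorem \ref{thm:cauchyseq} and orthonormality of $\{\phi_j\}$.
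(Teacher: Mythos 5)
Your proposal is correct and essentially identical to the paper's argument: both form partial sums $\sum_{j\le N}\vartheta_j\otimes\phi_j$, verify the Cauchy property of $\langle\tau_N,f\rangle$ by combining the hypothesis on $\vartheta_j$ with the decay $\|\widetilde{f}_j\|_{\mathcal G^{\sigma,C}}\lesssim|\widetilde\lambda_j|^{-M'}$ (from Corollary \ref{cor:estpow} and Remark \ref{rem:conjugates}) and the polynomial bound on $\widetilde\lambda_j$ from Remark \ref{rem:polyboundlambda}, then invoke Theorem \ref{thm:cauchyseq}, and finally read off the ``moreover'' identity by testing against $\psi\otimes\overline{\phi_j}$ and using orthonormality. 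The only cosmetic difference is the citation for the coefficient decay (the paper writes Corollary \ref{cor:suffhypo}, but the estimate it actually invokes is the one in Corollary \ref{cor:seq}/\ref{cor:estpow}, which is what you correctly use).
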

	\begin{proof}
		Set, for each $j \in \N^\ast$,
		\[
			\varsigma_J = \sum_{j=1}^{J} \vartheta_j \otimes\phi_j \in \Fc'.
		\]
		Then, choose $M' \in \Z$ such that 
		$\sum_{j \in \N^\ast}j^{\varrho (M-M')} < + \infty$. 
		Recall that, by Corollary \ref{cor:suffhypo} and Remark \ref{rem:conjugates}, for any $f=\sum_{j \in \N^\ast}\widetilde{f_j}\overline{\phi_j} \in \Fc$ 
		there are $\sigma=\sigma_{M'}>1$ and $C=C_{M'}>0$ such that
		\[
			\|\widetilde{f_j}\|_{\mathcal{G}^{\sigma,C}} \leq C |\widetilde{\lambda}_j|^{-M'}, \quad j \in \N^\ast.
		\]
		Then, for arbitrary $\varepsilon>0$, suitable $J_0=J_0(\varepsilon)$, any $J>J_0$ and $\ell\ge1$,
		recalling also Remark \ref{rem:polyboundlambda},
		\begin{align*}
			|\langle \varsigma_{J+\ell} - \varsigma_J , f  \rangle|
			&\leq  \sum_{k=J+1}^{J+\ell} \sum_{j \in \N^\ast} |\langle  \vartheta_k  ,  \widetilde{f_j} \rangle| \, |\langle \phi_k,\overline{\phi_j}\rangle|
			=  \sum_{k=J+1}^{J+\ell} \sum_{j \in \N^\ast} |\langle  \vartheta_k  ,  \widetilde{f_j} \rangle| \, |( \phi_k,\phi_j)|
			\\&=\sum_{k=J+1}^{J+\ell} |\langle  \vartheta_k  ,  \widetilde{f_k} \rangle|
			\leq B\sum_{k=J+1}^{J+\ell} \|\widetilde{f_k}\|_{\mathcal{G}^{\sigma,C}} |\widetilde{\lambda}_k|^M \\
			&\leq BCK^{M-M'} \sum_{k=J+1}^{J+\ell}  j^{\varrho(M-M')}<\varepsilon.
		\end{align*}
		Hence, $\{\langle\varsigma_J,f\rangle\}_{j \in \N^\ast}$ is a Cauchy sequence in $\C$. Therefore, 
		by Theorem \ref{thm:cauchyseq}, there exists $\vartheta \in \Fc'$
		such that
		\[
			\langle \vartheta , f \rangle = \lim_{J\to\infty}  \langle  \varsigma_J ,f \rangle,  \; f \in  \Fc 
			\iff \vartheta=\sum_{j \in \N^\ast}\vartheta_j\otimes\phi_j.
		\]
		It also follows, for any $\psi \in \mathcal{G}^{\sigma}$, $j \in \N^\ast$,
		\begin{align*}
			\langle\vartheta,\psi\otimes\overline{\phi_j}\rangle&=\sum_{k\in\N}\langle \vartheta_k\otimes\phi_k,\psi\otimes\overline{\phi_j}\rangle
			= \sum_{k\in\N}\langle \vartheta_k, \psi\rangle \cdot \langle \phi_k,\overline{\phi_j}\rangle
			\\
			&= \sum_{k\in\N}\langle \vartheta_k, \psi\rangle \cdot ( \phi_k,\phi_j ) = \langle\vartheta_j,\psi\rangle.
		\end{align*}
		The proof is complete.
	\end{proof}

	\section{Periodic evolution equations and hypoellipticity on $\T\times\R^d$}\label{sec:torussghypoell}
	\setcounter{equation}{0}
	
In this last section we discuss the global hypoellipticity on $\T\times\R^d$ of the operator 
\begin{equation}\label{L_constant_coeff}
	L = D_t + \omega P, \ t \in \T, x \in \R^d,
\end{equation}
where $\omega = \alpha + i\beta \in \C$ and $P=\Op(p)\in\Op(S_\mathrm{cl}^{m,\mu})$ is a classical, positive, self-adjoint, elliptic SG-operator with order components $m, \mu>0$. 
More precisely, we study the global regularity on $\T\times\R^d$ of the solutions $u\in\Fc'$ of the equation $Lu =f$. Our approach is based on the expansions 
of $u\in\Fc'$ and $f\in\Fc$ generated by the operator $P$. Therefore, writing, for $t\in\T$,
\begin{equation*}
	u(t) = \sum_{j \in \N^\ast} u_j(t) \phi_j =  \sum_{j \in \N^\ast} u_j(t) \otimes \phi_j
	\ \textrm{ and } \
	f(t) = \sum_{j \in \N^\ast} f_j(t) \phi_j = \sum_{j \in \N^\ast} f_j(t) \otimes \phi_j,
\end{equation*}
we obtain that $Lu=f$ is equivalent to the (infinite) system of ODEs
\begin{equation}\label{diffe-equations}
	D_t u_j(t) +  \omega\lambda_j  u_j(t) = f_j(t), \ t \in \T, \ j \in \N^\ast.
\end{equation}
By standard arguments, the solutions of \eqref{diffe-equations} are given by 
\begin{equation}\label{fator_integrante}
	u_j(t) = u_{j0}\exp\left( -i \lambda_j\omega t \right) +  i\int_{0}^{t}\exp\left(i\lambda_j \omega (s-t) \right) f_j(s)ds,
\end{equation}
for some $u_{j0} \in \C$, $j \in \N^\ast$. 

We may assume that the coefficients  $f_j$ belong to a Gevrey class $\mathscr{G}^{\sigma}=\mathscr{G}^{\sigma}(\T)$ for all $j \in \N^\ast$. Then, 
by the properties of equation \eqref{diffe-equations}, also $u_j$ is in $\mathscr{G}^{\sigma}$ for all $j \in \N^\ast$. Now consider the set
\begin{equation}\label{eq:defZ}
	\cZ = \{j \in \N^\ast; \ \omega \lambda_{j} \in \Z\}.
\end{equation}

\begin{rem}\label{remzfin}
Notice that if $\beta \neq 0$ the set $\cZ$ is finite.
\end{rem}
If $j \notin \cZ$, then $u_{j0}$ is uniquely defined and \eqref{fator_integrante} can be written in either of the two equivalent forms
\begin{align}
	\label{Solu-1-Constant}
	u_j(t) &= \frac{i}{1 - e^{-  2 \pi i\lambda_j\omega}} \int_{0}^{2\pi}\exp\left(-i\lambda_j \omega s\right) f_j(t-s)ds
\intertext{or}
	\label{Solu-2-Constant}
	u_j(t) &= \frac{i}{e^{ 2 \pi i \lambda_j \omega} - 1} \int_{0}^{2\pi}\exp\left(i\lambda_j \omega s \right) f_j(t+s)ds. 
\end{align}
\begin{defn}
	We say that the operator $L$ defined in \eqref{L_constant_coeff} is globally hypoelliptic on $\T\times\R^d$ if
	\[
		u \in \Fc'(\T\times\R^d) \ \textrm{ and } \ Lu \in \Fc(\T\times\R^d)
		\Rightarrow
		u \in \Fc(\T\times\R^d).
	\]
\end{defn}
We point out that, in view of the Fourier characterisations obtained in Section \ref{sec:torussgexp}, 
to study the solutions of $Lu=f$ we must perform an analysis of
$|\partial^k_t u_j(t)|$ as $j\to \infty$, for all $k \in \Z_+$. In particular, we need to consider 
the size of set  $\cZ$, as well  the growth of the sequences 
\[
	\Theta_j = |1 - e^{-  2 \pi i\lambda_j \omega}|^{-1} \ \textrm{ and } \ 
	\Gamma_j =  |e^{ 2 \pi i \lambda_j \omega} - 1|^{-1},
\]
as $j \to \infty$. Loosely speaking, the behaviour of $\{\Theta_j\}_{j \in \N^\ast}$ or $\{\Gamma_j\}_{j \in \N^\ast}$
must not destroy the growth of $f_j$ and its derivatives.  
For instance, when $\beta \neq 0$  it is easy to see that both sequences converge to $1$ as  $j \to \infty$.  
On the other hand, the case $\beta=0$ is more delicate, and is connected with the so-called Diophantine approximations. 
To deal with this situation, we need the next Definition \ref{def:condA}.
\begin{defn}[Condition \condA]\label{def:condA}
We say that  a real number $\alpha$ satisfies Condition \condA\ if there are positive constants $\epsilon$ and  $C$  such that 
\[
	|\tau -\alpha \lambda_{j}| \geq C j^{-\epsilon}, 
\]
for all $(j,\tau) \in \N^\ast \times \Z$.
\end{defn}
If Condition \condA\ fails we may obtain, for any $C>0$ and any $\delta>0$,
a subsequence $\{\lambda_{j_k}\}_{k \in \N}$ and a sequence 
$\{\tau_k\}_{k\in\N} \subset \Z$, depending on $C$ and $\delta$, such that
\begin{equation}\label{ABfails}
	|\alpha \lambda_{j_k} - \tau_k | < C {j_k}^{-\delta}, \quad k\in\N^\ast.
\end{equation}
Indeed, it is enough to choose a suitable decreasing sequence $\{C_k\}_{k\in\N^\ast}\subset(0,C)$
and a suitable increasing sequence $\{\delta_k\}_{k\in\N^\ast}\subset(\delta,+\infty)$ such that $C_k\to0$ and $\delta_k\to\infty$
for $k\to\infty$, see, e.g., Appendix \ref{subs:evsubs}.
Actually, it is possibile to obtain a better statement, as we now show.
\begin{lemma}\label{lemma:useq}
If Condition \condA\ fails, there are a subsequence $\{\lambda_{j_k}\}_{k \in \N^\ast}$ and a sequence $\{\tau_k\}_{k\in\N^\ast} \subset \Z$ such that, for any $C>0$ and any $n \in \N$,
	\begin{equation}\label{univ-seq}
		|\alpha \lambda_{j_k} - \tau_k | < C j_{k}^{-n}, \quad k \to \infty.
	\end{equation}
\end{lemma}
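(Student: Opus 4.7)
The plan is to deduce this strengthening of \eqref{ABfails} by a standard Cantor-type diagonal extraction. The intuition is that failure of Condition \condA\ gives, for every prescribed $(C,\delta)$, an infinite family of exceptionally good integer approximations of $\alpha \lambda_j$; the problem is just to pick a single sequence that realises every decay profile simultaneously, as $k\to\infty$.

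First, I would apply \eqref{ABfails} with the choice $C=1/n$ and $\delta=n$, for each $n\in\N^\ast$. This yields a subsequence $\{\lambda_{j_m^{(n)}}\}_{m\in\N^\ast}$ (with $j_m^{(n)}\to\infty$ as $m\to\infty$, by the usual convention that indices of a subsequence are strictly increasing) and integers $\tau_m^{(n)}\in\Z$ such that
\[
\bigl|\alpha\lambda_{j_m^{(n)}} - \tau_m^{(n)}\bigr| < \frac{1}{n}\,\bigl(j_m^{(n)}\bigr)^{-n}, \quad m\in\N^\ast.
\]
Then I would define the desired diagonal sequence inductively as follows: set $j_1 := j_1^{(1)}$ and $\tau_1 := \tau_1^{(1)}$, and, having chosen $j_1 < j_2 < \dots < j_{k-1}$, choose the smallest $m\in\N^\ast$ for which $j_m^{(k)} > j_{k-1}$, and set $j_k := j_m^{(k)}$, $\tau_k := \tau_m^{(k)}$. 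Such an $m$ exists because $j_m^{(k)}\to\infty$ as $m\to\infty$. By construction, $\{j_k\}_{k\in\N^\ast}$ is strictly increasing, and
\[
\bigl|\alpha\lambda_{j_k} - \tau_k\bigr| < \frac{1}{k}\,j_k^{-k}, \quad k\in\N^\ast.
\]

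Finally, to verify \eqref{univ-seq}, fix arbitrary $C>0$ and $n\in\N$, and set $K := \max\{n, \lceil 1/C\rceil + 1\}$. Then for every $k\geq K$, since $j_k\geq 1$ and $n-k\leq 0$,
\[
\frac{1}{k}\,j_k^{-k} = \frac{j_k^{n-k}}{k}\,j_k^{-n} \leq \frac{1}{k}\,j_k^{-n} < C\,j_k^{-n},
\]
so that $|\alpha\lambda_{j_k}-\tau_k| < C\,j_k^{-n}$ for all $k\geq K$, which is exactly \eqref{univ-seq}.

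There is no real obstacle here: the argument is purely combinatorial and uses only the preparatory statement \eqref{ABfails}. The only mild point to observe is that the diagonal procedure is well defined because each auxiliary subsequence $\{j_m^{(n)}\}_m$ is unbounded, which is automatic from the convention that subsequences are indexed by strictly increasing natural numbers, so no further analysis of the spectral asymptotics of $P$ is required.
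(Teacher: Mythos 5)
Your proof is correct and follows essentially the same diagonalization strategy as the paper's. In fact, you are somewhat more careful on two points the paper glosses over: you take $C=1/n$ in the $n$-th application of \eqref{ABfails}, which makes the constant shrink along the diagonal and thereby handles the ``for any $C>0$'' quantifier cleanly (the paper works with a single fixed $C$ and does not explicitly address arbitrary $C$), and you build the indices $j_k$ inductively so as to guarantee they are strictly increasing, whereas the paper simply sets $j_k:=j_k^{(k)}$ without noting that this choice need not be monotone.
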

\begin{proof}
	We know, by \eqref{ABfails}, that for any $C>0$ and $n \in \N$ there exist a subsequence $\{\lambda_{j_\ell}^{(n)}\}_{\ell \in \N^\ast}$ 
	and a sequence $\{\tau^{(n)}_{\ell}\}_{\ell \in \N^\ast} \subset \Z$ such that
	\begin{equation}
		|\alpha \lambda_{j_\ell}^{(n)} - \tau^{(n)}_{\ell}| < C j_{\ell}^{-n}, \quad\ell \in \N^\ast.
	\end{equation}
	Then, using a diagonalization argument, we define
	the subsequence $\{\lambda_{j_k}\}_{k \in \N^\ast}$ and the sequence $\{\tau_k\}_{k \in \N^\ast} \subset \Z$ as
	\[
		(  \lambda_{j_k}, \tau_k )\doteq \left(\lambda_{j_k}^{(k)}, \tau_k^{(k)}\right).
	\]
	Hence, for any $n \in \N$ we get $|\alpha \lambda_{j_k} - \tau_k | < C j_k^{-n}$, provided that $k \geq n$. This proves the claim.
\end{proof}
\noindent
The next result will be useful in the final step of the proof of Theorem \ref{thm:MainTheormHypo} below.
\begin{lemma}\label{lem:1-exp}
	Let  $\{\beta_j \}_{j \in \N^\ast}$ be a sequence of real numbers. Then, for each $j \in \N^\ast$ there exist $l(j) \in \Z$ such that
	\[
		|1 - e^{2\pi i \beta_j}| \geqslant 4 \ | \beta_j + l(j)|.
	\]
\end{lemma}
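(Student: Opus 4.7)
The plan is straightforward: the statement is a standard distance-to-integer estimate for $|1-e^{2\pi i\beta}|$, and the proof rests on Jordan's inequality.

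First I would choose, for each $j \in \N^\ast$, an integer $l(j) \in \Z$ that moves $\beta_j$ to the fundamental domain around $0$, i.e.,
\[
	l(j) \in \Z \text{ such that } |\beta_j + l(j)| \leq \tfrac{1}{2}.
\]
This is always possible (the nearest integer to $-\beta_j$ works). Since $e^{2\pi i l(j)} = 1$, I would then rewrite
\[
	|1 - e^{2\pi i \beta_j}| = |1 - e^{2\pi i (\beta_j + l(j))}|.
\]
Setting $\gamma = \beta_j + l(j)$, the direct computation
\[
	|1 - e^{2\pi i \gamma}|^2 = (1-\cos 2\pi\gamma)^2 + \sin^2 2\pi\gamma = 2 - 2\cos 2\pi\gamma = 4\sin^2(\pi\gamma)
\]
gives $|1 - e^{2\pi i \gamma}| = 2|\sin(\pi\gamma)|$.

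The key step is then the classical Jordan inequality $|\sin x| \geq \tfrac{2}{\pi}|x|$ for $|x| \leq \pi/2$. Since $|\gamma| \leq 1/2$ implies $|\pi\gamma| \leq \pi/2$, this yields
\[
	|\sin(\pi\gamma)| \geq \tfrac{2}{\pi} \cdot \pi |\gamma| = 2|\gamma|,
\]
and combining with the identity above,
\[
	|1 - e^{2\pi i \beta_j}| = 2|\sin(\pi\gamma)| \geq 4|\gamma| = 4|\beta_j + l(j)|,
\]
as required.

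There is no real obstacle here — the entire content is (i) the choice of the correct integer representative $l(j)$ and (ii) invoking Jordan's inequality. The only thing worth double-checking is that the constant $4$ (rather than, say, $2\pi$) is indeed recovered, which is exactly what the factor $\tfrac{2}{\pi}$ in Jordan's inequality provides after the $2|\sin(\pi\gamma)|$ representation. The bound is sharp in order at $\gamma \to 0$ up to this constant.
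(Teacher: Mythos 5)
Your proof is correct. Note that the paper itself does not give an argument for this lemma — it simply cites Proposition 5.7 of \cite{AGK18} — so your write-up supplies a complete, self-contained elementary proof (nearest-integer reduction, the identity $|1-e^{2\pi i\gamma}|=2|\sin(\pi\gamma)|$, and Jordan's inequality), which is almost certainly the same standard argument used in the cited reference; every step checks out, including the constant $4$ coming from $2\cdot\frac{2}{\pi}\cdot\pi$.
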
 
\begin{proof}
	See Proposition 5.7 in \cite{AGK18}.
\end{proof}
	We can now characterise the global hypoellipticity on $\T\times\R^d$ of the operator $L$ in \eqref{L_constant_coeff}. The subsequent 
	Theorem \ref{thm:MainTheormHypo} is a further main result of this paper.
	\begin{thm}\label{thm:MainTheormHypo}
		The operator $L$ defined in \eqref{L_constant_coeff} is globally hypoelliptic on $\T\times\R^d$
		if and only if either $\beta \neq 0$ or $\beta=0$ and $\alpha$ satisfies Condition \condA.
	\end{thm}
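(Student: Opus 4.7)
My strategy is to reduce $Lu=f$, via the eigenexpansion $u(t)=\sum_j u_j(t)\phi_j$ and $f(t)=\sum_j f_j(t)\phi_j$ with respect to the orthonormal basis of $P$, to the infinite system of scalar ODEs \eqref{diffe-equations}, and then to translate $u\in\Fc$ and $u\in\Fc'$ into Gevrey/polynomial estimates on $u_j$ by means of Theorem \ref{thm:reprF}, Corollaries \ref{cor:estpow}--\ref{cor:suffhypo}, and Theorem \ref{thm:seq}. A preliminary observation used in both directions is that under either of the two assumed hypotheses the set $\cZ$ in \eqref{eq:defZ} is empty: if $\beta\neq 0$, the strict positivity of $P$ forces $\beta\lambda_j\neq 0$; if $\beta=0$ and \condA\ holds, then $|\alpha\lambda_j-\tau|\geq Cj^{-\epsilon}>0$ for every $(j,\tau)$. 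Consequently every $u_j$ is uniquely represented by one of \eqref{Solu-1-Constant} or \eqref{Solu-2-Constant}.

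For sufficiency I select the formula for which $|e^{\pm i\lambda_j\omega s}|\leq 1$ on $[0,2\pi]$: \eqref{Solu-2-Constant} if $\beta>0$, \eqref{Solu-1-Constant} if $\beta<0$, either if $\beta=0$. Differentiating under the integral then gives $\sup_t|\partial_t^{\gamma}u_j|\leq 2\pi\Theta_j\sup_t|\partial_t^{\gamma}f_j|$ (resp.\ with $\Gamma_j$). For $\beta\neq 0$ one has $\Theta_j,\Gamma_j\to 1$ as $\lambda_j\to\infty$, while for $\beta=0$ the combination of Lemma \ref{lem:1-exp} with \condA\ yields $\Theta_j\leq(4C)^{-1}j^{\epsilon}$. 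Feeding in the Gevrey/polynomial bounds on $f_j$ from Corollary \ref{cor:estpow} and absorbing the polynomial factor $j^{\epsilon}$ into $\lambda_j^{-M'}$ (possible since $\lambda_j\asymp j^{\varrho'}$ by Remark \ref{rem:polyboundlambda}), I obtain the characterisation \eqref{eq:sclsa} for the coefficients $u_j$, whence $u\in\Fc$.

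For necessity I argue by contrapositive: assume $\beta=0$ and that \condA\ fails. If the set $\cZ$ is infinite, I take $u=\sum_{j\in\cZ}e^{i\alpha\lambda_j t}\otimes\phi_j$, which satisfies $Lu=0\in\Fc$; $u\in\Fc'$ by Theorem \ref{thm:seq} with $M=0$ (each coefficient is a bounded functional on $\mathcal G^{\sigma}(\T)$), but \eqref{eq:s} fails at $\gamma=0$ since $\sum_{j\in\cZ}\lambda_j^{2M}=\infty$ for $M\geq 0$, so $u\notin\Fc$. If $\cZ$ is finite, Lemma \ref{lemma:useq} produces an increasing subsequence $(j_k,\tau_k)$ with $\alpha\lambda_{j_k}\notin\Z$ and $\epsilon_k:=|\alpha\lambda_{j_k}-\tau_k|<Cj_k^{-k}$, and I set $u=\sum_k e^{-i\tau_k t}\otimes\phi_{j_k}$; again $u\in\Fc'\setminus\Fc$ by the same mechanism. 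To check $Lu=\sum_k\epsilon_k e^{-i\tau_k t}\otimes\phi_{j_k}\in\Fc$, using $\tau_k\asymp\lambda_{j_k}\asymp j_k^{\varrho}$, the sum in \eqref{eq:s} becomes $\sum_k\lambda_{j_k}^{2M}\epsilon_k^2\tau_k^{2\gamma}\lesssim\sum_k j_k^{2\varrho(M+\gamma)-2k}$; splitting at $k_0\sim\varrho(M+\gamma)$ isolates a head of $O(M+\gamma)$ terms, each bounded by $C(M)^{\gamma+1}$, and a tail with exponent $\le-2$, summable by $j_k\ge k$. Together these give a bound of the form $C(M)^{2(\gamma+1)}(\gamma!)^{2\sigma}$ for any $\sigma>1$, so $Lu\in\Fc$ by Theorem \ref{thm:reprF}, contradicting hypoellipticity.

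The principal technical delicacy lies in this final balancing: the super-polynomial bound $\epsilon_k<Cj_k^{-k}$ furnished by the diagonal in Lemma \ref{lemma:useq} must simultaneously compensate the $\gamma$-dependent growth $\tau_k^{2\gamma}\lesssim j_k^{2\varrho\gamma}$ and the $M$-dependent growth $\lambda_{j_k}^{2M}\lesssim j_k^{2\varrho M}$. Crucially, no subexponential smallness of $\epsilon_k$ is needed, precisely because the exponent $k$ in the diagonal decay rate grows with the index and eventually outpaces the linear-in-$\gamma$ frequency exponent, confining the ``dangerous'' contributions to the short head $k\le k_0$; the extraction of the diagonal with $j_k\ge k$ (possibly after a further trivial thinning) makes the tail unconditionally summable and delivers the Gevrey-type bound without any appeal to stronger Liouville-type decay than that provided by the negation of \condA.
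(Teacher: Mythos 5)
You follow the paper's strategy faithfully in both directions: eigenexpansion with respect to $\{\phi_j\}$, reduction to the scalar ODEs \eqref{diffe-equations}, translation of $u\in\Fc$ and $u\in\Fc'$ via Theorem \ref{thm:reprF}, Corollaries \ref{cor:estpow}--\ref{cor:suffhypo} and Theorem \ref{thm:seq}, and the Diophantine dichotomy organised around the set $\cZ$. The sufficiency argument matches the paper's. For the $\cZ$-infinite case of necessity note a sign slip: the kernel of $D_t+\alpha\lambda_j$ on $\T$ is spanned by $e^{-i\alpha\lambda_j t}$, not $e^{+i\alpha\lambda_j t}$.

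The substantive problem is in your head estimate when verifying $Lu\in\Fc$ in the $\cZ$-finite case. After reducing to $\sum_k j_k^{2\varrho(M+\gamma)-2k}$ and splitting at $k_0\sim\varrho(M+\gamma)$, you assert that each head term is bounded by $C(M)^{\gamma+1}$. This is not justified: for $k\approx k_0/2$ the term is roughly $j_{\varrho(M+\gamma)/2}^{\,\varrho(M+\gamma)}$, which outgrows $C(M)^{\gamma+1}$ (indeed it outgrows $C^{\gamma}(\gamma!)^{2\sigma}$ for any fixed $\sigma$) unless $j_k$ grows at most polynomially in $k$. The diagonalisation in Lemma \ref{lemma:useq} supplies no upper bound on the growth of $j_k$ (the good-approximation indices can increase super-exponentially), and the ``further trivial thinning'' you invoke only makes $j_k$ grow faster, so it helps the tail but not the head. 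Your concluding remark that no sub-exponential smallness of $\epsilon_k$ is needed is exactly where the argument breaks: once $M$ and $\gamma$ are fixed, all problematic indices lie in the head, and the bound $\epsilon_k^2<j_k^{-2k}$ controls the tail but does nothing against $j_k^{2\varrho(M+\gamma)}$ for moderate $k$. In fairness, the paper's own treatment of this point (choosing $n$ as a function of $\gamma$ and then appealing to the eventual-in-$k$ estimate of Lemma \ref{lemma:useq}) is also imprecise for the same reason; a complete argument requires either controlling the growth of $j_k$ in the extraction, or damping $u_{j_k}(t)=c_ke^{-i\tau_kt}$ by amplitudes calibrated against both $\epsilon_k$ and the Gevrey norm of $e^{-i\tau_k\cdot}$, which grows like $\exp\bigl(\sigma(|\tau_k|/C)^{1/\sigma}\bigr)$.
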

	
	First, we observe that if $\beta \neq 0$, by the previous considerations, \eqref{Solu-1-Constant} or \eqref{Solu-2-Constant} imply
	\[
		\sup_{t\in\T}\left|\partial_t^\gamma u_j(t)\right| \lesssim \sup _{t \in \T}\left|\partial_t^\gamma f_j(t)\right|,\quad \gamma\in\Z_+,
	\]
	showing that $u$ satisfies the same Gevrey estimates of $f$. The global hypoellipticity of $L$ on $\T\times\R^d$ then follows,
	in view of Corollary \ref{cor:estpow}. Hence, we need focusing our analysis only on the case $\beta = 0$. 

	\smallskip

	Second, when $\beta=0$ the next Lemma \ref{lem:Lemma_Z_inf} provides 
	a necessary condition for the global hypoellipticity of $L$ on $\T\times\R^d$.
	\begin{lemma}\label{lem:Lemma_Z_inf}
		If $\beta=0$ and the set $\cZ$ defined in \eqref{eq:defZ} is infinite, the operator $L$ defined in 
		\eqref{L_constant_coeff} is not globally hypoelliptic on $\T\times\R^d$. 
	\end{lemma}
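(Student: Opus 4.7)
The natural strategy is to exhibit an explicit $u\in\Fc'\setminus\Fc$ with $Lu\in\Fc$. When $\beta=0$ and $j\in\cZ$, the homogeneous ODE $D_t u_j+\alpha\lambda_j u_j=0$ admits the $2\pi$-periodic solution $u_j(t)=\exp(-i\alpha\lambda_j t)$, because $\alpha\lambda_j\in\Z$. Since $\cZ$ is infinite by assumption, enumerate it as $\{j_k\}_{k\in\N^\ast}$ and define
\[
	u_j(t)=\begin{cases} \exp(-i\alpha\lambda_j t), & j\in\cZ,\\ 0, & j\notin\cZ,\end{cases}
	\qquad
	u(t)=\sum_{k\in\N^\ast} u_{j_k}(t)\otimes \phi_{j_k}.
\]
By direct computation $(D_t+\alpha\lambda_j)u_j\equiv 0$ for every $j$, so $Lu=0\in\Fc$.

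The first step is to verify that the formal series defining $u$ actually represents an element of $\Fc'$. This is where I will invoke Theorem \ref{thm:seq}: interpreting each $u_{j_k}$ as a distribution on $\T$ via integration, one has
\[
	|\langle u_{j_k},\psi\rangle|=\left|\int_0^{2\pi} e^{-i\alpha\lambda_{j_k} t}\psi(t)\,dt\right|\le 2\pi\sup_{t\in\T}|\psi(t)|\le 2\pi\,\|\psi\|_{\mathcal{G}^{\sigma,C}(\T)},
\]
for any $\sigma>1$, $C>0$, $\psi\in\mathcal{G}^{\sigma,C}(\T)$, where the last inequality uses the $\gamma=0$ term of the Gevrey norm. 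Thus the hypotheses of Theorem \ref{thm:seq} are fulfilled with $M=0$ and $B=2\pi$ (the coefficients with $j\notin\cZ$ vanish, so they satisfy the bound trivially), and we obtain $u\in\Fc'$.

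The second step is to show $u\notin\Fc$. I will argue by contradiction using Corollary \ref{cor:estpow}: if $u$ were in $\Fc$, then for each $M\in\N$ there would exist $\sigma>1$ and $C>0$ with
\[
	\sup_{t\in\T}|\partial_t^\gamma u_j(t)|\le C^{|\gamma|+1}(\gamma!)^{\sigma}|\widetilde{\lambda}_j|^{-M},\qquad j\in\N^\ast,\ \gamma\in\Z_+.
\]
Taking $\gamma=0$ and $j=j_k$, the left-hand side equals $1$, while $\widetilde{\lambda}_{j_k}\to\infty$ by Remark \ref{rem:polyboundlambda}. Choosing any $M\ge 1$ yields the desired contradiction.

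The argument is largely mechanical once Theorem \ref{thm:seq} and Corollary \ref{cor:estpow} are in place; the only mildly subtle point is to make sure the construction really sits in $\Fc'$ rather than in some larger space. The key structural fact enabling everything is that $e^{-i\alpha\lambda_j t}$ is a genuine periodic function (not merely a quasi-periodic one) precisely because $\alpha\lambda_j\in\Z$ for $j\in\cZ$, which is why the failure of global hypoellipticity is detected by the resonant set $\cZ$.
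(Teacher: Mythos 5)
Your proof is correct and follows essentially the same strategy as the paper's: you build the same explicit solution $u_j(t)=e^{-i\alpha\lambda_j t}$ for $j\in\cZ$ (zero otherwise), observe that $Lu=0$, and then use Theorem \ref{thm:seq} to place $u$ in $\Fc'$ and Corollary \ref{cor:estpow} to conclude $u\notin\Fc$. The only difference is that you spell out the verifications (the $M=0$, $B=2\pi$ bound for Theorem \ref{thm:seq}, and the $\gamma=0$ contradiction with $\widetilde{\lambda}_{j_k}\to\infty$) that the paper leaves implicit.
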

	\begin{proof}
		Since $\beta = 0$ we consider $L = D_t + \alpha P$. By the hypothesis that $\cZ$ is infinite, there is 
		a subsequence  $\{\lambda_{j_k}\}_{k \in \N^\ast}$ such that $\alpha \lambda_{j_k} \in \Z$. Then,
		let $\{u_j\}_{j \in\N^\ast}\subset\cinf(\T)$ be defined, for any $j \in \N^\ast$, by
		\[
		u_j(t) =
		\begin{cases}
			\exp \left(-i \alpha  \lambda_{j_k} t \right), & j= j_k, k\in\N^\ast, \\
			0, & j \neq j_k, k\in\N^\ast.
		\end{cases}
		\]
		Clearly, $u_{j} \in \mathcal{G}^{\sigma}(\T)$ for every $\sigma>1$, and 		
		\[
			D_t u_{j}(t) + \lambda_{j} \alpha u_{j}(t) = 0, \quad j \in \N^\ast.
		\]
		Moreover, by Theorem \ref{thm:seq}, setting
		$u(t) = \sum_{j \in \N^\ast} u_j(t)\otimes \phi_j$,  it also immediately follows $u \in  \Fc'$.
		Similarly, Corollary \ref{cor:estpow} shows that $u \notin \Fc$, observing that $|u_{j_k}(t)| \equiv 1$ for all $k \in\N^\ast$. Then, since $Lu= 0$, 
		we conclude that $L$ is not globally hypoelliptic on $\T\times\R^d$.
	\end{proof}
	It follows that to conclude the proof of Theorem \ref{thm:MainTheormHypo} it is enough to prove the next Proposition \ref{prop:nec_suf_hypo}.
	\begin{prop}\label{prop:nec_suf_hypo}
		The operator $L=D_t + \alpha P$, $\alpha\in\R$, defined in \eqref{L_constant_coeff} is globally hypoelliptic on $\T\times\R^d$
		if and only if $\alpha$ satisfies Condition \condA.
	\end{prop}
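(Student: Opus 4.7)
The plan is to establish the two implications separately.

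For sufficiency (Condition $(\mathscr{A})$ implies hypoellipticity), assume $\alpha$ satisfies Condition $(\mathscr{A})$, which forces $\cZ=\emptyset$. Given $u\in\Fc'$ with $f=Lu\in\Fc$, each Fourier coefficient $u_j$ is uniquely determined by \eqref{Solu-1-Constant}. Applying Lemma \ref{lem:1-exp} with $\beta_j=-\alpha\lambda_j$ furnishes integers $\ell(j)$ with $|1-e^{-2\pi\ii\alpha\lambda_j}|\ge 4|\alpha\lambda_j-\ell(j)|$; combined with Condition $(\mathscr{A})$ this yields $\Theta_j\lesssim j^\epsilon$. Differentiating \eqref{Solu-1-Constant} under the integral gives $\sup_t|\partial_t^\gamma u_j(t)|\le 2\pi\,\Theta_j\sup_s|\partial_t^\gamma f_j(s)|$. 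Using the Gevrey bounds on $f_j$ from Corollary \ref{cor:estpow} together with $\widetilde\lambda_j\gtrsim j^{\varrho'}$ (Remark \ref{rem:polyboundlambda}), the polynomial factor $j^\epsilon$ is absorbed by upgrading $M$ to $M+\lceil\epsilon/\varrho'\rceil$, and Corollary \ref{cor:estpow} in the opposite direction gives $u\in\Fc$.

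For necessity, argue by contrapositive. If $\cZ$ is infinite, Lemma \ref{lem:Lemma_Z_inf} applies; otherwise Condition $(\mathscr{A})$ fails in the polynomial-Diophantine sense, and Lemma \ref{lemma:useq} together with a diagonal procedure produces a strictly increasing $\{j_k\}$ and $\{\tau_k\}\subset\Z$ with $|\alpha\lambda_{j_k}-\tau_k|\le j_k^{-k}$ for all large $k$ (one can further thin so that $\{j_k\}$ grows only polynomially in $k$). Set
\[
u(t,x)=\sum_{k\in\N^\ast} e^{-\ii\tau_k t}\,\phi_{j_k}(x).
\]
Since the functionals $\psi\mapsto\int_0^{2\pi}e^{-\ii\tau_k t}\psi(t)\,dt$ are bounded by $2\pi\|\psi\|_{\mathcal{G}^{\sigma,C}}$ uniformly in $k$, Theorem \ref{thm:seq} applied with $M=0$ gives $u\in\Fc'$. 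Since $|u_{j_k}(t)|\equiv 1$ while $\widetilde\lambda_{j_k}\to\infty$, the decay required by Corollary \ref{cor:estpow} fails and $u\notin\Fc$. A direct computation yields $Lu=\sum_k(\alpha\lambda_{j_k}-\tau_k)\,e^{-\ii\tau_k t}\phi_{j_k}$.

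The crux is verifying $Lu\in\Fc$, for which I would invoke the summed characterisation of Theorem \ref{thm:reprF}: for each $M\in\N$, exhibit $\sigma_M>1$, $C_M>0$ with
\[
\sum_{k\in\N^\ast}\widetilde\lambda_{j_k}^{2M}\,|\alpha\lambda_{j_k}-\tau_k|^2\,|\tau_k|^{2\gamma}\le C_M^{2(\gamma+1)}(\gamma!)^{2\sigma_M},\qquad\gamma\in\Z_+.
\]
Using $|\tau_k|\le c\lambda_{j_k}\le cKj_k^\varrho$, the sum is dominated by $c^{2\gamma}K^{2(M+\gamma)}\sum_k j_k^{-2k+2\varrho(M+\gamma)}$; splitting at $k_0=\lceil\varrho(M+\gamma)\rceil+1$ leaves a tail bounded by the convergent series $\sum_k j_k^{-2}$ and a head controlled by $k_0\cdot j_{k_0-1}^{2\varrho(M+\gamma)}$. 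Once $\{j_k\}$ is arranged to grow polynomially in $k$, this head is of order $(\gamma!)^{O(1)}$, absorbable into $(\gamma!)^{2\sigma_M}$ for $\sigma_M$ sufficiently large. The main obstacle lies precisely in this uniform control in $\gamma$ and $k$: a pointwise-in-$j$ argument through Corollary \ref{cor:estpow} would demand sub-exponential decay of $|\alpha\lambda_{j_k}-\tau_k|$ in $\lambda_{j_k}$, which Lemma \ref{lemma:useq} cannot afford; only the summed form of Theorem \ref{thm:reprF} converts the obstruction into a summability question resolvable from polynomial-in-$j_k$ decay together with careful thinning of the subsequence.
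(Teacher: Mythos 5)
Your sufficiency argument follows the paper's almost verbatim: Lemma \ref{lem:1-exp} together with Condition \condA\ gives $\Theta_j\lesssim j^\epsilon$, and the polynomial loss is absorbed by raising the eigenvalue weight, closing via Corollary \ref{cor:estpow}/\ref{cor:suffhypo}. This part is fine.

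For necessity, you build the same counterexample $u_{j_k}(t)=e^{-\ii\tau_k t}$, $f_{j_k}(t)=(\alpha\lambda_{j_k}-\tau_k)e^{-\ii\tau_k t}$ as the paper. You are right that verifying $f\in\Fc$ is the delicate step (the paper's own line, choosing $n$ with $\gamma\varrho-n<-N$, implicitly lets $n$ depend on $\gamma$, and so does not by itself yield a uniform Gevrey estimate). But your proposed repair does not work. You write that ``one can further thin so that $\{j_k\}$ grows only polynomially in $k$.'' This is impossible: $\{j_k\}$ is a strictly increasing sequence of positive integers, so $j_k\ge k$, and any subsequence $j_{k_\ell}$ satisfies $j_{k_\ell}\ge j_\ell$. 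Thinning a strictly increasing integer sequence can only make it grow \emph{faster}, never slower. Lemma \ref{lemma:useq} gives no control on the size of $j_k$ relative to $k$: the ``bad'' indices could be extremely sparse (say $j_k\sim 2^{2^k}$).

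Consequently the summed estimate you set up,
\[
\sum_{k}\widetilde\lambda_{j_k}^{2M}\,|\alpha\lambda_{j_k}-\tau_k|^2\,|\tau_k|^{2\gamma}\le C_M^{2(\gamma+1)}(\gamma!)^{2\sigma_M},
\]
does not follow from $|\alpha\lambda_{j_k}-\tau_k|\le j_k^{-k}$ when $j_k$ grows super-polynomially: in the head $k\le k_0\approx\varrho(M+\gamma)$, the dominant term $j_{k_0-1}^{2\varrho(M+\gamma)}$ is not majorised by $(\gamma!)^{2\sigma}$ for any fixed $\sigma$ once $j_k$ is, e.g., doubly exponential in $k$. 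What the argument actually needs is a decay of $|\alpha\lambda_{j_k}-\tau_k|$ that is exponential in $\lambda_{j_k}^{1/\sigma}$ (equivalently, sub-exponential in $j_k$), not merely $j_k^{-k}$, and failure of the polynomial Diophantine Condition \condA\ alone does not deliver that without further control of the sparsity of the bad indices. So both the pointwise route via Corollary \ref{cor:estpow} and your summed route via Theorem \ref{thm:reprF} are blocked by the same obstruction, and the thinning claim on which your repair hinges is false.
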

	\begin{proof} We start showing sufficiency. By Lemma \ref{lem:Lemma_Z_inf}, the set $\cZ$ in \eqref{eq:defZ} must be finite. 
		Then we may consider the solutions of equations 
		\[
			D_t u_j(t)+ \alpha  \lambda_j u_j(t)=f_j(t), 
		\]
		given by \eqref{Solu-1-Constant}, for any $j$ large enough. In view of Lemma \ref{lem:1-exp}, there are $C>0$ and $\epsilon>0$ such that
		\[
			| 1- e^{-2 \pi i \alpha \lambda_j} | \geq C j^{-\epsilon}.
		\]
		For $j$ large enough and any $\gamma\in\Z_+$ we then have
		\[
			\sup_{t \in \T} |\partial^\gamma_t u_j(t)| 
			\leq \frac{2\pi}{| 1- e^{-2 \pi i \alpha \lambda_j} |} \sup_{t \in \T}|\partial^\gamma_t f_j(t)| 
			\leq C' j^{\epsilon} \sup_{t \in \T}|\partial^\gamma_t f_j(t)|, \quad t\in\T,
		\]
		for another suitable constant $C'>0$. 
		Now, let $M\in\N$ be fixed and let $N=N_M \in \N$ satisfy $-N \varrho' + \epsilon < -M$, where 
		$\varrho'$ is given by Remark \ref{rem:polyboundlambda}. For this $N$, there are  $C_N>0$ and $\sigma_N>1$ such that
		\[
			\sup_{t \in \T}|\partial^{k}_t f_j(t)| \leq C_N^{k+1} (k!)^{\sigma_N} j^{-N}, \quad k\in\Z_+,
		\]
		which implies
		\[
			\sup_{t \in \T} |\partial^\gamma_t u_j(t)| \leq C_{N_M}^{\gamma+1} (\gamma!)^{\sigma_{N_M}} j^{-M}, \quad \gamma\in\Z_+.
		\]
		By Corollary \ref{cor:suffhypo}, it follows $u \in \Fc$ and we conclude that $L$ is globally hypoelliptic on $\T\times\R^d$.
		
		\smallskip
		To prove necessity, we proceed by contradiction, that is, we assume that 
		Condition \condA\ fails and exhibit a solution $u\in\Fc'\setminus\Fc$ of $Lu=f$ with $f\in\Fc$. 
		Indeed, by Lemma \ref{lemma:useq}, there exist $\{\lambda_{j_k}\}_{k \in \N^\ast}$ and $\{\tau_k\} \subset \Z$ 
		such that for any $n \in \N$ and for any $C>0$ it holds
		\[
			0< |\tau_k - \alpha \lambda_{j_k}| < C j_k^{-n}, \quad k \to \infty.
		\]
		Consider the sequences
		\begin{align*}
			u_j(t) & =
			\begin{cases}
				e^{-i\tau_k t}, &\textrm{ if } j = j_k, k\in\N^\ast,
				\\
				0, &\textrm{ if } j\not=j_k,  k\in\N^\ast,
			\end{cases} 
		\\
			f_j(t) & =  
			\begin{cases}
				(\alpha \lambda_{j_k} - \tau_k)e^{-i\tau_k t}, &\textrm{ if } j = j_k, k\in\N^\ast,
				\\
				0, & \textrm{ if } j\not= j_k, k\in\N^\ast.
			\end{cases}
		\end{align*}
		Since $|u_{j_k}(t)| \equiv 1$, $k\in\N^\ast$, Theorem \ref{thm:seq} and Corollary
		 \ref{cor:estpow} imply $u = \sum_{j \in \N^\ast}u_j\phi_j \in \Fc'\setminus \Fc$. 
		On the other hand, it follows from Remark \ref{rem:polyboundlambda} that
		$|\lambda_{j}| \leq C j^{\varrho}$, $j \to \infty$.
		Since $|\tau_k|\leq 1 + C_1j^{\varrho}_k$, we can estimate, for $\gamma\in\Z_+$,
		\[
			|\tau_k|^{\gamma}
			= \sum_{s=0}^{\gamma}\binom{\gamma}{s} C_1^{s}j_k^{s\varrho} 
			\leq 2^\gamma C_1^{\gamma+1} j_k^{\gamma\varrho} \leq  C_2^{\gamma+1} j_k^{\gamma\varrho}, \quad k \to \infty,
		\]
		for a new constant $C_2$ not depending on $\gamma \in \Z_+$.
		Finally, given $N \in \N$ we choose $n$ such that $\gamma \varrho - n < -N$. Then, 
		\begin{align*}
			|\partial_t^{\gamma}f_{j_k}(t)| \leq C_2^{\gamma + 1}C j_k^{\gamma\varrho - n} 
			\leq C_3^{\gamma +1} j_{k}^{-N},
		\end{align*}
		for every $\gamma \in \Z_+$, which shows $f \in \Fc$. Since 
		$Lu=f$, we conclude that $L$ is not globally hyepoelliptic on $\T\times\R^d$.
	\end{proof}

	\section{Periodic evolution equations and solvability on $\T\times\R^d$ }\label{sec:torussgsolv}
	\setcounter{equation}{0}

	We now  discuss the global solvability on $\T\times\R^d$ of the operator
	\begin{equation}\label{L_constant_coeff_solva}
		L = D_t + \omega P, \ t \in \T, x \in \R^d,
	\end{equation}
	where $\omega = \alpha + i\beta \in \C$ and $P=\Op(p)\in\Op(S_\mathrm{cl}^{m,\mu})$ is a classical, positive, self-adjoint, elliptic SG-operator with order components $m, \mu>0$.
	
	First, we point out that if  $Lu=f \in\Fc$, then, by periodicity of $u_j$, we must have
	\begin{equation}\label{admissible_condition}
		\int_{0}^{2\pi}\exp\left(i\lambda_j \omega t\right) f_j(t)dt = 0,
	\end{equation}
	whenever  $j \in \cZ$, where $f(t) = \sum_{j \in \N^\ast} f_j(t) \phi_j$.
	Hence, we introduce the space of \textit{admissible functions}
	\begin{equation}\label{def_E}
	\mathbb{E} = \{f \in \Fc; \ \eqref{admissible_condition}  \ \textrm{ holds, whenever } j \in \cZ\}.
	\end{equation}

	\begin{defn}
		We say that the operator $L$ defined in \eqref{L_constant_coeff_solva}
		is globally solvable on $\T \times \R^d$ if  for every $f \in \mathbb{E}$ there exists  $u \in \Fc'$ such that $Lu=f$.
	\end{defn}
	
	The connection between hypoellipticity and solvability is given by the next Proposition \ref{prop:GHimpGS}.
	
	\begin{prop}\label{prop:GHimpGS}
		If $L$ is globally hypoelliptic, then it is globally solvable.
	\end{prop}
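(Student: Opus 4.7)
Given $f \in \mathbb{E}$, the natural strategy is to construct a solution $u$ via its eigenfunction expansion. I would write $f = \sum_{j \in \N^\ast} f_j(t) \phi_j$ and seek $u = \sum_{j \in \N^\ast} u_j(t) \phi_j$ with coefficients solving the decoupled ODEs $D_t u_j + \omega\lambda_j u_j = f_j$. For $j \in \cZ$, the admissibility condition \eqref{admissible_condition} guarantees the existence of a $2\pi$-periodic solution $u_j$, and I would fix one such solution (say, the one obtained from \eqref{fator_integrante} by a distinguished choice of $u_{j0}$). For $j \notin \cZ$, I would take the unique periodic solution given by \eqref{Solu-1-Constant}.

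The first observation is that, since $L$ is assumed to be globally hypoelliptic, Theorem \ref{thm:MainTheormHypo} places us in one of the two favourable regimes: either $\beta \ne 0$ (so $\cZ$ is finite by Remark \ref{remzfin}) or $\beta=0$ together with Condition \condA\ (which, applied with $\tau=0$, forces $|\alpha\lambda_j| \ge Cj^{-\epsilon}$ and in particular $\alpha\lambda_j \notin \Z$ for all $j$, so $\cZ = \emptyset$). In both cases, the exceptional set $\cZ$ is finite, so only finitely many $u_j$ need to be chosen by the admissibility prescription, and each such $u_j$ lies in the same Gevrey class $\mathcal{G}^\sigma$ as the corresponding $f_j$, hence contributes harmlessly to the series.

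The crux is the estimate of $\partial_t^\gamma u_j$ for $j \notin \cZ$ large. By Lemma \ref{lem:1-exp}, combined with the hypoellipticity hypothesis (either $\beta \ne 0$, which yields $|1 - e^{-2\pi i \lambda_j\omega}|^{-1} \to 1$, or Condition \condA, which yields $|1 - e^{-2\pi i\alpha\lambda_j}|^{-1} \le C j^{\epsilon}$), there exist $C'>0$ and $\epsilon \ge 0$ such that
\[
  \sup_{t \in \T} |\partial_t^\gamma u_j(t)| \le C' j^{\epsilon} \sup_{t \in \T} |\partial_t^\gamma f_j(t)|,
  \qquad j \notin \cZ,\ \gamma \in \Z_+,
\]
by differentiating \eqref{Solu-1-Constant} under the integral sign. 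Since $f \in \Fc$, Corollary \ref{cor:seq}(ii) tells us that for each $M \in \N$ there exist $\sigma>1$ and $C>0$ with $\|f_j\|_{\mathcal{G}^{\sigma,C}} \le C j^{-M}$. Absorbing the factor $j^{\epsilon}$ into a shifted choice of $M$, I obtain analogous rapid-decay Gevrey estimates for $\{u_j\}_{j \in \N^\ast}$, and Corollary \ref{cor:suffhypo} then yields $u \in \Fc \subset \Fc'$. By construction $Lu = f$, proving global solvability.

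The main obstacle is precisely the passage through the hypoellipticity dichotomy: one must translate the qualitative statement ``$L$ is globally hypoelliptic'' into the quantitative polynomial bound on $|1 - e^{-2\pi i\lambda_j\omega}|^{-1}$ that makes the term-by-term estimate of $u_j$ work. Once this bound is available, the argument is essentially a repetition of the sufficiency half of Proposition \ref{prop:nec_suf_hypo}, augmented with the trivial handling of the finite set $\cZ$ via admissibility.
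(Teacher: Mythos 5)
Your proposal follows the same route as the paper: reduce to the ODE system, use the unique periodic solution \eqref{Solu-1-Constant} for $j\notin\cZ$, invoke the hypoellipticity dichotomy (via Theorem \ref{thm:MainTheormHypo}) to obtain the polynomial bound on $|1-e^{-2\pi i\lambda_j\omega}|^{-1}$, handle the finite set $\cZ$ using admissibility, and conclude $u\in\Fc\subset\Fc'$ by repeating the sufficiency estimates of Proposition \ref{prop:nec_suf_hypo}. Two tiny imprecisions worth noting: to get $\cZ=\emptyset$ from Condition \condA\ you should apply it with $\tau=\alpha\lambda_j$ (not $\tau=0$, which only gives $\alpha\lambda_j\neq0$), and one should, as the paper does, pass to \eqref{Solu-2-Constant} when $\beta>0$ so that the exponential inside the integral stays bounded.
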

	
	\begin{proof}
	By Theorem 	\ref{thm:MainTheormHypo} we have either  $\beta \neq 0$, or $\alpha$ satisfies Condition \condA. 
	Employing eventually \eqref{Solu-2-Constant} in place of \eqref{Solu-1-Constant}, we can assume, without loss of generality, that $\beta \leq 0$, . 
		
		Now, let $f \in \mathbb{E}$ be fixed. If  $j \notin \cZ$, we define
		\begin{equation}\label{sol:j_notin_Z}
			u_j(t) = \frac{i}{1 - e^{-  2 \pi i\lambda_j\omega}} \int_{0}^{2\pi}\exp\left(-i\lambda_j \omega s\right) f_j(t-s)ds,
		\end{equation}
		and for  $j \in \cZ$ we put
		\begin{equation}\label{sol:j_in_Z}
			u_j(t) = \exp(-i\lambda_j \omega t)\int_{0}^{t}\exp\left(i\lambda_j \omega s\right) f_j(s)ds.
		\end{equation}
		
		Clearly, each $u_j(t)$ is a Gevrey function on $\T$ and
		$$
		D_t u_j(t) +    \lambda_j (\alpha + i \beta) u_j(t) = f_j(t),  \ \forall j \in \N^\ast.
		$$

		Since $\cZ$ is a finite set, cf. Remark \ref{remzfin} and Lemma \ref{lem:Lemma_Z_inf},
		estimates for $u_j(t)$ in the case $j \in \cZ$ have no influence. For $j \notin \cZ$ we may use similar arguments as in the proof of
		Proposition \ref{prop:nec_suf_hypo} to conclude that
		$u(t) = \sum_{j \in \N^\ast} u_j(t) \phi_j \in \Fc.$ Then, by $Lu=f$, we see that $L$ is globally solvable.
	\end{proof}
	
	To study solvability, we need the following Definition \ref{def:condB}.
	\begin{defn}[Condition \condB]\label{def:condB}
		We say that  a real number $\alpha$ satisfies Condition \condB\ if there are positive constants $\epsilon$ and  $C$  such that
		\[
		|\tau -\alpha \lambda_{j}| \geq C j^{-\epsilon},
		\]
		for all $(j,\tau) \in \N^\ast \times \Z$ such that $\tau -\alpha \lambda_{j} \neq 0$.
	\end{defn}

	\begin{prop}\label{prop:NecCondSolv}
		If the operator $L$  is globally solvable on $\T\times\R^d$, then either $\beta \neq 0$ or $\beta=0$ and $\alpha$ satisfies Condition \condB.
	\end{prop}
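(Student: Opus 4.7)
The plan is to prove the contrapositive: assuming $\beta=0$ and the failure of Condition \condB, I will construct an admissible $f \in \mathbb{E}$ for which the equation $Lu = f$ has no solution $u \in \Fc'$. The strategy closely parallels the necessity argument for Condition \condA\ in Proposition \ref{prop:nec_suf_hypo}, but now the obstruction must occur at the level of $\Fc'$ rather than merely $\Fc$.

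First, by a diagonalization analogous to Lemma \ref{lemma:useq} (applied to the stricter requirement $\tau - \alpha\lambda_j \neq 0$ of Definition \ref{def:condB}), I extract sequences $\{\lambda_{j_k}\}_{k \in \N^\ast}$ and $\{\tau_k\}_{k \in \N^\ast} \subset \Z$ with $a_k := |\alpha\lambda_{j_k} - \tau_k| > 0$ satisfying, for every $C > 0$ and $n \in \N$, the estimate $a_k < C\, j_k^{-n}$ as $k \to \infty$. Taking $\tau_k$ as the nearest integer to $\alpha\lambda_{j_k}$ ensures $j_k \notin \cZ$. Next I set
\[
f(t, x) = \sum_{k \in \N^\ast} c_k\, e^{-i\tau_k t}\, \phi_{j_k}(x),
\]
with $c_k$ chosen (for instance $c_k = a_k^{\theta}$ for some fixed $\theta \in (0,1)$) small enough to grant $f \in \Fc$, but whose quotients $c_k/a_k = a_k^{\theta-1}$ still grow super-polynomially in $j_k$. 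The membership $f \in \Fc$ will be verified through Corollary \ref{cor:estpow} by the same $\gamma$-$n$ matching argument used in the necessity part of Proposition \ref{prop:nec_suf_hypo}, since $c_k$ inherits super-polynomial decay in $j_k$ from $a_k$, while $|\tau_k| \lesssim j_k^\varrho$ grows only polynomially. Finally, because $f_j = 0$ for $j \notin \{j_k\}_{k \in \N^\ast}$ and no $j_k$ belongs to $\cZ$, the admissibility condition \eqref{admissible_condition} is vacuous, hence $f \in \mathbb{E}$.

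I then suppose, toward a contradiction, that some $u \in \Fc'$ satisfies $Lu = f$. Expanding $u = \sum_j u_j \otimes \phi_j$ and invoking the uniqueness of the periodic solution of the ODE \eqref{diffe-equations} for $j \notin \cZ$ (cf.\ \eqref{Solu-1-Constant}), one is forced to $u_{j_k}(t) = \pm (c_k/a_k)\, e^{-i\tau_k t}$. By $u \in \Fc'$ and the estimate \eqref{eq:thetaseriesest} preceding Lemma \ref{lem:thetaseries}, some $M \in \Z$ will yield, for every $\sigma > 1$ and $C > 0$, a constant $B_{\sigma, C} > 0$ with
\[
|\langle u_{j_k}, \psi\rangle| \leq B_{\sigma, C}\, |\widetilde\lambda_{j_k}|^M\, \|\psi\|_{\mathcal{G}^{\sigma, C}(\T)},\quad \psi \in \mathcal{G}^{\sigma, C}(\T),\ k \in \N^\ast.
\]
Testing this against $\psi(t) = e^{i\tau_k t}$, invoking the elementary bound $\|e^{i\tau_k t}\|_{\mathcal{G}^{\sigma, C}} \lesssim e^{\sigma (|\tau_k|/C)^{1/\sigma}}$, and using Remark \ref{rem:polyboundlambda}, I will reach an upper estimate of the form $a_k^{\theta-1} \lesssim j_k^{M'}\, e^{c\, j_k^{\varrho/\sigma}}$. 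Choosing $\sigma$ sufficiently large (so that $\varrho/\sigma$ is small) and exploiting the diagonal bound $a_k < j_k^{-k}$ produced in the extraction step, this inequality is violated for $k$ large, yielding the desired contradiction.

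The hard part will be this final step: the failure of Condition \condB\ provides only polynomial-type smallness of $a_k$ in $j_k$, while $\|e^{i\tau_k t}\|_{\mathcal{G}^{\sigma, C}}$ grows sub-exponentially in $j_k$ for any fixed $\sigma > 1$, so the contradiction will hinge on a careful balance between the $k$-dependent decay rate of $a_k$ and the $k$-dependent growth rate of $j_k$. I expect the required balance to follow by passing to a suitably sparse sub-subsequence and exploiting the full strength of the diagonal construction underlying Lemma \ref{lemma:useq}, together with the flexibility in the exponent $\theta$ and in the Gevrey parameter $\sigma$.
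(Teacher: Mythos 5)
Your high-level strategy — contrapositive from failure of Condition $\mathrm{(}\mathscr{B}\mathrm{)}$, diagonal extraction of $(j_k,\tau_k)$, construction of a super-polynomially shrinking right-hand side $f$ supported on $\{\phi_{j_k}\}$ with $f\in\mathbb{E}$ because $0<a_k<1$ forces $j_k\notin\cZ$, identification of the unique periodic coefficients $u_{j_k}(t)=\pm(c_k/a_k)e^{-i\tau_kt}$, and a duality test to contradict $u\in\Fc'$ — follows the paper's blueprint. Your choice $c_k=a_k^\theta$ plays the same role as the paper's $j_\ell^{\ell/2}e^{\ell/2}a_\ell$: both make $c_k$ super-polynomially small in $j_k$ and $c_k/a_k$ super-polynomially large.

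The genuine gap is exactly the one you flag as ``the hard part,'' and the balance you hope for is not available from the ingredients you list. Testing against the moving family $\psi_k(t)=e^{i\tau_kt}$ costs you $\|\psi_k\|_{\mathcal G^{\sigma,C}(\T)}\asymp\exp\bigl(c_{\sigma,C}\,|\tau_k|^{1/\sigma}\bigr)$, a stretched exponential in $|\tau_k|\lesssim j_k^{\varrho}$ for every fixed $\sigma>1$ and $C>0$. Against this, the failure of Condition $\mathrm{(}\mathscr{B}\mathrm{)}$ only yields super-\emph{polynomial} smallness of $a_k$ as a function of $j_k$, and nothing controls how fast $j_k$ grows in $k$; passing to a sparser sub-subsequence, as you propose, only accelerates that growth and worsens the ratio
\[
	\frac{|\langle u_{j_k},\psi_k\rangle|}{\widetilde{\lambda}_{j_k}^{M}\,\|\psi_k\|_{\mathcal G^{\sigma,C}}}
	\asymp
	\frac{c_k/a_k}{j_k^{M\varrho}\exp\bigl(c_{\sigma,C}\,j_k^{\varrho/\sigma}\bigr)},
\]
so there is no reason for it to diverge. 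The paper sidesteps the obstruction by testing against a \emph{single, fixed} test function $\psi(t)=(2\pi)^{-1}\sum_{\ell}e^{-\ell/4}e^{i\tau_\ell t}$, whose Fourier coefficients decay exponentially in the \emph{index} $\ell$ rather than in the frequency $\tau_\ell$. Then $\langle u_{j_\ell},\psi\rangle=\mp\,j_\ell^{\ell/2}e^{\ell/4}$ is an explicit super-polynomial quantity, $\|\psi\|_{\mathcal G^{\sigma,C}}$ is a fixed constant, and the contradiction with estimate \eqref{eq:thetaseriesest} (for any $M$) is immediate since $\widetilde{\lambda}_{j_\ell}^{M}\lesssim j_\ell^{M\varrho}$ is merely polynomial. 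You should replace your varying $\psi_k$ with such a fixed $\psi$ whose coefficients are indexed by $\ell$, not weighted through $|\tau_\ell|$.
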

	
	\begin{proof}
		We employ a contradiction argument, that is suppose that $L = D_t + \alpha P$ and 
		$\alpha$ does not satisfy Condition \condB. By Appendix \ref{subs:evsubs}
		there is a sequence $(j_\ell, \tau_\ell) \in \N^\ast \times \Z$ such that $(j_\ell)_\ell$ and $(|\tau_\ell|)_\ell$ are increasing and
		\begin{equation*}
			0<|\tau_{\ell} - \alpha\lambda_{j_\ell}|<
			j_{\ell}^{-\ell}\exp(-\ell).
		\end{equation*}
		
		Because of Remark \ref{rem:polyboundlambda} the sequence
		\begin{equation*}
			f_j(t) = \left\{
			\begin{array}{l}
				0, \ j \neq j_{\ell},  \\
				j_{\ell}^{\ell/2}\exp(\ell/2)|\tau_{\ell} - \alpha\lambda_{j_\ell}|\exp(-i\tau_{\ell}t), \ j =j_{\ell}.
			\end{array}
			\right.
		\end{equation*}
		satisfies
		\[
		|\partial_t^{\gamma}f_{j_\ell}(t)|\leq C^{\gamma+1} j_{\ell}^{-\ell/2+ \gamma \varrho}\exp(-\ell/2),
		\]
		where $C$ does not depend on $\gamma \in \Z_+$. Given $N \in \N$ we choose $\ell_0$ so that $-\ell/2+ \gamma \varrho < -N$, for all $\ell \geq \ell_0$. Then,
		$f = \sum_{j \in \N^\ast}f_j(t)\phi_j \in \Fc$. Since $f_j \equiv 0, j \in \cZ$, we conclude that $f \in \mathbb{E}$.

		Now, if  $u \in \Fc' $ is a solution of $Lu=f$ we should have
		$$
\begin{aligned}
		u_{j_\ell}(t) =& \frac{i}{1 - e^{-  2  \pi i\lambda_{j_\ell} \alpha  }} \int_{0}^{2\pi}\exp\left(-i\lambda_{j_\ell} \alpha  s \right) f_{j_\ell}(t-s)ds \\
& = \frac{i}{1 - e^{-  2  \pi i\lambda_{j_\ell} \alpha  }} j_{\ell}^{\ell/2}|\tau_{\ell} - \alpha\lambda_{j_\ell}| \int_{0}^{2\pi}\exp\left(-i\lambda_{j_\ell} \alpha  s \right)\exp(-i\tau_{\ell}(t-s))ds \\
& =  - j_{\ell}^{\ell/2}\exp(\ell/2)\dfrac{|\tau_{\ell} - \alpha\lambda_{j_\ell}|}{\tau_{\ell} - \alpha\lambda_{j_\ell}} \exp(-i\tau_{\ell}t).
\end{aligned}
		$$
		Choosing $\sigma>1$, $M\in\N$,
		and $\psi \in \mathcal G^\sigma(\mathbb T)$ defined as
		\[
			\psi(t)= (2\pi)^{-1} \sum_{\ell \in \N^*}  \exp(-\ell/4) \exp(i\tau_l t),
		\]
		we see directly that 
\[
	|\langle u_{j_\ell}, \psi \rangle| \lambda_{j_\ell}^{-M} =  \lambda_{j_\ell}^{-M} j_{\ell}^{\ell/2} \exp(\ell/4)  \to \infty, \quad \text{ as }\ell \to \infty,
\]
	which contradicts \eqref{eq:thetaseriesest}. So, $u\notin\Fc'$, which shows that $L$ is not globally solvable on $\T\times\R^d$.
	\end{proof}
	The next Theorem \ref{thm:MainTheormSolv} is the last main result of this paper.
	\begin{thm}\label{thm:MainTheormSolv}
		The operator $L$ defined in \eqref{L_constant_coeff_solva} is globally solvable on $\T\times\R^d$
		if and only if either $\beta \neq 0$ or $\beta=0$ and $\alpha$ satisfies Condition \condB.
	\end{thm}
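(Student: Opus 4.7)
Since necessity is already proved in Proposition \ref{prop:NecCondSolv}, the main task is sufficiency: given $f \in \mathbb{E}$ and assuming either $\beta \neq 0$ or ($\beta = 0$ with $\alpha$ satisfying Condition \condB), produce a solution $u \in \Fc'$ of $Lu = f$. The plan is to build the coefficients $u_j$ of $u = \sum_{j \in \N^\ast} u_j \otimes \phi_j$ by using \eqref{sol:j_notin_Z} when $j \notin \cZ$ and \eqref{sol:j_in_Z} when $j \in \cZ$; the admissibility condition \eqref{admissible_condition} ensures the $2\pi$-periodicity in the second case, and then the representation results of Section \ref{sec:torussgexp} will be used to conclude $u \in \Fc'$.

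When $\beta \neq 0$, the set $\cZ$ is finite by Remark \ref{remzfin} and the denominators $1 - e^{-2\pi i \omega \lambda_j}$ tend in modulus to $1$; the tail can be handled exactly as in Proposition \ref{prop:GHimpGS}, and Corollaries \ref{cor:estpow} and \ref{cor:suffhypo} place $\sum_{j \notin \cZ} u_j \phi_j$ in $\Fc$, while the finite resonant part, being a finite sum of Gevrey periodic functions times Schwartz eigenfunctions, is trivially in $\Fc'$.

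The heart of the argument concerns $\beta = 0$. For $j \notin \cZ$ we have $\alpha \lambda_j \notin \Z$, so Lemma \ref{lem:1-exp} applied with $\beta_j = -\alpha\lambda_j$ combined with Condition \condB\ gives the polynomial lower bound $|1 - e^{-2\pi i \alpha \lambda_j}| \gtrsim j^{-\epsilon}$; substituting in \eqref{sol:j_notin_Z} and invoking the super-polynomial decay of $\sup_{t \in \T} |\partial_t^\gamma f_j(t)|$ in $j$ furnished by Corollary \ref{cor:estpow}, together with $|\lambda_j| \gtrsim j^{\varrho'}$ from Remark \ref{rem:polyboundlambda}, yields Gevrey estimates on $u_j$ with faster-than-polynomial decay in $j$, so Corollary \ref{cor:suffhypo} gives $\sum_{j \notin \cZ} u_j \phi_j \in \Fc$.

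For $j \in \cZ$, which may now be infinite (for instance, $\alpha = 0$ forces $\cZ = \N^\ast$, and this is consistent with Condition \condB), $u_j$ should be read as a distribution on $\T$. Since $|e^{\pm i \alpha \lambda_j t}| = 1$, formula \eqref{sol:j_in_Z} gives $\|u_j\|_{L^\infty(\T)} \leq 2\pi \|f_j\|_{L^\infty(\T)}$, and therefore $|\langle u_j, \psi \rangle| \leq 2\pi \|f_j\|_{L^\infty(\T)} \|\psi\|_{\mathcal{G}^{\sigma,C}(\T)}$, where the constant decays super-polynomially in $j$; Theorem \ref{thm:seq} then places $\sum_{j \in \cZ} u_j \otimes \phi_j$ in $\Fc'$, and summing the two pieces yields $u \in \Fc'$ with $Lu = f$. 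The main obstacle is precisely the possibility that $\cZ$ is infinite, which does not occur under the stronger Condition \condA\ (so the hypoellipticity proof avoids it) and forces the distributional gluing of an $\Fc$-piece with a genuine $\Fc'$-piece just described.
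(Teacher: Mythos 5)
Your proposal is correct and follows the paper's strategy (combining Propositions~\ref{prop:NecCondSolv}, \ref{prop:GHimpGS}, and~\ref{prop:finalstep}): define $u_j$ via \eqref{sol:j_notin_Z} and \eqref{sol:j_in_Z}, estimate using Lemma~\ref{lem:1-exp} together with Condition~\condB, and invoke Theorem~\ref{thm:seq} to conclude $u\in\Fc'$; the paper's Proposition~\ref{prop:finalstep} is just slightly leaner, in that it does not split the series into a resonant and a non-resonant part but bounds $|u_j(t)|$ by $2\pi\sup_t|f_j(t)|$ for $j\in\cZ$ and by $Cj^{\epsilon}\sup_t|f_j(t)|$ for $j\notin\cZ$, and applies Theorem~\ref{thm:seq} once to the whole sequence. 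Your additional observation that $\cZ$ may be infinite under Condition~\condB\ (whereas it is forced to be finite under Condition~\condA\ by Lemma~\ref{lem:Lemma_Z_inf}), and that this is precisely what makes $u$ a genuine element of $\Fc'$ rather than of $\Fc$, is correct and usefully clarifies the structural difference between the solvability and hypoellipticity arguments, even though the paper leaves it implicit.
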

In view of Proposition \ref{prop:NecCondSolv}, we only need to prove 
that $\beta=0$ and $\alpha$ satisfying Condition \condB\ imply global solvability, which we do in the subsequent Proposition \ref{prop:finalstep}.  
	\begin{prop}\label{prop:finalstep}
		If $\beta = 0$  and $\alpha$ satisfies Condition \condB, then $L$
		is globally solvable.
	\end{prop}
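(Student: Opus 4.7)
The plan is to solve the infinite family of ODEs \eqref{diffe-equations} explicitly, produce coefficients $u_j$ whose sup-norms grow only polynomially in $j$, and then invoke Theorem \ref{thm:seq} to assemble them into an element of $\Fc'$ satisfying $Lu=f$.

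Given $f=\sum_{j\in\N^\ast} f_j\otimes\phi_j\in\mathbb{E}$, I would define
\[
u_j(t)=\frac{i}{1-e^{-2\pi i\alpha\lambda_j}}\int_0^{2\pi}e^{-i\alpha\lambda_j s}f_j(t-s)\,ds,\qquad j\notin\cZ,
\]
and
\[
u_j(t)=e^{-i\alpha\lambda_j t}\int_0^t e^{i\alpha\lambda_j s}f_j(s)\,ds,\qquad j\in\cZ.
\]
The admissibility condition \eqref{admissible_condition} is precisely what makes the second formula $2\pi$-periodic, so in both cases $u_j\in\mathcal{G}^{\sigma}(\T)$ for a suitable $\sigma$, and by construction $D_tu_j+\alpha\lambda_j u_j=f_j$ for every $j\in\N^\ast$.

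The crucial step is a uniform bound on $\sup_{t\in\T}|u_j(t)|$. For $j\in\cZ$, the second formula immediately gives $\sup_t|u_j(t)|\le 2\pi\sup_t|f_j(t)|$. For $j\notin\cZ$, the first formula yields
\[
\sup_{t\in\T}|u_j(t)|\le\frac{2\pi}{|1-e^{-2\pi i\alpha\lambda_j}|}\sup_{t\in\T}|f_j(t)|.
\]
Applying Lemma \ref{lem:1-exp} with $\beta_j=-\alpha\lambda_j$ produces integers $l(j)\in\Z$ such that $|1-e^{-2\pi i\alpha\lambda_j}|\ge 4\,|\alpha\lambda_j-l(j)|$, and since $j\notin\cZ$ we have $\alpha\lambda_j-l(j)\ne 0$; Condition \condB\ thus gives $|1-e^{-2\pi i\alpha\lambda_j}|^{-1}\lesssim j^\epsilon$. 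Combining the two cases produces the uniform estimate
\[
\sup_{t\in\T}|u_j(t)|\lesssim j^\epsilon\sup_{t\in\T}|f_j(t)|,\qquad j\in\N^\ast.
\]
Since $f\in\Fc$, Corollary \ref{cor:estpow} applied with $\gamma=0$ tells us that $\sup_t|f_j(t)|$ decays faster than any inverse power of $|\widetilde\lambda_j|$, and hence, by the lower bound $|\widetilde\lambda_j|\gtrsim j^{\varrho'}$ of Remark \ref{rem:polyboundlambda}, faster than any inverse power of $j$. In particular $\sup_t|u_j(t)|$ is uniformly bounded in $j$.

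To conclude, I would view each $u_j$ as an element of $(\mathcal{G}^\sigma(\T))'$ by integration against test functions, so that
\[
|\langle u_j,\psi\rangle|\le 2\pi\sup_{t\in\T}|u_j(t)|\cdot\sup_{t\in\T}|\psi(t)|\le B\,\|\psi\|_{\mathcal{G}^{\sigma,C}}\,|\widetilde\lambda_j|^{0},
\]
for a constant $B$ independent of $j,\sigma,C,\psi$. This is exactly the hypothesis of Theorem \ref{thm:seq} with $M=0$, yielding $u=\sum_{j\in\N^\ast}u_j\otimes\phi_j\in\Fc'$; the termwise identity $Lu=\sum_{j\in\N^\ast}(D_tu_j+\alpha\lambda_j u_j)\otimes\phi_j=\sum_{j\in\N^\ast}f_j\otimes\phi_j=f$ then establishes global solvability. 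The main delicate point will be the uniform control over all $j$ simultaneously---note that Condition \condB, unlike Condition \condA, permits $\cZ$ to be infinite---but the polynomial estimate on $|1-e^{-2\pi i\alpha\lambda_j}|^{-1}$ together with the super-polynomial decay of $\sup_t|f_j(t)|$ handles both regimes through the same bound.
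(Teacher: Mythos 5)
Your proposal follows essentially the same route as the paper's proof: define $u_j$ by the integrating-factor formula for $j\in\cZ$ (where admissibility guarantees periodicity) and by the resolvent-type formula \eqref{Sol-beta=0-jnotinZ} for $j\notin\cZ$, obtain the bound $\sup_t|u_j(t)|\le 2\pi\sup_t|f_j(t)|$ in the first case and $\sup_t|u_j(t)|\lesssim j^\epsilon\sup_t|f_j(t)|$ in the second via Lemma \ref{lem:1-exp} and Condition \condB, and then invoke Theorem \ref{thm:seq} to conclude $u\in\Fc'$ and $Lu=f$. You spell out a few steps the paper leaves implicit (the role of \eqref{admissible_condition} in ensuring periodicity, and how the $j^\epsilon$ factor is absorbed by the super-polynomial decay of $\sup_t|f_j|$ from Corollary \ref{cor:estpow}), but the argument is the same.
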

	\begin{proof}
		Consider $f \in \mathbb{E}$ be fixed. We set
		\begin{equation}\label{sol:MainThm_j_in_Z}
			u_j(t) = \exp(-i\lambda_j \alpha t)\int_{0}^{t}\exp\left(i\lambda_j \alpha s\right) f_j(s)ds,
		\end{equation}
		in case $j \in \cZ$ and
		\begin{align}\label{Sol-beta=0-jnotinZ}
			u_j(t) = \frac{i}{1 - e^{-  2  \pi i\lambda_j \alpha  }} \int_{0}^{2\pi}\exp\left(-i\lambda_j \alpha s \right) f_j(t-s)ds,
		\end{align}
		if $j \notin \cZ$. Then, 
		$$
		|u_j(t)| \leq 2\pi \sup_{t \in \mathbb{T}}|f_j(t)|, \ j \in \cZ,
		$$
		and
		$$
		|u_j(t)| \leq C j^{\epsilon} \sup_{t \in \mathbb{T}}|f_j(t)|, \ j \notin \cZ.
		$$

		It follows from Theorem \ref{thm:seq} that $u(t) = \sum_{j \in \N^\ast}u_j(t) \phi_j \in \Fc'$ and $Lu=f$.

	\end{proof}

	\appendix

	\section{The calculus of SG pseudo-differential operators}\label{sec:sgcalc}
	\setcounter{equation}{0}

We here recall some basic definitions and facts about the $SG$-calculus of pseudo-differential 
operators, through
standard material appeared, e.g., in \cite{ACS19b,CPS1} and elsewhere (sometimes with slightly different notational choices).
A detailed description of the calculus can be found in \cite{cordes}.

The class $S ^{m,\mu}=S ^{m,\mu}(\R^{d})$ of $SG$ symbols of order $(m,\mu) \in \R^2$ is given by all the functions 
$a(x,\xi) \in \mathscr C^\infty(\R^d\times\R^d)$
with the property
that, for any multiindices $\alpha,\beta \in \N_0^d$, there exist
constants $C_{\alpha\beta}>0$ such that the conditions \eqref{eq:disSG}
hold (see \cite{cordes,ME,PA72}). We often omit the base spaces $\R^d$, $\R^{2d}$, etc., from the notation.

For $m,\mu\in\R$, $\ell\in\N_0$,
\[
	\vvvert a \vvvert^{m,\mu}_\ell
	= 
	\max_{|\alpha+\beta|\le \ell}\sup_{x,\xi\in\R^d}\x^{-m+|\alpha|} 
	                                                                     \csi^{-\mu+|\beta|}
	                                                                    | \partial^\alpha_x\partial^\beta_\xi a(x,\xi)|, \quad a\in\ S^{m,\mu},
\]
is a family of seminorms, defining  the Fr\'echet topology of $S^{m,\mu}$.

The corresponding
classes of pseudo-differential operators $\Op (S ^{m,\mu})=\Op (S ^{m,\mu}(\R^d))$ are given by
\begin{equation}\label{eq:psidos}
	(\Op(a)u)(x)=(a(\cdot,D)u)(x)=(2\pi)^{-d}\int e^{\ii x\xi}a(x,\xi)\hat{u}(\xi)d\xi,  
\end{equation}
where $a\in S^{m,\mu}(\R^d),u\in\mathscr S(\R^d)$ and extended by duality to $\mathscr S^\prime(\R^d)$.
The operators in \eqref{eq:psidos} form a
graded algebra with respect to composition, that is,
$$
\Op (S ^{m_1,\mu _1})\circ \Op (S ^{m_2,\mu _2})
\subseteq \Op (S ^{m_1+m_2,\mu _1+\mu _2}).
$$
The symbol $c\in S ^{m_1+m_2,\mu _1+\mu _2}$ of the composed operator $\Op(a)\circ\Op(b)$,
$a\in S ^{m_1,\mu _1}$, $b\in S ^{m_2,\mu _2}$, admits the asymptotic expansion
\begin{equation}
	\label{eq:comp}
	c(x,\xi)\sim \sum_{\alpha}\frac{i^{|\alpha|}}{\alpha!}\,D^\alpha_\xi a(x,\xi)\, D^\alpha_x b(x,\xi),
\end{equation}
which implies that the symbol $c$ equals $a\cdot b$ modulo $S ^{m_1+m_2-1,\mu _1+\mu _2-1}$.

Notice that
\[
	 S ^{-\infty,-\infty}=S ^{-\infty,-\infty}(\R^{d})= \bigcap_{(m,\mu) \in \R^2} S ^{m,\mu} (\R^{d})
	 =\mathscr S(\R^{2d}),
\]
and, by definition,
\[
	S^{-\infty,-\infty}\subset S^{m,\mu}\subset S^{s,\sigma}, \quad m,s,\mu,\sigma\in\R, m\le s, \mu\le\sigma.
\]
For any $a\in S^{m,\mu}$, $(m,\mu)\in\R^2$,
$\Op(a)$ is a linear continuous operator from $\mathscr S(\R^d)$ to itself, extending to a linear
continuous operator from $\mathscr S^\prime(\R^d)$ to itself, and from
$H^{r,\rho}(\R^d)$ to $H^{r-m,\rho-\mu}(\R^d)$, with $H^{r,\rho}$ defined in \eqref{eq:skspace}.
When $r\ge r^\prime$ and $\rho\ge\rho^\prime$, the continuous embedding 
$H^{r,\rho}\hookrightarrow H^{r^\prime,\rho^\prime}$ holds true. It is compact when $r>r^\prime$ and $\rho>\rho^\prime$.
Since $H^{r,\rho}=\jap^{-r}\,H^{0,\rho}=\jap^{-r}\, H^\rho$, with $H^\rho$ the usual Sobolev space of order $\rho\in\R$, we 
find $\rho>k+\dfrac{d}{2} \Rightarrow H^{r,\rho}\hookrightarrow C^k(\R^d)$, $k\in\N_0$. One actually finds
\begin{equation}
\begin{aligned}\label{eq:spdecomp}
	& \bigcap_{r,\rho\in\R}H^{r,\rho}(\R^d)=H^{\infty,\infty}(\R^d)=\mathscr S(\R^d), \\
	& \bigcup_{r,\rho\in\R}H^{r,\rho}(\R^d)=H^{-\infty,-\infty}(\R^d)=\mathscr S^\prime(\R^d),
 \end{aligned}
\end{equation}
as well as, for the space of \textit{rapidly decreasing distributions}, see \cite[Chap. VII, \S 5]{schwartz}, 
\begin{equation}\label{eq:rdd}
	\mathscr S^\prime(\R^d)_\infty=\bigcap_{z\in\R}\bigcup_{\zeta\in\R}H^{z,\zeta}(\R^d).
\end{equation}
The continuity property of
the elements of $\Op(S^{m,\mu})$ on the scale of spaces $H^{r,\rho}(\R^d)$, $(m,\mu),(r,\rho)\in\R^2$, is expressed 
more precisely in the next theorem.
\begin{thm}[{\cite[Chap. 3, Theorem 1.1]{cordes}}] \label{thm:sobcont}
	Let $a\in S^{m,\mu}(\R^d)$, $(m,\mu)\in\R^2$. Then, for any $(r,\rho)\in\R^2$, 
	$\Op(a)\in\mathcal{L}(H^{r,\rho}(\R^d),H^{r-m,\rho-\mu}(\R^d))$, and there exists a constant $C>0$,
	depending only on $d,m,\mu,r,\rho$, such that
	\begin{equation}\label{eq:normsob}
		\|\Op(a)\|_{\scrL(H^{r,\rho}(\R^d), H^{r-m,\rho-\mu}(\R^d))}\leq 
		C\vvvert a \vvvert_{\left[\frac{d}{2}\right]+1}^{m,\mu},
	\end{equation}
	where $[t]$ denotes the integer part of $t\in\R$.
\end{thm}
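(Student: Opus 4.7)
The plan is to reduce the general mapping property to the core case of $L^2$-boundedness for operators with symbols in $S^{0,0}$, and then transfer the estimate back to arbitrary orders via conjugation by order reducers. I would proceed in three steps.

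First, I would introduce SG order reducers $\Pi_{r,\rho}\in\Op(S^{r,\rho})$, invertible modulo smoothing operators with parametrices in $\Op(S^{-r,-\rho})$, built from the symbols $\lambda_{r,\rho}(x,\xi)=\langle x\rangle^r\langle\xi\rangle^\rho$ already used in the paper. By the definition \eqref{eq:skspace} of the Sobolev-Kato norm, $\Pi_{r,\rho}$ is an isomorphism $H^{r,\rho}\to L^2$. Boundedness of $\Op(a)\colon H^{r,\rho}\to H^{r-m,\rho-\mu}$ is then equivalent, up to smoothing remainders that are trivially bounded on $L^2$, to boundedness of a composition
\[
B=\Pi_{r-m,\rho-\mu}\circ \Op(a)\circ \Pi_{-r,-\rho}\colon L^2\to L^2.
\]
By the graded algebra property \eqref{eq:comp}, $B\in\Op(S^{0,0})$, and its symbol seminorms are controlled linearly by those of $a$, with multiplicative constants depending only on the (fixed) seminorms of the order reducers, hence only on $d,m,\mu,r,\rho$.

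Second, the heart of the argument is the $L^2$-boundedness of $\Op(b)$ for $b\in S^{0,0}$, with the quantitative estimate $\|\Op(b)\|_{\scrL(L^2)}\lesssim \vvvert b\vvvert^{0,0}_{\left[d/2\right]+1}$. I would compute the Schwartz kernel
\[
K_b(x,y)=(2\pi)^{-d}\int e^{i(x-y)\xi}\,b(x,\xi)\,d\xi
\]
and, integrating by parts with the identity $(1-\Delta_\xi)^{N}e^{i(x-y)\xi}=\langle x-y\rangle^{2N}e^{i(x-y)\xi}$ for $N=\left[d/2\right]+1$, derive the pointwise bound $|K_b(x,y)|\lesssim \vvvert b\vvvert^{0,0}_{2N}\,\langle x-y\rangle^{-2N}$, which is uniformly integrable in either variable. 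Schur's test then gives an $L^2$ bound involving at most $2N$ derivatives of $b$. To bring the count down to the sharp $N=\left[d/2\right]+1$, one invokes a Cotlar-Stein almost-orthogonality argument as in the Calder\'on-Vaillancourt theorem, exploiting the symmetric decay estimates that $b\in S^{0,0}$ enjoys in both $x$ and $\xi$.

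The third step is simply to chain the two previous ones: apply the $L^2$ estimate to $B$ and unwind the conjugation to obtain \eqref{eq:normsob}. I expect the main obstacle to lie in step two, specifically in tightening the derivative count to exactly $\left[d/2\right]+1$. The na\"ive Schur estimate inevitably introduces $2N$ derivatives, and bringing it down to $N$ requires either a dyadic decomposition of the symbol combined with Cotlar-Stein patching, or the $T^{*}T$ argument of Calder\'on-Vaillancourt adapted to the SG setting. A secondary technical point in step one is verifying that the composition with order reducers truly costs only constants depending on $d,m,\mu,r,\rho$ and not extra seminorms of $a$: this follows because, in the asymptotic expansion \eqref{eq:comp}, all derivatives falling on $\lambda_{\pm r,\pm\rho}$ produce fixed functions, while $a$ appears differentiated at most to the fixed order $\left[d/2\right]+1$ demanded by the final $L^2$ estimate on $B$.
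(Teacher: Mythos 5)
The paper does not prove this statement: it is quoted verbatim as a known result from \cite[Chap. 3, Theorem 1.1]{cordes}, so there is no in-paper proof to compare against. Evaluating your proposal on its own merits:

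Your overall strategy --- conjugate by SG order reducers to reduce the general $(r,\rho)\to(r-m,\rho-\mu)$ mapping property to $L^2$-boundedness of an $S^{0,0}$ operator, then chain everything back --- is the standard one, and is essentially the route in Cordes's book. Steps 1 and 3 are correct in outline. Two minor cautions in step 1: $\Op(\lambda_{r,\rho})=\langle\cdot\rangle^r\langle D\rangle^\rho$ is \emph{exactly} an isometry $H^{r,\rho}\to L^2$ by the definition \eqref{eq:skspace}, so you do not need parametrices and smoothing remainders at all; and the composition formula \eqref{eq:comp} is only an asymptotic expansion, so controlling the $S^{0,0}$ seminorms of $B$ by finitely many seminorms of $a$ requires a genuine quantitative composition estimate, not inspection of the leading terms.

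The genuine gap is in step 2. The claimed pointwise kernel bound $|K_b(x,y)|\lesssim\vvvert b\vvvert^{0,0}_{2N}\langle x-y\rangle^{-2N}$ for $b\in S^{0,0}$, $N=[d/2]+1$, is false. After integrating by parts one has
\[
\langle x-y\rangle^{2N}K_b(x,y)=(2\pi)^{-d}\int e^{i(x-y)\xi}\,(1-\Delta_\xi)^N b(x,\xi)\,d\xi,
\]
and $(1-\Delta_\xi)^N b$ is a sum of $\partial_\xi^\beta b$ with $0\le|\beta|\le 2N$. For $b\in S^{0,0}$ (SG or otherwise) the term $|\beta|=0$, namely $b$ itself, has no $\xi$-decay, so the $\xi$-integral diverges. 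The Schwartz kernel of an $S^{0,0}$ operator is generically a distribution, not a function --- take $b\equiv1$, whose kernel is $\delta(x-y)$ --- so no pointwise bound of the above form can hold, and Schur's test is simply unavailable. Consequently, the Cotlar--Stein / Calder\'on--Vaillancourt argument cannot be presented as a ``tightening'' of a prior Schur estimate from $2N$ to $N$ derivatives: it is not a refinement of an existing bound but the only viable route. A correct step 2 must go directly through an almost-orthogonality decomposition, or through Cordes's own argument (splitting the operator into a multiplication factor and a convolution factor, which is precisely what yields the sharp count $[d/2]+1$), bypassing pointwise kernel estimates entirely.
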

The class $\caO(m,\mu)$ of the \textit{operators of order $(m,\mu)$} is introduced as follows, see, e.g., \cite[Chap. 3, \S 3]{cordes}.
\begin{defn}\label{def:ordmmuopr}
	A linear continuous operator $A\colon\caS(\R^d)\to\caS(\R^d)$
	belongs to the class $\caO(m,\mu)$, $(m,\mu)\in\R^2$, of the operators of order $(m,\mu)$ if, for any $(r,\rho)\in\R^2$,
	it extends to a linear continuous operator $A_{r,\rho}\colon H^{r,\rho}(\R^d)\to H^{r-m,\rho-\mu}(\R^d)$. We also define
	\[
		\caO(\infty,\infty)=\bigcup_{(m,\mu)\in\R^2} \caO(m,\mu), \quad
		\caO(-\infty,-\infty)=\bigcap_{(m,\mu)\in\R^2} \caO(m,\mu).		
	\]
\end{defn}
\begin{rem}\label{rem:O}
	\begin{enumerate}
		\item[(i)] Trivially, any $A\in\caO(m,\mu)$ admits a linear continuous extension 
		$A_{\infty,\infty}\colon\caS^\prime(\R^d)\to\caS^\prime(\R^d)$. In fact, in view of \eqref{eq:spdecomp}, it is enough to set
		$A_{\infty,\infty}|_{H^{r,\rho}(\R^d)}= A_{r,\rho}$.
		\item[(ii)] Theorem \ref{thm:sobcont} implies $\Op(S^{m,\mu}(\R^d))\subset\caO(m,\mu)$, $(m,\mu)\in\R^2$.
		\item[(iii)] $\caO(\infty,\infty)$ and $\caO(0,0)$ are algebras under operator multiplication, $\caO(-\infty,-\infty)$ is an ideal
		of both  $\caO(\infty,\infty)$ and $\caO(0,0)$, and 
		$\caO(m_1,\mu_1)\circ\caO(m_2,\mu_2)\subset\caO(m_1+m_2,\mu_1+\mu_2)$.
	\end{enumerate}
\end{rem}
\noindent
The following characterisation of the class $\caO(-\infty,-\infty)$ is often useful.
\begin{prop}[{\cite[Ch. 3, Prop. 3.4]{cordes}}] \label{thm:smoothing}
	The class $\caO(-\infty,-\infty)$ coincides with $\Op(S^{-\infty,-\infty}(\R^d))$ and with the class of smoothing operators,
	that is, the set of all the linear continuous operators $A\colon\caS^\prime(\R^d)\to\caS(\R^d)$. All of them coincide with the
	class of linear continuous operators $A$ admitting a Schwartz kernel $k_A$ belonging to $\caS(\R^{2d})$. 
\end{prop}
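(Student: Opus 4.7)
The plan is to establish a cyclic chain of inclusions:
\[
\Op(S^{-\infty,-\infty}) \subseteq \caO(-\infty,-\infty) \subseteq \{A\colon \Sc'\to\Sc \text{ lin. cont.}\} \subseteq \{A \text{ with kernel in } \Sc(\R^{2d})\} \subseteq \Op(S^{-\infty,-\infty}),
\]
after which all four classes coincide.

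First I would dispose of the easy inclusions. For $\Op(S^{-\infty,-\infty})\subseteq\caO(-\infty,-\infty)$, since $S^{-\infty,-\infty}=\bigcap_{m,\mu}S^{m,\mu}$, Theorem \ref{thm:sobcont} gives $\Op(a)\in\caO(m,\mu)$ for every $(m,\mu)\in\R^2$, so $\Op(a)\in\caO(-\infty,-\infty)$. For the step $\caO(-\infty,-\infty)\subseteq\{A\colon\Sc'\to\Sc \text{ continuous}\}$, I would use the representations \eqref{eq:spdecomp}: given $A\in\caO(-\infty,-\infty)$ and $u\in\Sc'=\bigcup H^{r,\rho}$, the extension $A_{r,\rho}\colon H^{r,\rho}\to H^{r-m,\rho-\mu}$ exists for all $(m,\mu)$, so $Au\in\bigcap H^{s,\sigma}=\Sc$; the continuity with respect to the inductive/projective topologies is immediate by the universal properties. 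Conversely, a continuous $A\colon\Sc'\to\Sc$ restricts to continuous maps $H^{r,\rho}\hookrightarrow\Sc'\to\Sc\hookrightarrow H^{r-m,\rho-\mu}$ for any $(m,\mu)$, placing $A$ in $\caO(-\infty,-\infty)$. Thus the second class equals $\caO(-\infty,-\infty)$.

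Next I would invoke the Schwartz kernel theorem. Any continuous $A\colon\Sc'\to\Sc$ composed with the inclusion $\Sc\hookrightarrow\Sc'$ is in particular a continuous map $\Sc\to\Sc'$, hence has a kernel $k_A\in\Sc'(\R^{2d})$. To upgrade $k_A$ to a Schwartz function, I would use that for $\varphi,\psi\in\Sc(\R^d)$,
\[
\langle k_A,\varphi\otimes\psi\rangle = \langle A\psi,\varphi\rangle,
\]
and the right-hand side defines a separately continuous bilinear form whose values depend continuously on $(\varphi,\psi)\in\Sc\times\Sc$ into $\C$, with the target actually landing in $\Sc$ on the $A\psi$ side. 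Using the nuclearity of $\Sc$ and the fact that $\Sc(\R^{2d})\simeq\Sc(\R^d)\,\hat{\otimes}\,\Sc(\R^d)$, this forces $k_A\in\Sc(\R^{2d})$; concretely, one checks that $x^\alpha\partial_x^\beta y^\gamma\partial_y^\delta k_A$ is a bounded distribution on $\R^{2d}$ for all multiindices, which combined with the regularizing action of $A$ yields Schwartz decay. In the reverse direction, if $k_A\in\Sc(\R^{2d})$, then $(Au)(x)=\langle u(\cdot),k_A(x,\cdot)\rangle$ clearly produces an element of $\Sc(\R^d)$ whenever $u\in\Sc'(\R^d)$.

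Finally, to close the loop I would reconstruct a symbol $a\in S^{-\infty,-\infty}=\Sc(\R^{2d})$ from $k_A\in\Sc(\R^{2d})$. Comparing \eqref{eq:psidos} with the kernel representation $Au(x)=\int k_A(x,y)u(y)\,dy$ after the change of variables $y=x-z$, I would set
\[
a(x,\xi) = \int_{\R^d} e^{-iz\xi}\,k_A(x,x-z)\,dz,
\]
i.e., the partial Fourier transform in $z$ of the map $(x,z)\mapsto k_A(x,x-z)$, which is itself Schwartz on $\R^{2d}$. Then $a\in\Sc(\R^{2d})=S^{-\infty,-\infty}$ and a direct computation confirms $\Op(a)=A$. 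The main obstacle in the argument is the intermediate regularity upgrade $k_A\in\Sc'\Rightarrow k_A\in\Sc$, which rests on the nuclear tensor product identification of $\Sc(\R^{2d})$ and the full regularizing nature of $A$; once this is in place, the remaining steps are essentially the Fourier-inversion bookkeeping sketched above.
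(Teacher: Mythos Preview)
The paper does not supply its own proof of this proposition: it is quoted verbatim from \cite[Ch.~3, Prop.~3.4]{cordes} and left unproved. Your cyclic-inclusion argument is correct and is essentially the standard route (and presumably the one in Cordes): mapping properties give $\Op(S^{-\infty,-\infty})\subseteq\caO(-\infty,-\infty)$; the identifications \eqref{eq:spdecomp} as inductive/projective limits give the equivalence with $\scrL(\Sc',\Sc)$; the kernel theorem together with nuclearity of $\Sc$ and the isomorphism $\scrL(\Sc',\Sc)\simeq\Sc\,\hat\otimes\,\Sc\simeq\Sc(\R^{2d})$ produces a Schwartz kernel; and the partial Fourier transform of $(x,z)\mapsto k_A(x,x-z)$ recovers a symbol in $\Sc(\R^{2d})=S^{-\infty,-\infty}$.

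The only place where your write-up is a bit thin is the ``regularity upgrade'' $k_A\in\Sc'(\R^{2d})\Rightarrow k_A\in\Sc(\R^{2d})$. You correctly flag this as the crux and name the right ingredients, but the sentence ``one checks that $x^\alpha\partial_x^\beta y^\gamma\partial_y^\delta k_A$ is a bounded distribution'' is not quite an argument. The clean way is to invoke directly the topological identification $\scrL_b(\Sc',\Sc)\simeq\Sc(\R^d)\,\hat\otimes_\pi\,\Sc(\R^d)\simeq\Sc(\R^{2d})$, valid because $\Sc$ is nuclear and Fr\'echet (this is in \cite{treves}, already in the paper's bibliography); the kernel map is then an isomorphism, and no separate regularity computation is needed.
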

An operator $A=\Op(a)$ and its symbol $a\in S ^{m,\mu}$ are called \emph{elliptic}
(or $S ^{m,\mu}$-\emph{elliptic}) if \eqref{eq:sgell} holds true.
If $R=0$, $a^{-1}$ is everywhere well-defined and smooth, and $a^{-1}\in S ^{-m,-\mu}$.
If $R>0$, then $a^{-1}$ can be extended to the whole of $\R^{2d}$ so that the extension $\widetilde{a}_{-1}$ satisfies $\widetilde{a}_{-1}\in S ^{-m,-\mu}$.
An elliptic $SG$ operator $A \in \Op (S ^{m,\mu})$ admits a
parametrix $A_{-1}\in \Op (S ^{-m,-\mu})$ such that
\[
A_{-1}A=I + R_1, \quad AA_{-1}= I+ R_2,
\]
for suitable $R_1, R_2\in\Op(S^{-\infty,-\infty})$, where $I$ denotes the identity operator. 
In such a case, $A$ turns out to be a Fredholm
operator on the scale of functional spaces $H^{r,\rho}$,
$(r,\rho)\in\R^2$. If $A\in\Op(S^{m,\mu})$ is elliptic and $B\in\Op(S^{m',\mu'})$ with $m'<m,\mu'<\mu$, then $A+B$ is also elliptic.

 
For $r,\rho \in \mathbb R$, set $\lambda_{r,\rho}(x,\xi)=\lambda_r(x)\cdot\lambda_\rho(\xi)=\langle x \rangle^{r} \langle \xi \rangle^{\rho}$ 
and consider the corresponding operator $\Lambda_{r,\rho}=\mathrm{Op}(\lambda_{r,\rho})$, the standard \textit{SG order reductions}. 	
In addition to $\Lambda_{r,\rho}$, one can also consider the SG order reductions defined by
\[
	\Pi_{m,\mu} = \langle D\rangle^{\mu/2} \langle \cdot\rangle^{m} \langle D \rangle^{\mu/2}, \quad m,\mu\in\R.
\]
Then, $\Pi_{m,\mu}$ is self-adjoint and elliptic (actually, invertible). Indeed, the operators $\langle D \rangle^\mu$ and $\langle \cdot \rangle^m$ 
are self-adjoint, which yields that $\Pi_{m,\mu}$ is self-adjoint as well. Ellipticity is straightforward. Moreover, for $m,\mu>0$, $u\in H^{m,\mu}$,
\[
(\Pi_{m,\mu} u, u) = ( \langle \cdot \rangle^{m/2} \langle D \rangle^{\mu/2} u,  \langle \cdot \rangle^{m/2} \langle D \rangle^{\mu/2} u) 
= \| u \|^2_{H^{m/2,\mu/2}} \geq 0,
\]
which shows that $\Pi_{m,\mu}$ is positive. 

Fix now $m, \mu \in\N\setminus\{0\}$ and consider a partial differential operator $P$ of the form
\begin{equation}\label{eq:opdiff}
	P=p(x, D)=\sum_{\substack{|\alpha| \leq m \\|\beta| \leq \mu}} c_{\alpha\beta} x^\alpha D_x^\beta, \quad D_x^\alpha=(-i)^{|\alpha|} \partial_x^\alpha,
\end{equation}
with coefficients $c_{\alpha\beta} \in \mathbb{C}$. Moreover, suppose that $P$ 
satisfies the ellipticity property
\[
	|p(x,\xi)| \geq C \langle x \rangle^{m} \langle \xi \rangle^{\mu}, \quad (x,\xi)\in\R^d\times\R^d, |x|+|\xi| \geq R \ge0,
\]
with $C$ a real positive constant. The latter is equivalent to the fact that the principal homogeneous symbols of $P$ do not vanish, namely
(cf., e.g., \cite[Ch. 3]{NR}, (3.0.6), (3.0.7), (3.0.8)):
\[
\begin{aligned}
	[\sigma_\psi(P)](x,\xi)=[\sigma_\psi^\mu(P)](x,\xi)&=\sum_{\substack{|\alpha|\le m \\
			|\beta| = \mu}} c_{\alpha \beta} x^\alpha \xi^\beta \neq 0, \; x,\xi \in \mathbb{R}^d, \xi \neq 0, \\
	[\sigma_e(P)](x,\xi)=[\sigma_e^{m}(P)](x,\xi)&=\sum_{\substack{|\alpha| = m \\
			|\beta|\le\mu}} c_{\alpha \beta} x^\alpha \xi^\beta \neq 0, \; x,\xi \in \mathbb{R}^d, x \neq 0,\\
	[\sigma_{\psi e}(P)](x,\xi) =[\sigma_{\psi e}^{\mu,m}(P)](x,\xi) &=\sum_{\substack{|\alpha|=m \\
			|\beta|=\mu}} c_{\alpha \beta} x^\alpha \xi^\beta \neq 0, \; x,\xi \in \mathbb{R}^d, x,\xi \neq 0.
\end{aligned}
\]
Observe that a positive, elliptic (actually, invertible), self-adjoint operator $P$ of the form \eqref{eq:opdiff}
is obtained by $\Pi_{m,\mu}$ choosing $m, \mu \in 2 \N\setminus\{0\}$. We also recall that the concept of principal homogeneous
symbol in three components $\sigma(P)=(\sigma_\psi(P), \sigma_{e}(P),\sigma_{\psi e}(P))$ extends
to the subclass $S_\mathrm{cl}^{m,\mu}$ of SG-classical operators, which includes the differential operators of the
form \eqref{eq:opdiff}, see again \cite[Ch. 3]{NR} and, e.g., \cite{BC11,CD21,ManPan}.

We conclude this section recalling the proof of Lemma \ref{lemma:h-s}.
\begin{proof}[Proof of Lemma \ref{lemma:h-s}]
	Denote by $K_{M}(x,y)$ the Schwartz kernel of the operator $P^M$ and by $p_{M}(x,\xi)$ its symbol. Then,
	\begin{equation}\label{eq:kerPM}
		\begin{aligned}
		K_{M}(x,y) = b(x,x-y) &= \mathfrak F^{-1}_{\cdot \to x-y}\left( p_M (x, \cdot)\right)
		\\
		&=(2 \pi)^{-d} \int e^{i(x-y) \, \xi} p_{M}(x,\xi) \, d \xi.
		\end{aligned}
	\end{equation}
	We need to show
	$$
	\iint \left|K_{M}(x,y) \right|^2 \, dx  dy < \infty.
	$$
	By the calculus, $p_M\in S^{Mm, M \mu}\Rightarrow p_M(x, \xi) \in L^2$, since 
	$$
	\iint |p_M(x, \xi)|^2 \, dx  d \xi \lesssim \iint \langle x \rangle^{2 Mm} \langle \xi \rangle^{2 M \mu} \, dx d \xi < \infty.
	$$
	By \eqref{eq:kerPM},
	$$
	\iint \left|K_{M}(x,y) \right|^2 \, dx dy =\iint \left|b(x,x-y) \right|^2 \, dy dx = \iint \left|b(x,z) \right|^2 \, dz dx  < \infty,
	$$
	as claimed, since $b(x,\cdot)\in L^2$ for any $x\in\R^d$, being the Fourier transform of $p_M(x,\cdot)\in L^2$.
\end{proof}

	\section{Eigenvalues of elliptic SG-operators}\label{sec:sgoprsev}
	\setcounter{equation}{0}
	
	\subsection{Eigenvalues asymptotics}\label{subs:sgasym}
	Consider a classical, positive, self-adjoint, elliptic SG-operator $P\in\Op(S_\mathrm{cl}^{m,\mu})$
	with order components $m,\mu>0$. In \cite{ManPan} it has been proved that, for $\lambda \to +\infty$,
	\begin{equation}\label{eq:sgspctasm}
	N(\lambda) \sim 
	\begin{cases}
		C_1 \ \lambda^{d/\min\{m,\mu\}}, \quad & m\neq \mu, \\
		C_2 \ \lambda^{d/m} \log\lambda,& m=\mu,
	\end{cases}
	\end{equation}
	where 
	$N(\lambda)= N_P(\lambda)=\left| \{ \lambda_j \leq \lambda : \lambda_j \text{ eigenvalue of } P \} \right|$ is the spectral counting function of the operator $P$,
	and the constants $C_1,C_2$ depend on the principal symbol of $P$.  Such results are analog to other ones,
	obtained for different classes of operators and/or systems globally defined on $\R^d$, see, for instance, 
	\cite{He84,HeRo81,MaPa24,Pa10,Pa14,PaWa01} and the references quoted therein.
	Further improvements to the associated Weyl formula for SG-operators, on manifolds with cylindrical ends and, more generally,
	on asymptotically Euclidean manifolds, have been subsequently proved in \cite{BC11, CorMan13, CD21}.
	
	By the asymptotic behaviour \eqref{eq:sgspctasm} of the counting function $N$ it is possibile to obtain, as usual, the asymptotic
	behaviour of the eigenvalues $\lambda_j$. Namely, for $j\to\infty$,
	\begin{equation}\label{eq:sgeigenasm}
		\lambda_j\sim
		\begin{cases}
		\widetilde{C}_1\,  j^{\min\{m,\mu\}/d}, \quad & m\neq \mu, \\
		\widetilde{C}_2 \left(\dfrac{j}{\log j}\right)^{m/d},& m=\mu.
		\end{cases}
	\end{equation}
	
	For the sake of completeness, we include here the proof of \eqref{eq:sgeigenasm} for the case $m=\mu$.
 	This asymptotic formula and the one valid for the case $m\not=\mu$ follows by the analogous arguments valid for other operator 
	classes (see, e.g., \cite[Proposition 13.1]{Shubin}). Let us first observe that the function $f(\lambda)=(\lambda^\frac{d}{m}\log\lambda)^{-1}$
	is strictly decreasing on the interval $I=(1,+\infty)$, since $f^\prime(\lambda)<0$ on $I$, as it is immediate to check. Then, its inverse function
	$f^{-1}$ is strictly decreasing and continuous as well. By definition of $N(\lambda)$, it follows that, for sufficiently large $j$,
	\[
		(1-\varepsilon)C_2j^{-1}\le f(\lambda_j)\le (1+\varepsilon)C_2j^{-1},
	\]
	which, by continuity, implies
	\begin{align*}
		f^{-1}(C_2j^{-1})-\widetilde{\varepsilon} &\le f^{-1}((1+\varepsilon)C_2j^{-1})
		\le\lambda_j\le
		\\
		&\le f^{-1}((1-\varepsilon)C_2j^{-1})\le f^{-1}(C_2j^{-1})+\widetilde{\varepsilon},
	\end{align*}
	that is, $\lambda_j\sim f^{-1}(C_2 j^{-1})$, $j\to\infty$. To conclude, it is enough to prove that, for $t\to0^+$,
	\begin{equation}\label{eq:finvasmpt}
		f^{-1}(t)\sim h(t)=\left(-\frac{m}{d}t\log t\right)^{-\frac{m}{d}}.
	\end{equation}
	In fact, \eqref{eq:finvasmpt} implies, for $j\to\infty$,
	\[
		\lambda_j\sim f^{-1}(C_2j^{-1})\sim\left(\frac{m}{d}C_2\right)^{-\frac{m}{d}} j^\frac{m}{d}
		(\log j-\log C_2)^{-\frac{m}{d}}\sim
		\widetilde{C}_2\left(\frac{j}{\log j}\right)^\frac{m}{d},
	\]
	as claimed. To prove \eqref{eq:finvasmpt}, we compute
	\begin{align*}
		\lim_{t\to0^+}\frac{f^{-1}(t)}{h(t)}&=\lim_{\lambda\to+\infty}\frac{f^{-1}(f(\lambda))}{h(f(\lambda))}
		\\
		&=\lim_{\lambda\to+\infty}
		\frac{\lambda}
		       {\left[
		       -\frac{m}{d}\left(\lambda^\frac{d}{m}\log\lambda\right)^{-1}
		       \log\left(\lambda^\frac{d}{m}\log\lambda\right)^{-1}
		       \right]^{-\frac{m}{d}}}
		\\
		&=\lim_{\lambda\to+\infty}
		\frac{\lambda}
		       {\lambda\left[
		       \frac{m}{d}(\log\lambda)^{-1}\left(\frac{d}{m}\log\lambda+\log\log\lambda\right)
		       \right]^{-\frac{m}{d}}}
		\\
		&=\lim_{\lambda\to+\infty}
		\left(1+\frac{m}{d}\frac{\log\log\lambda}{\log\lambda}\right)^\frac{m}{d}=1.
	\end{align*}
	As usual, it is also possible to prove that \eqref{eq:sgeigenasm} implies \eqref{eq:sgspctasm}. The details are left for the reader.
	
	\subsection{Subsequences of the eigenvalues sequence $\{\lambda_j\}_{j\in\N^\ast}$}\label{subs:evsubs}
	Here we provide, for the sake of completeness, a proof of the fact that, when Condition \condA\ fails, it is
	possibile to find a subsequence $\{\lambda_{j_k}\}_{k\in\N^\ast}\subseteq\{\lambda_j\}_{j\in\N^\ast}$ and 
	a sequence $\{\tau_k\}_{k\in\N^\ast}\subset\Z$ such that, for any $k\in\N^\ast$,
	\begin{equation}\label{eq:lambdasubseq}
		|\tau_k-\alpha\lambda_{j_k}|<C_kj_k^{-k},
	\end{equation}
	with a strictly decreasing sequence $\{C_k\}_{k\in\N^ast}\subset(0,1)$.
	Without loss of generality, in this section we assume that $\lambda_j>0$ for all $j\in\N^\ast$.
	
	In the (trivial) case $\alpha=0$, choose any sequence $\{C_k\}_{k\in\N^\ast}\subset(0,1)$ such that, for any $k\in\N^\ast$,
	$C_{k+1}<C_k$, and set $\delta_k=j_k=k$, $\tau_k=0$. Clearly, with such choices \eqref{eq:lambdasubseq} holds true.
	We consider then the case $\alpha>0$ (the case $\alpha<0$ is completely similar, and its details are left to the reader).
	We assume, for simplicity, that $\alpha\lambda_j\notin\Z$ for all $j\in\N^\ast$, that is, the set $\cZ$ in \eqref{eq:defZ} is empty.
	
	We first observe that, taking $\{C_k\}_{k\in\N^\ast}\subset(0,1)$, since $\alpha\lambda_j\ge0$ for any $j\in\N^\ast$, it follows
	that $\tau_k>-1\Rightarrow\tau_k\ge0$ for all $k\in\N^\ast$. Indeed,
	\[
		|\tau_k-\alpha\lambda_{j_k}|<C_kj_k^{-k}\Leftrightarrow
		\alpha\lambda_{j_k}-C_k j_k^{-k}<\tau_k<\alpha\lambda_{j_k}+C_k j_k^{-k},
	\]
	and
	\[
		\alpha\lambda_{j_k}-C_k j_k^{-k}\ge -C_k j_k^{-k} \ge -C_k > -1.
	\]
	Set $\delta_1=1$ and $C_1=\dfrac{1}{e}$. Then, since Condition \condA\ fails, there exist $\tau_1\in\Z_+$ and $j_1\in\N^\ast$ such that
	\[
		|\tau_1-\alpha\lambda_{j_1}|<C_1j_1^{-1} = e^{-1}j_1^{-1}.
	\] 
	Now, set $\delta_2=2$ and
	\begin{equation}\label{eq:defC2}
		C_2=\dfrac{1}{e}\min\{C_1,\{|m-\alpha\lambda_k|k^2\colon k=1,\dots,j_1, m=0,\dots,\tau_1+[\alpha\lambda_{j_1}]+1\}\},
	\end{equation}
	where we denoted by $[y]$ the integer part of $y\in\R$.
	Recalling that $\cZ$ is empty, that is, $m-\alpha\lambda_k\not=0$ for any $m\in\Z_+$ and $k\in\N^\ast$,
	it follows $0<C_2<C_1<1$, and, since Condition \condA\ fails, there exist $\tau_2\in\Z_+$ and $j_2\in\N^\ast$ such that
	\[
		|\tau_2-\alpha\lambda_{j_2}|<C_2j_2^{-2} < e^{-2}j_1^{-2}.
	\] 
	If it were $j_2\le j_1$, since the sequence $\{\lambda_j\}$ is non decreasing, we would have
	\begin{align*}
		|\tau_1-\tau_2|&\le|\tau_1-\alpha\lambda_{j_1}|+\alpha(\lambda_{j_1}-\lambda_{j_2})+|\alpha\lambda_{j_2}-\tau_2|
		<C_1j_1^{-1}+\alpha\lambda_{j_1}+C_2j_2^{-2}
		\\
		&< C_1+[\alpha\lambda_{j_1}]+1+\frac{C_1}{e}<[\alpha\lambda_{j_1}]+2,
	\end{align*}
	that is, $\tau_2\in\Z\cap[0,\tau_1+[\alpha\lambda_{j_1}]+1]=:A_2$. Then, by \eqref{eq:defC2}, having
	assumed $j_2\le j_1$, for any $m\in A_2$ we have $C_2<|m-\alpha\lambda_{j_2}|j_2^2$, and $\tau_2\in A_2$ implies
	\[
		C_2 < |\tau_2-\alpha\lambda_{j_2}|j_2^2<C_2,
	\]
	which is a contradiction. Then, it must necessarily be $j_2>j_1$. 
	
	\smallskip
	
	\noindent
	Now, set $\delta_3=3$ and
	\begin{equation}\label{eq:defC3}
		C_3=\frac{1}{e}\min\{C_2,\{|m-\alpha\lambda_k|k^3\colon k=1,\dots,j_2, m=0,\dots,\tau_2+[\alpha\lambda_{j_2}]+1\}\}.
	\end{equation}
	As above,	it follows $0<C_3<C_2<C_1<1$, and, since Condition \condA\ fails, there exist $\tau_3\in\Z_+$ and $j_3\in\N^\ast$ such that
	\[
		|\tau_3-\alpha\lambda_{j_3}|<C_3j_3^{-3} < e^{-3}j_3^{-3}.
	\] 
	If it were $j_3\le j_2$, we would have
	\[
		|\tau_2-\tau_3|<[\alpha\lambda_{j_2}]+2,
	\]
	that is, $\tau_3\in\Z\cap[0,\tau_2+[\alpha\lambda_{j_2}]+1]=:A_3$. Then, by \eqref{eq:defC3}, having
	assumed $j_3\le j_2$, for any $m\in A_3$ we have $C_3<|m-\alpha\lambda_{j_3}|j_3^3$, and $\tau_3\in A_3$ implies
	\[
		C_3 < |\tau_3-\alpha\lambda_{j_3}|j_3^3<C_3,
	\]
	which is a contradiction. Then, it must necessarily be $j_3>j_2$. Iterating the above argument, by induction
	we obtain the subsequence
	$\{\lambda_{j_k}\}_{k\in\N^\ast}$ and the sequences $\{C_k\}_{k\in\N^\ast}\subset(0,1)$ and
	$\{\tau_k\}_{k\in\N^\ast}\subset\Z_+\subset\Z$ with the requested properties.  
	
	Notice that, by construction, $\tau_k-\alpha\lambda_{j_k}\to0$ for $k\to\infty$. Then, for a suitable $k_0\in\N^\star$,
	\[
		\tau_k>\alpha(\lambda_{j_k}-1), \quad k>k_0,
	\]
	giving $\tau_k\to\infty$, $k\to\infty$. By a standard argument, passing to subsequences,
	$\{\tau_k\}_{k\in\N^\ast}$ can be made strictly increasing.

\bibliographystyle{plain} %

\end{document}